\newtheorem{thm}{Theorem}[section]
\newtheorem{cor}[thm]{Corollary}
\newtheorem{lem}[thm]{Lemma}
\newtheorem{prop}[thm]{Proposition}
\theoremstyle{definition}
\newtheorem{defn}[thm]{Definition}
\theoremstyle{remark}
\newtheorem{rem}[thm]{Remark}
\theoremstyle{conclusion}
\numberwithin{equation}{section}
\begin{document}
\title[ Continuous Dependence of NLS]
{ Continuous Dependence of Cauchy Problem For Nonlinear
Schr\"{o}dinger Equation in $H^{s}$ }

\author{Wei Dai,  Weihua Yang and Daomin Cao }

\address{Institute of Applied Mathematics, AMSS, Chinese Academy of Sciences, Beijing 100190, P. R. China}
\email{daiwei@amss.ac.cn}

\address{Department of Mathematics and Natural Science, Beijing University of Technology,
 Beijing 100124, P. R. China}
\email{whyang@bjut.edu.cn}

\address{Institute of Applied Mathematics, AMSS, Chinese Academy of Sciences, Beijing 100190, P. R. China}
\email{dmcao@amt.ac.cn}

\begin{abstract}
We consider the Cauchy problem for the nonlinear Schr\"{o}dinger
equation $i \partial_{t}u+ \Delta u=\lambda_{0}u+\lambda_{1}|u|^\alpha
u$ in $\mathbb{R}^{N}$, where $\lambda_{0},\lambda_{1}\in\mathbb{C}$, in $H^s$ subcritical and critical case: $0<\alpha\leq\frac{4}{N-2s}$ when $1<s<\frac{N}{2}$ and $0<\alpha<+\infty$ when $s\geq\frac{N}{2}$. We show that the solution depends continuously on the initial value in the standard sense in $H^{s}(\mathbb{R}^{N})$ if $\alpha$ satisfies certain assumptions.
\end{abstract}
\maketitle {\small {\bf Keywords:} Nonlinear Schr\"{o}dinger equation; Continuous dependence; Cauchy problem.\\

{\bf 2000 MSC} Primary: 35Q55; Secondary: 35B30, 46E35.}

\section{INTRODUCTION}

In this paper we are concerned with the following Cauchy problem for the nonlinear Schr\"{o}dinger equation\\
\begin{equation}\label{eq1}
    \left\{
  \begin{array}{ll}
    i \partial_{t}u+ \Delta u=g(u), \\
    u(0,x)=\phi(x),
  \end{array}
\right.
\end{equation}
where $\phi\in H^s(\mathbb{R}^{N})$, $N\geq1$ and $s\geq \min\{1,N/2\}$. We assume that the nonlinearity $g=g_{0}+g_{1}$, where $g_{0}(u)=\lambda_{0}u$ with $\lambda_{0}\in\mathbb{C}$ and the nonlinear term $g_{1}$ is $H^{s}$-subcritical or critical and of class $\mathcal{C}(\alpha,s)$ for some $0<\alpha<\infty$.

\begin{defn}\label{class}
Let $f:\mathbb{C}\rightarrow\mathbb{C}$, $0<\alpha<\infty$, $[s]$ denotes the largest integer which is less than $s$ and $k$-th order complex partial derivative $D^{k}$ with $k\in\mathbb{N}$ be defined under the identification $\mathbb{C}=\mathbb{R}^{2}$(see Section 2). When $0<s\leq N/2$, we say that $f$ is of class $\mathcal{C}(\alpha,s)$ provided it satisfies both the two assumptions: \\
(i) \,$f\in C^{[s]+1}(\mathbb{C},\mathbb{C})$ with $f(0)=0$; \\
(ii) if $f(z)$ is a polynomial in $z$ and $\bar{z}$, then $1<deg(f)=1+\alpha\leq 1+\frac{4}{N-2s}$($1<deg(f)=1+\alpha<\infty$, if $s=N/2$); if $f$ is not a polynomial, then
\begin{equation}\label{class1}
|D^{k}f(u)|\leq C|u|^{\alpha+1-k}
\end{equation}
for any $0\leq k\leq[s]+1$ and $u\in\mathbb{C}$, and
\begin{equation}\label{class2}
|D^{[s]+1}(f(u)-f(v))|\leq C|u-v|^{\min(\alpha-[s],1)}(|u|+|v|)^{\max(0,\alpha-1-[s])}
\end{equation}
for any $u,v\in\mathbb{C}$, where $[s]<\alpha\leq\frac{4}{N-2s}$($[s]<\alpha<\infty$, if $s=N/2$).
When $s>N/2$, we say that $f$ is of class $\mathcal{C}(\alpha,s)$ if $f\in C^{[s]+1}(\mathbb{C},\mathbb{C})$ with $f(0)=0$.
\end{defn}

\begin{rem}\label{model}
Assume that if $s<N/2$, $0<\alpha\leq\frac{4}{N-2s}$ and if $s\geq N/2$, $0<\alpha<\infty$. If $\alpha$ is not an even integer, assume further that $\alpha>[s]$. One easily verifies that the nonlinearity $g(u)=\lambda|u|^{\alpha}u$ with $\lambda\in\mathbb{C}$ is a model case of class $\mathcal{C}(\alpha,s)$(see \cite{G,N1,N2}).
\end{rem}

The Cauchy problem (\ref{eq1}) in the Sobolev space $H^s(\mathbb{R}^{N})$($s\geq0$) has been quite extensively studied (see \cite{b2,b5,b6,F1,b3,b1,b7,Ponce4,b17,P,b4}). For $s=0,1,2$, the local well-posedness for Cauchy problem (\ref{eq1}) with local nonlinearity $g(u)$ has been studied(see \cite{b2,b6,b1,b4}). For general $0<s<N/2$, local well-posedness results for (\ref{eq1}) were established when $g(u)=\lambda|u|^{\alpha}u$, where $\lambda\in\mathbb{C}$ and $\alpha>0$ satisfying certain regularity assumptions, and the following result is well known now(see theorem 1.1 in \cite{b5}, see also theorem 4.9.9 and theorem 4.9.10 in \cite{b6}).
\begin{thm}\label{thm0}(\cite{b5,b6})
Assume $N\geq 3$, $1<s<\frac{N}{2}$. Let $g(u)=\lambda |u|^\alpha
u$ with $\lambda \in \mathbb{C }$ and
\begin{equation}\label{g1'}
0\leq\alpha\leq\frac{4}{N-2s}.
\end{equation}
If $\alpha$ is not an even integer, suppose further that
 \begin{equation}\label{eq6}
 \alpha>[s],
 \end{equation}
where $[s]$ denotes the largest integer which is less than $s$($[s]=s-1$ if $s$ is an integer).
Then for any given $\phi\in H^{s}(\mathbb{R}^{N})$, there exist
$T_{max},T_{min}\in (0,\infty]$ and a unique, maximal solution $u\in
C((-T_{min}, T_{max}), H^{s}(\mathbb{R}^{N}))$ of Cauchy problem
$(\ref{eq1})$. Moreover, the following properties hold:

$(i)$ $u\in L^{q}_{loc}((-T_{min}(\phi),T_{max}(\phi)),B^{s}_{r,2}(\mathbb{R}^{N}))$
for every admissible pair $(q,r)$.

$(ii)$ $u$ depends continuously on $\phi$ in the following sense. There exists $0<T<T_{max}(\phi),T_{min}(\phi)$ such
that if $\phi_n\rightarrow \phi$ in $H^s$ and if $u_n$ denote the solution of $(\ref{eq1})$ with the initial value $\phi_n$, then $0<T<T_{max}(\phi_n),T_{min}(\phi_n)$ for all sufficiently large $n$ and $u_n$ is bounded in $L^{q}((-T,T),B^{s}_{r,2}(\mathbb{R}^{N}))$ for every admissible pair $(q,r)$. Moreover, $u_n\rightarrow u$ in
$L^{q}((-T,T),B^{s-\varepsilon}_{r,2}(\mathbb{R}^{N}))$ for all $\varepsilon>0$ and admissible pair $(q,r)$ as $n\to\infty$. In particular, $u_n\rightarrow u$ in $C([-T,T],H^{s-\varepsilon}(\mathbb{R}^{N}))$ for all $\varepsilon>0$.
\end{thm}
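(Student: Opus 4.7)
The plan is to recast (\ref{eq1}) as the Duhamel integral equation
\[
u(t)=S(t)\phi-i\int_{0}^{t}S(t-\tau)g(u(\tau))\,d\tau,
\]
where $S(t)=e^{it\Delta}$ is the free Schr\"odinger group, and to solve it by a fixed-point argument in a complete metric space built out of Besov-valued Strichartz spaces. First I would select an $H^{s}$-admissible pair $(q,r)$ tuned to the nonlinearity $|u|^\alpha u$ so that the Sobolev embedding $B^{s}_{r,2}\hookrightarrow L^{\alpha+2}$ holds and the scaling balance leaves either a H\"older-in-time gain (in the subcritical case $\alpha<4/(N-2s)$) or room for a smallness argument on $\|S(\cdot)\phi\|_{L^q_t B^s_{r,2}}$ (in the critical case $\alpha=4/(N-2s)$). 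The homogeneous and dual Strichartz estimates in Besov form,
\[
\|S(\cdot)\phi\|_{L^{q}_{T}B^{s}_{r,2}}\lesssim\|\phi\|_{H^{s}},\qquad \Big\|\int_{0}^{t}S(t-\tau)F\,d\tau\Big\|_{L^{q}_{T}B^{s}_{r,2}}\lesssim\|F\|_{L^{q'}_{T}B^{s}_{r',2}},
\]
then reduce local existence to a nonlinear estimate for $g$ on $B^{s}_{r,2}$.

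The main analytic input is a fractional Moser-type bound of the shape
\[
\|g(u)\|_{B^{s}_{r',2}}\lesssim \|u\|_{L^{\alpha+2}}^{\alpha}\,\|u\|_{B^{s}_{r,2}},
\]
proved by Littlewood--Paley / paraproduct decomposition together with the chain rule on the highest-order piece. This is precisely the place where the regularity hypotheses (\ref{class1})--(\ref{class2}), and in particular $\alpha>[s]$ when $\alpha$ is not an even integer, are used, since only $[s]+1$ derivatives of $g$ exist and the top derivative is only H\"older continuous of exponent $\min(\alpha-[s],1)$. Combined with the Strichartz estimates above, a standard computation shows that, for $T$ small enough in the subcritical case or $\|\phi\|_{H^{s}}$ small enough in the critical one, the Duhamel map sends a suitable ball $B\subset L^{q}([-T,T];B^{s}_{r,2})\cap C([-T,T];H^{s})$ into itself.

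The main obstacle is upgrading boundedness of the Duhamel map to a contraction. Since $g$ has only $[s]+1$ derivatives, one cannot estimate $g(u)-g(v)$ in the full $B^{s}_{r,2}$-norm without losing regularity. The Kato--Cazenave--Weissler device is to endow the ball $B$ with the \emph{weaker} distance $d(u,v)=\|u-v\|_{L^{q}_{T}L^{r}_{x}}$; one checks that $B$ is still complete for $d$ (via weak-$\ast$ compactness and lower semicontinuity of the strong norm), and that on $B$ the Duhamel map is a contraction for $d$, using only the pointwise bound $|g(u)-g(v)|\lesssim (|u|+|v|)^{\alpha}|u-v|$ and H\"older. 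This produces a unique fixed point in $B$; a standard blow-up alternative and bootstrap (feeding the fixed point back into the Strichartz estimates for every admissible $(q,r)$) yield the maximal solution and property $(i)$.

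For the continuous dependence claim $(ii)$, I would run the same fixed-point scheme with $\phi$ replaced by $\phi_{n}$. Because $\|\phi_{n}\|_{H^{s}}$ stays bounded and $\|S(\cdot)\phi_{n}\|_{L^{q}_{T}B^{s}_{r,2}}\to\|S(\cdot)\phi\|_{L^{q}_{T}B^{s}_{r,2}}$, the existence time $T$ can be chosen uniformly in $n$, and the $u_{n}$ are uniformly bounded in $L^{q}([-T,T];B^{s}_{r,2})$ for every admissible $(q,r)$. The weak-metric contraction applied to $u_{n}-u$ then delivers $u_{n}\to u$ in $L^{q}([-T,T];L^{r})$. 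Interpolating this convergence against the uniform bound in $L^{q}([-T,T];B^{s}_{r,2})$ gives $u_{n}\to u$ in $L^{q}([-T,T];B^{s-\varepsilon}_{r,2})$ for every $\varepsilon>0$. Finally, applying the inhomogeneous Strichartz inequality in the $H^{s-\varepsilon}$-scale to the equation satisfied by $u_{n}-u$ transfers this to $C([-T,T];H^{s-\varepsilon})$, which is the claimed continuous dependence.
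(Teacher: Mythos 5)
Your proposal is essentially the standard Cazenave--Weissler/Kato argument that the paper itself relies on for Theorem \ref{thm0} (it cites \cite{b5,b6,b7} rather than reproving it, and its introduction describes exactly this scheme: Duhamel's formula, Strichartz estimates in Besov spaces, a fixed point in a ball of $L^{q}((-T,T),B^{s}_{r,2})$ endowed with the weaker distance $\|u-v\|_{L^{q}(I,L^{r})}$, and interpolation to obtain the convergence with an $\varepsilon$-loss). One small correction: in the critical case $\alpha=\frac{4}{N-2s}$ the smallness needed is that of $\|e^{i\cdot\Delta}\phi\|_{L^{\gamma}((-T,T),B^{s}_{\rho,2})}$, achievable for \emph{arbitrary} data by shrinking $T$, not smallness of $\|\phi\|_{H^{s}}$ --- as you in fact state correctly earlier in your write-up.
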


\begin{rem} \label{rem12}
In general, for $0<s<\infty$ and nonlinearity $g$ of class $\mathcal{C}(\alpha,s)$, the local well-posedness for Cauchy problem $(\ref{eq1})$ was also established by T. Kato in \cite{b7}, and the solution $u$ of Cauchy problem $(\ref{eq1})$ depends continuously on initial value $\phi$ in the sense of (ii) of Theorem \ref{thm0}(see \cite{b7}).
\end{rem}

Let $(e^{it\Delta})_{t\in \mathbb{R}}$ be the Schr\"{o}dinger group. The existence of solutions in Theorem \ref{thm0} is established by using fixed point theorem to the equivalent integral equation(Duhamel's formula)
\begin{equation}\label{eq2}
    u(t)=e^{it\Delta}\phi-i\int^{t}_{0}e^{i(t-\tau)\Delta}g(u(\tau)) d\tau
\end{equation}
in an appropriate space. So one would expect that the dependence of the solution on the initial value is locally Lipschitz in spaces of order $s$ differentiability. However, the metric space in which one applies Banach's fixed point theorem involves Sobolev (or Besov) norms of order s, while the distance only involves Lebesgue norms(The reason for that choice of the distance is that the nonlinearity need not be locally Lipschitz for Sobolev or Besov norms of positive order). Thus the flow is locally Lipschitz for Lebesgue norms, and continuous dependence in the sense of Theorem \ref{thm0} (ii) follows by interpolation inequalities(see \cite{b5,b7}).

As for the statement of continuous dependence, it was pointed out in \cite{b6} that the above results are weaker than what would be ``standard", that is, $\varepsilon=0$(see remark 4.9.6 for subcritical case and theorem 4.9.10 for critical case in \cite{b6}, see also \cite{b5,b7}). We wonder if $\varepsilon=0$ is allowed, that is, if the solution depends continuously on the initial data in the sense that the local solution flow is continuous $H^{s}(\mathbb{R}^{N})\rightarrow H^{s}(\mathbb{R}^{N})$, it is not known until very recently. A positive answer was given recently by Cazenave, Fang and Han in \cite{CFZ}, where under the condition that $0<s<\min\{1,\,\frac{N}{2}\}$, the continuous dependence in $H^s$ in the standard sense was obtained.

In addition, when $s=0,1,2$(subcritical case), or $s=m$, $m>N/2$ is an integer(with nonlinearity $g(u)$ satisfying $g\in C^{m}(\mathbb{C}, \mathbb{C}$) in the real sense and $g(0)=0$), the standard continuous dependence in $H^{s}(\mathbb{R}^{N})$ has been proved(see \cite{b5,b6,K-M,b1,Tao2}).

In this paper, we  mainly consider nonlinearity $g(u)=\lambda_{0}u+g_{1}(u)$ with $g_{1}\in\mathcal{C}(\alpha,s)$(specially, a typical form $g(u)=\lambda_{0}u+\lambda_{1}|u|^{\alpha}u$) and address the question of $H^s(\mathbb{R}^{N})$ continuous dependence for non-integer and integer $s\geq\min\{1,\,\frac{N}{2}\}$. We show that continuous dependence holds in $H^{s}(\mathbb{R}^{N})$ in the standard sense in virtually all the cases where local existence of solution to (\ref{eq1}) is known. Under more restrictive conditions, we show that the dependence is locally Lipschitz(see Theorem \ref{th1} and Corollary \ref{cor1}).

To state our result and explain our strategy of its proof let us give some of the notation to be used in this paper first. We use the notation $[x]$ to denote the largest integer which is less than $x$, and the remainder part of $x$ is denoted by $\{x\}$, namely, $0<\{x\}:=x-[x]\leq 1$, in particular, if $x$ is an integer, then $[x]=x-1$ and $\{x\}=1$. We denote by $p'$ the conjugate of the exponent $p\in[1,\infty]$ defined by $\frac{1}{p}+\frac{1}{p'}=1$. We will use the usual notation for various complex-valued function spaces: Lebesgue space $L^{r}=L^{r}(\mathbb{R}^{N})$, Sobolev spaces $H^{s,r}=H^{s,r}(\mathbb{R}^{N}):=(I-\Delta)^{-s/2}L^{r}$, homogeneous Sobolev spaces $\dot{H}^{s,r}=\dot{H}^{s,r}(\mathbb{R}^{N}):=(-\Delta)^{-s/2}L^{r}$, Besov spaces $B^{s}_{r,b}=B^{s}_{r,b}(\mathbb{R}^{N})$ and homogeneous Besov spaces $\dot{B}^{s}_{r,b}=\dot{B}^{s}_{r,b}(\mathbb{R}^{N})$. For the definitions of these spaces and the corresponding interpolation and embedding properties, refer to \cite{b8,b9,b10,b11}. For any interval $I\subset\mathbb{R}$ and any Banach space $X$ just mentioned, we denote by $C(I,X)$ the space of strongly continuous functions from $I$ to $X$ and by $L^{q}(I,X)$ the space of measurable functions $u$ from $I$ to $X$ such that $\|u(\cdot)\|_{X}\in L^{q}(I)$. As usual, we define the ``admissible pair" as below, which plays an important role in our space-time estimates.

\begin{defn}
We say that a pair $(q,r)$ is admissible if
\begin{equation}\label{eq3}
    \frac{2}{q}=\delta(r)=N(\frac{1}{2}-\frac{1}{r})
\end{equation}
and $2\leq r\leq\frac{2N}{N-2}$ ($2\leq r\leq\infty$ if $N=1$, $2\leq r<\infty$ if $N=2$).\\
Note that if $(q,r)$ is an admissible pair, then $2\leq
q\leq\infty$, the pair $(\infty,2)$ is always admissible, and the
pair $(2,\frac{2N}{N-2})$ is admissible if $N\geq3$.
\end{defn}

In this paper we always assume the following condition for $\alpha$ and $s$,
\begin{equation}\label{eq4}
\left\{
\begin{array}{ll}
0<\alpha\leq \frac{4}{N-2s}, &  if\,\, 0\leq s<N/2,  \\
0<\alpha<\infty, &  if\,\, s\geq N/2.\\
\end{array}
\right.
\end{equation}
Moreover, since we are working in spaces of order $s$ differentiability, we need the nonlinear map $u\mapsto |u|^{\alpha}u$ to have certain regularity, which will sometimes be expressed by the condition given by \eqref{eq6}. \\

The main result in this paper is the following.

\begin{thm}\label{th1}
Assume $N\geq1$, $s>1$ or $s\geq N/2$. Let $g(u)=g_{0}(u)+g_{1}(u)$, where $g_{0}(u)=\lambda_{0}u$ with $\lambda_{0}\in\mathbb{C}$ and $g_{1}$ is of class $\mathcal{C}(\alpha,s)$. Then for any given $\phi\in
H^{s}(\mathbb{R}^{N})$, the corresponding solution $u$ of Cauchy problem $(\ref{eq1})$ obtained in \cite{b5,b7} depends continuously on the initial value $\phi$ in the following sense: \\
$(i)$ the mappings $\phi\mapsto T_{min}(\phi),T_{max}(\phi)$ are lower semi-continuous $H^{s}(\mathbb{R}^{N})\rightarrow (0,\infty]$, \\
$(ii)$ for every interval $[-S,T]\subset (-T_{min}(\phi),T_{max}(\phi))$, and every admissible pair $(q,r)$, if $\phi_{n}\rightarrow \phi$ in $H^{s}(\mathbb{R}^{N})$ and if $u_{n}$ denotes the solution of (\ref{eq1}) with the initial value $\phi_{n}$, then $u_{n}\rightarrow u$ in $L^{q}((-S,T),B^{s}_{r,2}(\mathbb{R}^{N}))$ as $n\rightarrow\infty$. In particular, $u_{n}\rightarrow u$ in $C([-S,T],H^{s}(\mathbb{R}^{N}))$.

In addition, for $s\leq N/2$, if $g_{1}(u)$ is a polynomial in $u$ and $\bar{u}$, or if $g_{1}$ is not a polynomial, we assume further that $\alpha\geq [s]+1$; or for $s>N/2$, if $g_{1}\in C^{[s]+2}(\mathbb{C},\mathbb{C})$; then the dependence is locally Lipschitz.
\end{thm}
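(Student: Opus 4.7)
The plan is to combine the Kato local existence theory of \cite{b5,b7}, which already yields the weakened convergence $u_n\to u$ in $L^q((-S,T),B^{s-\varepsilon}_{r,2})$ for every $\varepsilon>0$, with a Strichartz-based bootstrap carried out at the full $B^{s}_{r,2}$ level. For part $(i)$, fix a compact $[-S,T]\subset(-T_{min}(\phi),T_{max}(\phi))$ and partition it into finitely many subintervals on which the fixed-point argument of \cite{b5,b7} runs with data of size comparable to $\|u(t_j)\|_{H^s}$. An induction over these subintervals, using Kato's continuous dependence to propagate the approximation $u_n(t_j)\to u(t_j)$ in $H^{s-\varepsilon}$ and the uniform $H^s$ bound from one step to the next, produces both $T_{min}(\phi_n),T_{max}(\phi_n)>S,T$ for $n$ large and a uniform Strichartz bound
\[
\sup_n\|u_n\|_{L^q((-S,T),B^s_{r,2})}\leq M
\]
for every admissible pair $(q,r)$.

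For part $(ii)$, set $w_n=u_n-u$. Subtracting Duhamel formulas and applying the Strichartz estimate at the $B^s_{r,2}$ level gives, for admissible pairs $(q,r)$ and $(\tilde q,\tilde r)$,
\[
\|w_n\|_{L^q((-S,T),B^s_{r,2})}\leq C\|\phi_n-\phi\|_{H^s}+C\|g(u_n)-g(u)\|_{L^{\tilde q'}((-S,T),B^s_{\tilde r',2})},
\]
so the whole task reduces to controlling the nonlinear difference. Using
\[
g(u_n)-g(u)=w_n\int_0^1\partial_z g(u+\theta w_n)\,d\theta+\overline{w_n}\int_0^1\partial_{\bar z}g(u+\theta w_n)\,d\theta
\]
together with fractional product and chain rules in Besov spaces, the Lipschitz part of the theorem follows directly from the hypothesis $g_1\in C^{[s]+2}$ (for $s>N/2$) or $\alpha\geq[s]+1$ (for $s\leq N/2$): under those conditions the $\theta$-integrands have $[s]+1$ bounded derivatives, so the nonlinear term is bounded by $\|w_n\|_{L^q((-S,T),B^s_{r,2})}$ times uniformly controlled Strichartz norms of $u_n,u$, and one closes on short subintervals and iterates across $[-S,T]$.

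In the general case, the Hölder bound \eqref{class2} at the top derivative forces a decomposition of $g(u_n)-g(u)$ into a Lipschitz-type piece involving only derivatives of $g$ up to order $[s]$, and a top-order residual of the form $|w_n|^{\min(\alpha-[s],1)}(|u_n|+|u|)^{\max(0,\alpha-1-[s])}$. The Lipschitz-type piece is controlled by product estimates together with the Kato-level convergence of $w_n$ in $L^q((-S,T),B^{s-\varepsilon}_{r,2})$. For the residual, Hölder's inequality bounds its $L^{\tilde q'}_tL^{\tilde r'}_x$ norm by a product of $\|w_n\|^{\mu}_{L^{q_1}_tL^{r_1}_x}$ and $(\|u_n\|_{L^{q_2}_tL^{r_2}_x}+\|u\|_{L^{q_2}_tL^{r_2}_x})^{\nu}$, with $\mu=\min(\alpha-[s],1)>0$ and $\nu=\max(0,\alpha-1-[s])$; interpolating the first factor between a norm in which $w_n\to 0$ (supplied by Kato) and one uniformly controlled by $M$ yields a quantity tending to $0$. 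Absorbing the linear coefficient of $\|w_n\|$ on small subintervals and chaining gives $\|w_n\|_{L^q((-S,T),B^s_{r,2})}\to 0$, and choosing $(q,r)=(\infty,2)$ in particular delivers $u_n\to u$ in $C([-S,T],H^s)$.

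The main obstacle I expect is this residual estimate: one must select $(q_1,r_1),(q_2,r_2)$ so that Hölder places the product at a dual Strichartz norm while the $w_n$ factor retains strictly positive power $\mu$ in a norm where Kato convergence has been established, and this has to work for every admissible target pair and for both subcritical and critical $\alpha$. In the critical case $\alpha=\frac{4}{N-2s}$ the exponents are forced to the scaling endpoint and no margin remains, which is precisely the point at which the method of Cazenave–Fang–Han \cite{CFZ} must be extended beyond the regime $0<s<\min\{1,N/2\}$ to the range $s>1$ or $s\geq N/2$ considered here.
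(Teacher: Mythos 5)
Your overall architecture (Strichartz on the difference equation, splitting $g_1(u_n)-g_1(u)$ into a Lipschitz part to be absorbed plus a top-order H\"older residual, then iterating over compact subintervals) is the same as the paper's, and your treatment of the Lipschitz regimes ($\alpha\geq[s]+1$, resp. $g_1\in C^{[s]+2}$) is essentially what the paper does. The genuine gap is at the one step that carries all the difficulty: your claim that the residual is handled by ``interpolating the first factor between a norm in which $w_n\to0$ (supplied by Kato) and one uniformly controlled by $M$.'' The exponent bookkeeping forces the factor $\|w_n\|^{\alpha-[s]}$ into the endpoint space $L^{\gamma}_tL^{\rho^{*}_{0}}_x$, where $\frac{1}{\rho^{*}_{0}}=\frac{1}{\rho}-\frac{s}{N}$ (this is exactly what makes $\frac{1}{\rho'}=\frac{\alpha}{\rho^{*}_{0}}+\frac{1}{\rho}$ close against the dual Strichartz norm), and that endpoint is \emph{not} reachable by interpolation: Kato's theory gives convergence only in $B^{s-\varepsilon}_{r,2}$-based norms, i.e. in $L^{p}_x$ with $p<\rho^{*}_{0}$, while no bound in any $L^{q}_x$ with $q>\rho^{*}_{0}$ is available (that would be supercritical), so no interpolation between a convergent norm and a bounded norm can produce convergence at $p=\rho^{*}_{0}$. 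This is precisely where the paper injects new ingredients: Lemma \ref{lem6} (and, in the critical case, Claims \ref{claim1} and \ref{claim2}) upgrade the convergence to $L^{\gamma}(I,H^{[s],\rho^{*}_{[s]}})$ via Strichartz estimates for \emph{non-admissible} pairs -- and, for integer $s$ at criticality ($s=2$, $N=7$, $\alpha=4/3$), via the Tao--Visan exotic norms $U$, $V$, $\dot W$ and a frequency-localized nonlinear estimate -- and this route only covers $\alpha>\frac{4\{s\}}{N-2s}$; the remaining range $[s]<\alpha\leq\frac{4\{s\}}{N-2s}$ is handled instead by Kato's dominated-convergence contradiction argument applied to $K(u_n,u)$, which yields convergence of the remainder without any rate. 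Your proposal contains neither ingredient, and for $\alpha=\frac{4}{N-2s}$ you explicitly note that ``no margin remains'' without offering a substitute, whereas the paper resolves it by running the fixed point on a small-$\delta$ ball, proving the two Claims, and then propagating to the whole existence interval with a compactness ($\varepsilon$-net) argument rather than the naive iteration.

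A secondary problem: your residual $|w_n|^{\min(\alpha-[s],1)}(|u_n|+|u|)^{\max(0,\alpha-1-[s])}$ presupposes \eqref{class2}, which in Definition \ref{class} is only assumed when $s\leq N/2$. For $s>N/2$ the class $\mathcal{C}(\alpha,s)$ requires only $g_1\in C^{[s]+1}$ with $g_1(0)=0$, so your decomposition is unavailable there; the paper argues differently, using $H^{s}\hookrightarrow L^{\infty}$, the difference characterization of the norm (Lemma \ref{alternative}), and uniform continuity of $D^{[s]+1}g_1$ on bounded sets, which produces an error $\varepsilon_M(\|u_n-u\|_{L^{\infty}})$ rather than a H\"older power, while the Lipschitz statement for $g_1\in C^{[s]+2}$ uses the Schauder estimate (Lemma \ref{Schauder}). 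Likewise the borderline $s=N/2$ needs its own exponent scheme based on $H^{N/2}\hookrightarrow L^{p}$ for all $p<\infty$. So for $s\geq N/2$ your argument must be replaced, not merely adapted.
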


By Remark \ref{model}, Theorem \ref{th1} applies in particular to the model case $g(u)=\lambda_{0}u+\lambda_{1}|u|^{\alpha}u$ with $\lambda_{0},\lambda_{1}\in\mathbb{C}$ and $\alpha>0$. We summarize the corresponding results in the following corollary.

\begin{cor}\label{cor1}
Assume $N\geq1$, $s>1$ or $s\geq N/2$. Let $g(u)=\lambda_{0}u+\lambda_{1}|u|^{\alpha}u$, where $\lambda_{0},\lambda_{1}\in\mathbb{C}$. Suppose that $\alpha>0$ satisfies $(\ref{eq4})$, and suppose further that $(\ref{eq6})$ holds if $\alpha$ is not an even integer. Then for any given $\phi\in H^{s}(\mathbb{R}^{N})$, the corresponding solution $u$ of Cauchy problem $(\ref{eq1})$ obtained in \cite{b5,b7} depends continuously on the initial value $\phi$ in the sense of Theorem \ref{th1}. In addition, if $\alpha$ is an even integer, or if $\alpha$ is not an even integer, we assume further that $\alpha\geq [s]+1$, then the dependence is locally Lipschitz.
\end{cor}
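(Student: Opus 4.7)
The plan is to reduce Corollary \ref{cor1} to Theorem \ref{th1} by verifying that the model nonlinearity $g_{1}(u)=\lambda_{1}|u|^{\alpha}u$ satisfies the required structural assumptions of class $\mathcal{C}(\alpha,s)$. First, I would invoke Remark \ref{model}: under hypothesis \eqref{eq4} on $(\alpha,s)$, together with the extra assumption $\alpha>[s]$ when $\alpha$ is not an even integer, the map $g_{1}(u)=\lambda_{1}|u|^{\alpha}u$ is of class $\mathcal{C}(\alpha,s)$. This is a direct calculation: view $g_{1}$ as a smooth function of $(\mathrm{Re}\,u,\mathrm{Im}\,u)\in\mathbb{R}^{2}$ away from the origin, differentiate, and verify the pointwise bounds \eqref{class1} together with the H\"older-type bound \eqref{class2} on the top-order derivative; the even-integer case reduces to a polynomial in $u,\bar{u}$ of degree $1+\alpha$.

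With class membership in hand, Theorem \ref{th1} applied to $g(u)=\lambda_{0}u+g_{1}(u)$ immediately gives both the lower semi-continuity of the maximal existence times and the convergence $u_{n}\to u$ in $L^{q}((-S,T),B^{s}_{r,2}(\mathbb{R}^{N}))$ for every admissible pair $(q,r)$. This is exactly parts (i)--(ii) of the corollary, so the continuous dependence statement follows with no additional work.

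Second, I would handle the locally Lipschitz addendum by checking which of the three regularity alternatives in the Lipschitz part of Theorem \ref{th1} applies. If $\alpha$ is an even integer, then $|u|^{\alpha}=(u\bar{u})^{\alpha/2}$, so $g_{1}(u)$ is a genuine polynomial in $u$ and $\bar{u}$; this fits the polynomial branch when $s\leq N/2$, while for $s>N/2$ one has trivially $g_{1}\in C^{\infty}\subset C^{[s]+2}$. If $\alpha$ is not an even integer, the additional hypothesis $\alpha\geq [s]+1$ places $g_{1}$ in the non-polynomial branch with $\alpha\geq[s]+1$ when $s\leq N/2$, and for $s>N/2$ one checks by explicit differentiation of $|u|^{\alpha}u$ on $\mathbb{R}^{2}\setminus\{0\}$ that every derivative of order $k\leq[s]+2$ is bounded by $C|u|^{\alpha+1-k}$, which (since $\alpha+1-k\geq 0$ in this range) extends continuously to the origin, giving $g_{1}\in C^{[s]+2}(\mathbb{C},\mathbb{C})$.

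The essential content of the proof lies entirely in Theorem \ref{th1}; the only technical point is the explicit verification that $\lambda_{1}|u|^{\alpha}u$ sits inside class $\mathcal{C}(\alpha,s)$ and, separately, enjoys the stronger regularity needed for Lipschitz dependence. These checks are standard (cf.\ \cite{G,N1,N2}), so I do not anticipate any genuine obstacle beyond careful bookkeeping of the even-integer versus non-even-integer dichotomy — the real work has been done in establishing Theorem \ref{th1} itself.
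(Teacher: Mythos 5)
Your proposal is correct and follows essentially the same route as the paper, which proves the corollary simply by invoking Remark \ref{model} to place $g_{1}(u)=\lambda_{1}|u|^{\alpha}u$ in class $\mathcal{C}(\alpha,s)$ and then applying Theorem \ref{th1}, including the same case-checking for the locally Lipschitz addendum. The only caveat, shared with the paper's own terse treatment, is the borderline case $\alpha=[s]+1$ with $\alpha$ a non-even integer and $s>N/2$, where the bound $|D^{[s]+2}g_{1}(u)|\leq C|u|^{\alpha+1-[s]-2}$ gives boundedness rather than continuity of the top-order derivative at the origin, so the inclusion $g_{1}\in C^{[s]+2}(\mathbb{C},\mathbb{C})$ should be argued with a little more care there.
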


\begin{rem}\label{r1.6'}
The conclusions of Theorem \ref{th1} and Corollary \ref{cor1} also contains cases that $s$ equals an integer, note that in such cases $[s]=s-1$.
\end{rem}

\begin{rem}\label{r1.6''}
Note that Corollary $\ref{cor1}$ applies in particular to the single power case $g(u)=\lambda|u|^{\alpha}u$ with $\lambda\in \mathbb{C}$ and $\alpha>0$.
\end{rem}

The key ingredient in the proof of Theorem \ref{th1} is an estimate showing that $g_{1}(u_{n})-g_{1}(u)$ is bounded in Besov spaces by a Lipschitz term(i.e., with a factor $u_{n}-u$ in some Besov space of order $s$) plus a lower order Kato's remainder term $K(u_{n},u)$ of H\"{o}lder type. The Lipschitz term is easily absorbed by the left-hand side of the inequality, and what we need to do is showing that the Kato's remainder term $K(u_{n},u)$ converges to 0. Note that convergence in Lebesgue spaces holds by the contraction mapping estimates, thus for $s\geq N/2$, we can take advantage of the embedding $H^{s}\hookrightarrow L^{\infty}$(resp., $H^{s,r}\hookrightarrow L^{\infty}$ with $r>2$, if $s=\frac{N}{2}$) to obtain the conclusions. Our proof of the covergence of the remainder term $K(u_{n},u)$ for $s<N/2$ is partly based on the method used by T. Kato to prove continuous dependence in $H^{1}(\mathbb{R}^{N})$, which is actually a contradiction argument by using dominated convergence theorem; we also apply the idea presented by T. Tao and M. Visan in \cite{Tao2} of using intermediate Besov spaces of lower order(see Lemma \ref{lem6} below), which is based on Strichartz's estimates for non-admissible pairs and can be extended to solve the critical case(see Claim \ref{claim1}, Claim \ref{claim2}, note that $\gamma=\alpha+2$ in the critical case). Therefore, by applying Strichartz's estimates, the main difficulty in proving Theorem $\ref{th1}$ is to estimate terms like $\|g_{1}(u)-g_{1}(v)\|_{\dot{B}^{s}_{r,2}}$, where $\frac{2N}{N+2}\leq r\leq 2$. Because $\dot{H}^{s,r}\hookrightarrow \dot{B}^{s}_{r,2}$ for $\frac{2N}{N+2}\leq r\leq 2$, we will estimate $\|g_{1}(u)-g_{1}(v)\|_{\dot{H}^{s,r}}$ instead.

\begin{rem}\label{rem100}
In fact, when $s\leq N/2$, we can write the Kato's remainder term $K(u_{n},u)$ in a general form(see \eqref{Kato} and \eqref{Kato2} in Section 3)
\[K(u_{n},u)=\|u\|^{[s]+1}_{B^{s}_{\rho,2}}\|D^{[s]+1}g_{1}(u_{n})-D^{[s]+1}g_{1}(u)\|_{L^{\frac{\rho^{\ast}_{0}}{\alpha-[s]}}}.\]
Therefore, in order to obtain the continuity of the solution map $\phi\mapsto u$, we can also only assume (\ref{class1}) in definition \ref{class}(without assuming (\ref{class2})) and apply dominated convergence theorem to the Kato's remainder term $K(u_{n},u)$(except when $s$ equals an integer in the critical case, see the proof of Claim \ref{claim2}, in which we need the assumption (\ref{class2})). It's obvious that assumption (\ref{class1}) is enough for us to construct the dominating function, but we won't konw when the flow is locally Lipschitz. Here the general assumption (\ref{class2}) is presented as an alternative criterion between H\"{o}lder continuity and Lipschitz continuity of the solution map. Therefore, when $0<s<\min\{1,\,\frac{N}{2}\}$, if we assume further that $|g'(u)-g'(v)|\leq C|u-v|^{\min(\alpha,1)}(|u|+|v|)^{\max(0,\alpha-1)}$ for any $u,v\in\mathbb{C}$, then the continuous dependence obtained by Cazenave, Fang and Han in Theorem 1.2 in \cite{CFZ} will be locally Lipschitz if $\alpha\geq 1$, this gives an answer to the question raised by the authors in \cite{CFZ}.
\end{rem}

The rest of this paper is organized as follows. In section 2 we give some useful notation and preliminary knowledge. In section 3 we prove Theorem $\ref{th1}$ by dealing with four cases. \\

\section{Notation and preliminary knowledge}

In this section we collect some preliminary knowledge which will be
applied  to show Theorem $\ref{th1}$ in section 3.

\subsection{Some notation}
Throughout this paper, we use the following notation. $\bar{z}$ is the conjugate of the complex number $z$, $\Re z$ and $\Im z$ are respectively the real the imaginary part of the complex number $z$. All function spaces involved are spaces of complex valued functions. Let $L^{q}_{t}(\mathbb{R},L^{r}_{x}(\mathbb{R}^{N}))$ denote the Banach space with norm
\[\|u\|_{L^{q}(\mathbb{R},L^{r})}=(\int_{\mathbb{R}}(\int_{\mathbb{R}^{N}}|u(t,x)|^{r}dx)^{q/r}dt)^{1/q},\]
with the usual modifications when $q$ or $r$ is infinity, or when the domain $\mathbb{R}\times\mathbb{R}^{N}$ is replaced by a smaller region of spacetime such as $I\times\mathbb{R}^{N}$.

We use the convention $\langle x\rangle=(1+|x|^{2})^{1/2}$ and $\langle \nabla\rangle=\sqrt{I-\Delta}$.

We use the notation $[x]$ to denote the largest integer which is less than $x$, and the remainder part of $x$ is denoted by $\{x\}$, namely, $0<\{x\}=x-[x]\leq 1$.

The positive part of a real number $x$ is defined by $(x)^{+}$, that is, $(x)^{+}:=\max\{x,0\}$, the negative part of $x$ is defined by $(x)^{-}:=-\min\{x,0\}$.

For a function $f(z)$ defined for a single complex variable $z$ and
for a positive integer $k$, we denote by $f^{(k)}_{z\backslash\bar{z}}(z)$ one of the $k+1$ derivatives of
this form: $\frac{\partial^{k}f}{\partial z^{k}}, \frac{\partial^{k}f}{\partial z^{k-1}\partial\bar{z}}, \cdots, \frac{\partial^{k}f}{\partial z\partial \bar{z}^{k-1}}, \frac{\partial^{k}f}{\partial\bar{z}^{k}}$, that is, $f^{(k)}_{z\backslash\bar{z}}(z):=D^{k}f(z)$, where we use the convention $a\setminus b$ to denote the logical relationship ``$a$ or $b$". $f^{(1)}_{z\backslash\bar{z}}$ will be denoted by $f_{z\backslash\bar{z}}$ simply, where $f_{z}$, $f_{\bar{z}}$ are the usual complex derivatives(under the identification $\mathbb{C}=\mathbb{R}^{2}$) defined by
\[f_{z}:=D_{z}f=\frac{1}{2}(\frac{\partial f}{\partial x}-i\frac{\partial f}{\partial y}), \,\,\,\, f_{\bar{z}}:=D_{\bar{z}}f=\frac{1}{2}(\frac{\partial f}{\partial x}+i\frac{\partial f}{\partial y}).\]

For $0\leq s< N/2$ and $\alpha$ satisfying \eqref{eq4} there is a
particular admissible pair $(\gamma,\rho)$, defined by
\begin{equation}\label{eq7}
    \gamma=\frac{4(\alpha+2)}{\alpha(N-2s)},\rho=\frac{N(\alpha+2)}{N+s\alpha},
\end{equation}
which will play an important  role in our discussion.

In what follows positive constants will be denoted by $C$ and will change from line to line. If necessary, by $C_{\star,\cdots,\star}$ we denote positive constants depending only on the quantities appearing in subscript continuously.

The following proposition concerns the various parameters that occur in our paper.
\begin{prop}\label{prop2}
Suppose that $\alpha, s, \rho$, and $\gamma$ satisfy \eqref{eq4} and
\eqref{eq7}. For any $2\leq r< N/s$ and $0\leq j\leq [s]$, we define indices $r^{\ast}_{j}$ by
\[\frac{1}{r^{\ast}_{j}}=\frac{1}{r}-\frac{s-j}{N}.\]
Then:\\
{\rm(i)} $2<\rho<\frac{2N}{N-2}$, $2<\gamma<\infty$; \\
{\rm(ii)} $\rho<N/s$ and $\rho^{\ast}_{j}> \rho$ for any $0\leq j\leq [s]$; \\
{\rm (iii)} $\frac{1}{\rho'}+\frac{\alpha j}{N}=\frac{\alpha}{\rho^{\ast}_{j}}+\frac{1}{\rho}$, and in particular $\frac{1}{\rho'}=\frac{\alpha}{\rho^{\ast}_{0}}+\frac{1}{\rho}$; \\
{\rm (iv)} $\frac{1}{\gamma'}=\frac{4-\alpha(N-2s)}{4}+\frac{\alpha+1}{\gamma}\geq \alpha/\gamma+1/\gamma$, with equality if and only if $\alpha=\frac{4}{N-2s}$; \\
{\rm (v)} $\frac{1}{r^{\ast}_{j}}=\frac{j}{s}\cdot \frac{1}{r}+(1-\frac{j}{s})\frac{1}{r^{\ast}_{0}}$, from which we can deduce that $B^{s}_{r,2}\hookrightarrow H^{j,\,r^{\ast}_{j}}$.
\end{prop}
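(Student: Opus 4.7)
The proposition collects a handful of algebraic identities and inequalities in $\alpha, s, N$ together with one standard embedding. My plan is direct verification from the definitions $\rho = N(\alpha+2)/(N+s\alpha)$ and $\gamma = 4(\alpha+2)/(\alpha(N-2s))$, using only the standing hypothesis \eqref{eq4}, namely $\alpha \leq 4/(N-2s)$ with $s < N/2$. None of the steps should present a genuine obstacle; this lemma is essentially bookkeeping for the Strichartz exponents that appear in the sequel.

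For (i), clearing denominators in $\rho > 2$ yields $N > 2s$, which holds since $s < N/2$, while $\rho < 2N/(N-2)$ reduces to $\alpha(N-2-2s) < 4$; this follows from the hypothesis $\alpha(N-2s) \leq 4$ and $\alpha > 0$, since $\alpha(N-2-2s) = \alpha(N-2s) - 2\alpha \leq 4 - 2\alpha < 4$. The bounds $2 < \gamma < \infty$ reduce to the same inequality and to the trivial positivity $\alpha(N-2s) > 0$. For (ii), $\rho < N/s$ is equivalent to $N > 2s$ after clearing denominators; then $\rho^{\ast}_{j}$ is positive and finite because $1/\rho - (s-j)/N \geq 1/\rho - s/N > 0$; and $\rho^{\ast}_{j} > \rho$ reduces to $(s-j)/N > 0$, which holds since $j \leq [s] < s$ (even when $s$ is an integer, the paper's convention $[s] = s-1$ guarantees $s - j \geq 1$).

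For (iii), a short algebraic manipulation gives $1/\rho' + \alpha j/N - (\alpha/\rho^{\ast}_{j} + 1/\rho) = 1 - (\alpha+2)/\rho + \alpha s/N$, which vanishes exactly when $\rho = N(\alpha+2)/(N+\alpha s)$, as defined. For (iv), the identity reduces to $(\alpha+2)/\gamma = \alpha(N-2s)/4$, which is exactly the definition of $\gamma$; the subsequent inequality reduces to $4 - \alpha(N-2s) \geq 0$, which is precisely the subcritical/critical hypothesis, with equality iff $\alpha = 4/(N-2s)$.

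For (v), the interpolation identity follows from a one-line substitution: $(j/s)(1/r) + (1-j/s)(1/r^{\ast}_{0}) = 1/r - (1-j/s)(s/N) = 1/r - (s-j)/N = 1/r^{\ast}_{j}$. For the embedding $B^{s}_{r,2} \hookrightarrow H^{j, r^{\ast}_{j}}$ I would chain two standard facts. Since $r \geq 2$, Minkowski's inequality applied in $L^{r/2}$ to the Littlewood--Paley square function gives $B^{s}_{r,2} \hookrightarrow H^{s,r}$; then the Sobolev embedding $H^{s,r} \hookrightarrow H^{j, r^{\ast}_{j}}$ is exactly calibrated by the definition of $r^{\ast}_{j}$, trading $s-j$ derivatives for the corresponding integrability loss. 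Both facts are standard and can be found in \cite{b8,b9,b10,b11}. The only mild subtlety, and thus the closest thing here to an obstacle, is to invoke the correct direction of the Besov--Sobolev comparison (which goes $B^{s}_{r,2} \hookrightarrow H^{s,r}$ precisely because $r \geq 2$).
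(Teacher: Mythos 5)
Your verification of (i)--(iv) is correct and coincides with the paper, whose proof for these items is just the "simple calculations" you carry out explicitly; every algebraic reduction you state (e.g.\ $\rho>2\Leftrightarrow N>2s$, $\rho<\tfrac{2N}{N-2}\Leftrightarrow\alpha(N-2-2s)<4$, the reductions of (iii) and (iv) to the defining relations for $\rho$ and $\gamma$) checks out, and you correctly note the role of the convention $[s]<s$ in (ii). For (v) you take a genuinely different route to the embedding: you chain $B^{s}_{r,2}\hookrightarrow H^{s,r}$ (valid since $r\geq 2$) with the Bessel-potential Sobolev embedding $H^{s,r}\hookrightarrow H^{j,r^{\ast}_{j}}$, whose exponent is exactly the defining relation $\tfrac{1}{r^{\ast}_{j}}=\tfrac{1}{r}-\tfrac{s-j}{N}$ (and $r^{\ast}_{j}<\infty$ because $r<N/s$). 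The paper instead uses the two embeddings $B^{s}_{r,2}\hookrightarrow H^{s,r}$ and $B^{s}_{r,2}\hookrightarrow L^{r^{\ast}_{0}}$ and then interpolates via the Gagliardo--Nirenberg and H\"older inequalities, bounding $\||\nabla|^{j}f\|_{L^{r^{\ast}_{j}}}$ and $\|f\|_{L^{r^{\ast}_{j}}}$ by $\||\nabla|^{s}f\|_{L^{r}}^{j/s}\|f\|_{L^{r^{\ast}_{0}}}^{1-j/s}$; this is where the convexity identity in (v) is actually used, whereas in your argument it is only a bookkeeping remark. Both routes are standard and correct: yours is shorter and avoids invoking Gagliardo--Nirenberg with fractional derivatives, while the paper's makes the interpolation structure of the exponents explicit, which is the form in which these indices are used later. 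Your closing caveat about the direction of the Besov--Sobolev comparison ($B^{s}_{r,2}\hookrightarrow H^{s,r}$ precisely because $r\geq 2$) is the right point to flag, and it is needed in the paper's argument as well.
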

\begin{proof} The proof of (i)-(iv) needs only simple calculations. For (v), it's easy to note that
$\frac{1}{r^{\ast}_{j}}=\frac{j}{s}\cdot\frac{1}{r}+(1-\frac{j}{s})\frac{1}{r^{\ast}_{0}}$,
$B^{s}_{r,2}\hookrightarrow H^{s,r}$ and $B^{s}_{r,2}\hookrightarrow
L^{r^{\ast}_{0}}$. Then by Gagliardo-Nirenberg's inequality and
H\"{o}lder's inequality, we deduce that
\[\||\nabla|^{j}f\|_{L^{r^{\ast}_{j}}}\leq C'\||\nabla|^{s}f\|^{j/s}_{L^{r}}\cdot \|f\|^{1-j/s}_{L^{r^{\ast}_{0}}}\leq C\|f\|_{B^{s}_{r,2}},\]
\[\|f\|_{L^{r^{\ast}_{j}}}\leq C''\|f\|^{j/s}_{L^{r}}\cdot \|f\|^{1-j/s}_{L^{r^{\ast}_{0}}}\leq C\|f\|_{B^{s}_{r,2}},\]
for every $f\in B^{s}_{r,2}$. Thus, we have
$B^{s}_{r,2}\hookrightarrow H^{j,\,r^{\ast}_{j}}$.
\end{proof}

\subsection{ Basic harmonic analysis}
Let $\wp$ be an annulus in momentum space $\mathbb{R}^{N}_{\xi}$
given by
\[\wp=\{\xi\,:\, \frac{3}{4}\leq |\xi|\leq\frac{8}{3}\},\]
and let $\chi(\xi)$ be a radial bump function supported in the ball $\{\xi\in\mathbb{R}^{N}: |\xi|\leq 4/3\}$ and equal to 1 on the ball
 $\{\xi\in\mathbb{R}^{N}: |\xi|\leq 3/4\}$, $\hat{\varphi}(\xi)=\chi(\xi/2)-\chi(\xi)$ be a radial bump function supported in the annulus
  $\wp$ respectively, such that \[0\leq\chi(\xi),\,\,\widehat{\varphi}(\xi)\leq 1.\]
So we can define decomposition of the whole momentum space $\mathbb{R}^{N}_{\xi}$,
\[\left\{
  \begin{array}{ll}
    \chi(\xi)+\sum_{j\geq 0}\widehat{\varphi}(2^{-j}\xi)=1, & \forall \xi \in \mathbb{R}_\xi^N  \,\,\,\,\, (inhomogeneous),\\
    \,&\\
   \sum_{j\in \mathbb{Z}}\widehat{\varphi}(2^{-j}\xi)=1, & \forall \xi \in \mathbb{R}_\xi^N\setminus \{0\} \,\,\, (homogeneous),
  \end{array}
\right.\]
with the following properties: \\
i) $supp\, \widehat{\varphi}(2^{-j}\cdot)\bigcap
supp\,\widehat{\varphi}(2^{-j'}\cdot)=\phi$,
for $\forall j,j'\in\mathbb{Z}$  such that $|j-j'|\geq 2$; \\
ii) $supp\, \chi(\cdot)\bigcap
supp\,\widehat{\varphi}(2^{-j}\cdot)=\phi$, for $\forall j\geq 1$.

Now setting $h(x)=\mathcal{F}^{-1}(\chi(\xi))$,
$\varphi(x)=\mathcal{F}^{-1}(\widehat{\varphi}(\xi))$, then we can define the Fourier multipliers as following\\
$\left\{
\begin{array}{ll}
\Delta_{j}u=\mathcal{F}^{-1}(\widehat{\varphi}(2^{-j}\xi)\widehat{u}(\xi))
=2^{jN}\int_{R^N}\varphi(2^jy)u(x-y)dy, &j\in\mathbb{Z},\\
\,&\\
S_{<j}u=\mathcal{F}^{-1}(\chi(2^{-j}\xi)\widehat{u}(\xi))=2^{jN} \int_{R^N}h(2^jy)u(x-y)dy, &j\in\mathbb{Z},\\
\,&\\
S_{\geq j}u=u-S_{<j}u=\mathcal{F}^{-1}((1-\chi(2^{-j}\xi))\widehat{u}(\xi)), &j\in\mathbb{Z}.
\end{array}
\right.$ \\

Similarly, we can define $S_{\leq j}$, $S_{>j}$ and $S_{i\leq \cdot<j}:=S_{<j}-S_{<i}$. Due to identity $\widehat{\varphi}(\xi/2^j)=\chi(\xi/2^{j+1})-\chi(\xi/2^j)$, we have $\Delta_j u=(S_{<j+1}-S_{<j})u$ for any $j\in\mathbb{Z}$. So for any $u\in \mathcal{S}'(\mathbb{R}^{N})$, we have inhomogeneous Littlewood-Paley dyadic decomposition $u=S_{<0}u+\sum_{j\geq0}\Delta_{j}u$ and homogeneous decomposition $u=\sum_{j\in\mathbb{Z}}\Delta_{j}u$ in $\mathcal{S}'(\mathbb{R}^{N})$, respectively.

As with all Fourier multipliers, the Littlewood-Paley operators
commute with the Schr\"{o}dinger group $e^{it\Delta}$, as well as
with differential operators such as $i\partial_{t}+\Delta$. We will
use basic properties of these operators frequently, for instance, in
the following lemma(see \cite{b22,Tao2}).
\begin{lem}\label{bernstein}(Bernstein estimates). For $1\leq p\leq q\leq \infty$, $s\geq0$, we have
\[\||\nabla|^{\pm s}\Delta_{j}f\|_{L^{p}_{x}(\mathbb{R}^{N})}\sim 2^{\pm js}\|\Delta_{j}f\|_{L^{p}_{x}(\mathbb{R}^{N})},\]
\[\||\nabla|^{s}S_{\leq j}f\|_{L^{p}_{x}(\mathbb{R}^{N})}\leq C2^{js}\|S_{\leq j}f\|_{L^{p}_{x}(\mathbb{R}^{N})},\]
\[\|\Delta_{j}f\|_{L^{q}_{x}(\mathbb{R}^{N})}\leq C2^{j(\frac{N}{p}-\frac{N}{q})}\|\Delta_{j}f\|_{L^{p}_{x}(\mathbb{R}^{N})},\]
\[\|S_{\leq j}f\|_{L^{q}_{x}(\mathbb{R}^{N})}\leq C2^{j(\frac{N}{p}-\frac{N}{q})}\|S_{\leq j}f\|_{L^{p}_{x}(\mathbb{R}^{N})},\]
\[\|S_{\geq j}f\|_{L^{p}_{x}(\mathbb{R}^{N})}\leq C2^{-js}\||\nabla|^{s}S_{\geq j}f\|_{L^{p}_{x}(\mathbb{R}^{N})}.\]
\end{lem}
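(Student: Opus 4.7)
The plan is to derive every inequality from Young's convolution inequality and the scaling rule $\|a^{N}\phi(a\cdot)\|_{L^r}=a^{N(1-1/r)}\|\phi\|_{L^r}$, by realising each operator on the left as convolution with a kernel of the form $2^{jN}K(2^j\cdot)$ for a fixed, $j$-independent function $K$. The unifying trick is to insert a ``fattened'' Fourier cutoff $\tilde\varphi$, $\tilde\chi$ or $\tilde\psi$ that equals $1$ on the relevant frequency support of $\Delta_{j}f$, $S_{\leq j}f$ or $S_{\geq j}f$; this leaves these objects invariant but lets us modify the symbol of $|\nabla|^{\pm s}$ outside that support. The whole task then reduces, in each case, to checking that one explicit symbol $m(\eta)$, independent of $j$, has inverse Fourier transform in $L^{1}(\mathbb{R}^{N})$. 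The case $s=0$ is trivial, so I assume $s>0$ throughout.

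For the $L^{p}\!\to\!L^{q}$ inequalities (the third and fourth displayed estimates), choose $\tilde\varphi,\tilde\chi\in C^{\infty}_{c}$ with $\tilde\varphi\equiv 1$ on $\mathrm{supp}\,\widehat\varphi$ and $\tilde\chi\equiv 1$ on $\mathrm{supp}\,\chi$. Then $\Delta_{j}f=[2^{jN}(\mathcal{F}^{-1}\tilde\varphi)(2^{j}\cdot)]\ast\Delta_{j}f$ and similarly for $S_{\leq j}f$. Young's inequality with the triple satisfying $1/q+1=1/p+1/r$ gives $\|\Delta_{j}f\|_{L^{q}}\leq 2^{jN(1-1/r)}\|\mathcal{F}^{-1}\tilde\varphi\|_{L^{r}}\,\|\Delta_{j}f\|_{L^{p}}$, and the exponent $N(1-1/r)$ coincides with $N(1/p-1/q)$.

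For the derivative-gain statements (first two displayed inequalities), the symbol $|\xi|^{s}\tilde\varphi(2^{-j}\xi)$ factorises as $2^{js}\mu(2^{-j}\xi)$ with $\mu(\eta)=|\eta|^{s}\tilde\varphi(\eta)$. Since $\tilde\varphi$ is supported in an annulus bounded away from the origin, $\mu\in C^{\infty}_{c}$, hence $\check\mu\in\mathcal{S}\subset L^{1}$, and Young's inequality produces the claimed bound with constant independent of $j$; the reverse inequality uses instead the symbol $|\eta|^{-s}\tilde\varphi(\eta)$, equally smooth and compactly supported away from $0$. For $\|\,|\nabla|^{s}S_{\leq j}f\|_{L^{p}}$ one repeats the argument with $\tilde\chi$ in place of $\tilde\varphi$: now $\mu(\eta)=|\eta|^{s}\tilde\chi(\eta)$ is only $C^{[s]}$ at the origin, but $\check\mu$ still lies in $L^{1}(\mathbb{R}^{N})$ because it is bounded (inverse Fourier transform of a bounded compactly supported function) and, at infinity, inherits the $|x|^{-N-s}$ decay of the classical Riesz kernel coming from the $|\eta|^{s}$ singularity, which is integrable outside the origin for $s>0$.

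The main obstacle is the last estimate, since $|\xi|^{-s}$ is singular at $\xi=0$ and is not a global $L^{p}$-multiplier. The resolution exploits that $\widehat{S_{\geq j}f}$ vanishes on $\{|\xi|<(3/4)2^{j}\}$: pick $\tilde\psi\in C^{\infty}$ with $\tilde\psi\equiv 1$ on $\{|\xi|\geq 3/4\}$ and $\tilde\psi\equiv 0$ on $\{|\xi|\leq 1/2\}$, so that $\tilde\psi(2^{-j}D)S_{\geq j}f=S_{\geq j}f$ and $|\nabla|^{-s}S_{\geq j}f=2^{-js}\nu(2^{-j}D)S_{\geq j}f$ with $\nu(\eta)=|\eta|^{-s}\tilde\psi(\eta)$. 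The cutoff kills the singularity at $\eta=0$, so $\nu$ is smooth; its $\alpha$-th derivative decays at infinity like $|\eta|^{-s-|\alpha|}$, so integrating by parts $N+1$ times in the inverse Fourier integral yields $|\check\nu(x)|\leq C|x|^{-N-1}$ for $|x|\geq 1$, while the standard Riesz-potential analysis gives locally integrable behaviour $|x|^{s-N}$ near the origin. Thus $\check\nu\in L^{1}(\mathbb{R}^{N})$ with $j$-independent norm, and Young's inequality applied to $g=|\nabla|^{s}S_{\geq j}f$ closes the proof.
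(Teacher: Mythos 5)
Your proposal is correct. Note that the paper does not prove this lemma at all; it is quoted from the references \cite{b22,Tao2}, and your argument is exactly the standard one behind it: realise each Littlewood--Paley/multiplier operator as convolution with a rescaled kernel $2^{jN}K(2^{j}\cdot)$ built from a fattened cutoff that acts as the identity on the relevant frequency support, and then apply Young's inequality together with the scaling identity, so that all constants are $j$-independent. The $L^{p}\to L^{q}$ bounds and the annulus-supported cases are immediate since the corresponding symbols are in $C^{\infty}_{c}$ away from the origin. The only two places that merit a slightly more careful justification are the ones you flag: (a) the claim that $\mathcal{F}^{-1}\bigl(|\eta|^{s}\tilde\chi(\eta)\bigr)$ is bounded and decays like $|x|^{-N-s}$, and (b) the kernel of $\nu(\eta)=|\eta|^{-s}\tilde\psi(\eta)$, whose inverse Fourier integral is not absolutely convergent when $0<s\leq N$, so the ``integrate by parts $N+1$ times'' step needs to be run on a dyadic decomposition of the symbol (or on a regularisation) rather than on the raw integral; doing it dyadically gives at once the $|x|^{-N-1}$ decay for $|x|\geq 1$ and the local behaviour near the origin, which is $|x|^{s-N}$ for $0<s<N$, logarithmic for $s=N$, and bounded for $s>N$ (your blanket ``$|x|^{s-N}$'' is thus slightly imprecise for $s\geq N$, but the conclusion $\check\nu\in L^{1}$ holds in every case). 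These are routine refinements; the structure and all the exponent bookkeeping in your argument are sound.
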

The next result, which can be found in \cite{b12}(see lecture note 6, corollary 7.2 in \cite{b12}), provides important tools for dealing with the fractional power appearing in the nonlinearity.
\begin{lem}\label{lem8}
Let $u$ be $\emph{Schwartz}$. If $F$ is a Lipschitz nonlinearity then
\[\|F(u)\|_{H^{s,q}(\mathbb{R}^{N})}\leq C_{s,q,N}\|u\|_{H^{s,q}(\mathbb{R}^{N})}\]
for all $1<q<\infty$ and $0<s<1$. If instead F is a power-type nonlinearity with exponent $p\geq 1$, that is, $|F(z)-F(\omega)|\leq C_{p}(|z|^{p-1}+|\omega|^{p-1})|z-\omega|$ for all $z,\omega\in\mathbb{C}$, then
\[\|F(u)\|_{H^{s,q}(\mathbb{R}^{N})}\leq C_{s,q,N}\|u\|^{p-1}_{L^{r}(\mathbb{R}^{N})}\|u\|_{H^{s,t}(\mathbb{R}^{N})}\]
whenever $1<q,r,t<\infty$ and $0<s<1$  such that $1/q=(p-1)/r+1/t$.
\end{lem}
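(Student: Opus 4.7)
The plan is to reduce the lemma to pointwise bounds via the Strichartz--Stein difference characterization of fractional Sobolev norms: for $0<s<1$ and $1<q<\infty$,
\[
\||\nabla|^{s}f\|_{L^{q}(\mathbb{R}^{N})}\,\sim\,\|D_{s}f\|_{L^{q}(\mathbb{R}^{N})},\qquad D_{s}f(x):=\left(\int_{\mathbb{R}^{N}}\frac{|f(x+y)-f(x)|^{2}}{|y|^{N+2s}}\,dy\right)^{1/2}.
\]
This equivalence holds in exactly the range of indices appearing in the lemma, and it converts each assertion into a pointwise question about the increments $F(u(x+y))-F(u(x))$.

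The Lipschitz statement is then essentially immediate: the pointwise inequality $|F(u(x+y))-F(u(x))|\le\mathrm{Lip}(F)\,|u(x+y)-u(x)|$ yields $D_{s}F(u)(x)\le\mathrm{Lip}(F)\,D_{s}u(x)$, and taking $L^{q}$-norms gives the homogeneous part; combining with $|F(u(x))|\le\mathrm{Lip}(F)\,|u(x)|$ (implicitly assuming $F(0)=0$, needed to ensure $F(u)\in L^{q}$) produces the inhomogeneous statement. For the power-type case, the hypothesis $|F(z)-F(\omega)|\le C(|z|^{p-1}+|\omega|^{p-1})|z-\omega|$ splits $D_{s}F(u)(x)$ into a symmetric and a non-symmetric piece,
\[
D_{s}F(u)(x)\le C|u(x)|^{p-1}D_{s}u(x)+CJ(x),\qquad J(x)^{2}:=\int\frac{|u(x+y)|^{2(p-1)}|u(x+y)-u(x)|^{2}}{|y|^{N+2s}}\,dy,
\]
and H\"older's inequality with the prescribed relation $1/q=(p-1)/r+1/t$ controls the first term by $\|u\|_{L^{r}}^{p-1}\|u\|_{\dot H^{s,t}}$.

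The main obstacle is the non-symmetric integral $J(x)$: the weight $|u(x+y)|^{2(p-1)}$ depends on the integration variable $y$ and cannot be pulled out pointwise. The plan is to square and apply Minkowski's integral inequality in $L^{q/2}_{x}$ to move the $y$-integration outside,
\[
\|J^{2}\|_{L^{q/2}_{x}}\le\int\frac{\bigl\||u(\cdot+y)|^{2(p-1)}|u(\cdot+y)-u(\cdot)|^{2}\bigr\|_{L^{q/2}_{x}}}{|y|^{N+2s}}\,dy,
\]
then H\"older inside the norm with $2/q=2(p-1)/r+2/t$ combined with translation invariance of $\|u\|_{L^{r}}$, and finally the Minkowski embedding $L^{t}(L^{2})\hookrightarrow L^{2}(L^{t})$ (valid for $t\ge 2$) to recognise the residual $y$-integral as $\|u\|_{\dot H^{s,t}}^{2}$. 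This closes the estimate in the regime $q,t\ge 2$; the cases $q<2$ or $t<2$ are handled either by a dyadic-shell decomposition of $J$ followed by a Hardy--Littlewood maximal-function bound (the maximal function is bounded on $L^{r/(p-1)}$ since $r/(p-1)>1$), or, following Tao's lecture notes, by a direct Littlewood--Paley expansion of $F(u)$. To upgrade from $\dot H^{s,q}$ to $H^{s,q}$ one adds the pointwise H\"older estimate $\|F(u)\|_{L^{q}}\le C\|u\|_{L^{pq}}^{p}\le C\|u\|_{L^{r}}^{p-1}\|u\|_{L^{t}}$, the last step being log-convex interpolation at parameter $\theta=(p-1)/p$, which uses exactly the same exponent relation.
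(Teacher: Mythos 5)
You should first note that the paper does not prove this lemma at all: it is quoted from Tao's lecture notes (\cite{b12}, lecture 6, Corollary 7.2), where the proof proceeds by a Littlewood--Paley telescoping of $F(u)$ rather than by a difference characterization, so your proposal is necessarily a different route and must stand on its own. It does not, because its first step is based on a false claim. The equivalence $\||\nabla|^{s}f\|_{L^{q}}\sim\|\mathcal{D}_{s}f\|_{L^{q}}$ for the square function $\mathcal{D}_{s}f(x)=\bigl(\int|f(x+y)-f(x)|^{2}|y|^{-N-2s}dy\bigr)^{1/2}$ does \emph{not} hold ``in exactly the range of indices appearing in the lemma''. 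The direction you actually need for $u$, namely $\|\mathcal{D}_{s}u\|_{L^{q}}\leq C\|u\|_{H^{s,q}}$, is only valid for $q>\frac{2N}{N+2s}$ (this is the hypothesis in Stein's characterization, and it is sharp): for a fixed smooth bump $f$ one has $\mathcal{D}_{s}f(x)\sim|x|^{-(N+2s)/2}$ as $|x|\to\infty$, so $\mathcal{D}_{s}f\notin L^{q}$ for $q\leq\frac{2N}{N+2s}$ even though $f\in H^{s,q}$ for every $q$. Since $\frac{2N}{N+2s}>1$ whenever $N>2s$ (hence for all $N\geq2$), even your ``essentially immediate'' Lipschitz case is unproved on the portion $1<q\leq\frac{2N}{N+2s}$ of the stated range, and the symmetric term $|u|^{p-1}\mathcal{D}_{s}u$ in the power case likewise requires $t>\frac{2N}{N+2s}$, a restriction absent from the lemma.

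The non-symmetric term $J$ has a second, independent problem. After Minkowski in $L^{q/2}_{x}$ and H\"older you are left with $\bigl(\int\|\Delta_{y}u\|_{L^{t}}^{2}|y|^{-N-2s}dy\bigr)^{1/2}$, which is (for $0<s<1$) the $\dot{B}^{s}_{t,2}$ norm of $u$; this is controlled by $\|u\|_{\dot{H}^{s,t}}$ only when $t\leq2$. The mixed-norm Minkowski inequality you invoke in fact goes the opposite way for $t\geq2$: one has $\|\{f_{j}\}\|_{L^{t}_{x}(\ell^{2})}\leq\|\{f_{j}\}\|_{\ell^{2}(L^{t}_{x})}$, i.e.\ $\dot{B}^{s}_{t,2}\hookrightarrow\dot{H}^{s,t}$ and not the converse, so your claim that the estimate ``closes in the regime $q,t\geq2$'' fails precisely when $t>2$. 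What remains is the fallback sentence about dyadic shells, maximal functions, or ``a direct Littlewood--Paley expansion of $F(u)$''; but that expansion, $F(u)=\sum_{j}\bigl[F(S_{<j+1}u)-F(S_{<j}u)\bigr]$ estimated via $|\Delta_{j}u|$, the vector-valued Hardy--Littlewood maximal inequality and the Littlewood--Paley square function, \emph{is} the standard proof in the cited reference and covers the whole range $1<q,r,t<\infty$ uniformly; once you have to carry it out, the square-function reduction is doing no work. As it stands the proposal proves the lemma only on a proper subrange of exponents and leaves the genuinely delicate cases to an unexecuted sketch, so it should be reworked along the paraproduct/telescoping lines (or the exponent restrictions must be added to the statement).
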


For any $u,v \in S'(\mathbb{R}^{N})$, we have Littlewood-Paley decomposition
\[u=\sum_{j}\Delta_{j}u,\,\,\,\, v=\sum_{j'}\Delta_{j'}v,\,\,\, uv= \sum_{j,j'}\Delta_{j}u \Delta_{j'}v.\]
Therefore, the Bony's paraproducts decomposition of the product $uv$ can be expressed as(see \cite{b23})
\[uv=\sum_{j}S_{<j-1}u\Delta_{j}v+\sum_{j}S_{<j-1}v\Delta_{j}u+\sum_{|j-j'|\leq 1}\Delta_{j}u\Delta_{j'}v.\]
By applying the paraproduct estimates in Besov spaces, one can get the following Morse type product estimates(see R. Danchin \cite{b22}, proposition 1.4.3).
\begin{lem}\label{lem7}
(Morse type inequality). Let $s>0$ and $1\leq p,r \leq\infty$. Then $B_{p,r}^s\cap L^{\infty}$ is an algebra, and there exists constant
$C_{s}>0$ depending on $s$, such that the following estimate holds
\[\|uv\|_{B_{p,r}^s}\leq C_{s}(\|u\|_{L^{\infty}}\|v\|_{B_{p,r}^s}+\|v\|_{L^{\infty}}\|u\|_{B_{p,r}^s}).\]
\end{lem}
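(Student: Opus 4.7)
The natural approach is to exploit Bony's paraproduct decomposition
\[uv = \sum_j S_{<j-1}u\,\Delta_j v + \sum_j S_{<j-1}v\,\Delta_j u + \sum_{|j-j'|\le 1}\Delta_j u\,\Delta_{j'}v =: T_u v + T_v u + R(u,v),\]
already recorded just above the lemma statement, and estimate each of the three pieces separately in $B^s_{p,r}$. In each case the strategy is identical: use the known Fourier support of each summand to identify which dyadic blocks $\Delta_k$ it can contribute to, apply H\"older with the $L^\infty$ norm placed on the ``low-frequency'' factor, and then take the weighted $\ell^r$ norm in $k$.

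For the paraproduct $T_u v$, each summand $S_{<j-1}u\,\Delta_j v$ has spectrum in an annulus $\{|\xi|\sim 2^j\}$, because the spectrum of $S_{<j-1}u$ is contained in a ball of radius $\lesssim 2^{j-2}$ while that of $\Delta_j v$ lies in an annulus $\sim 2^j$. Hence $\Delta_k(S_{<j-1}u\,\Delta_j v)=0$ unless $|k-j|\le C_0$ for some absolute constant $C_0$, and
\[\|\Delta_k T_u v\|_{L^p}\lesssim \|u\|_{L^\infty}\sum_{|j-k|\le C_0}\|\Delta_j v\|_{L^p}.\]
Multiplying by $2^{ks}$ and taking the $\ell^r$ norm in $k$ yields $\|T_u v\|_{B^s_{p,r}}\lesssim \|u\|_{L^\infty}\|v\|_{B^s_{p,r}}$, and the symmetric argument gives $\|T_v u\|_{B^s_{p,r}}\lesssim \|v\|_{L^\infty}\|u\|_{B^s_{p,r}}$. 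The low-frequency block $S_{<0}(uv)$ inherent to the inhomogeneous norm is dominated by $\|uv\|_{L^p}\le \|u\|_{L^\infty}\|v\|_{L^p}\lesssim \|u\|_{L^\infty}\|v\|_{B^s_{p,r}}$, where the last inequality uses $s>0$ together with the embedding $B^s_{p,r}\hookrightarrow L^p$.

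The remaining and most delicate piece is the remainder $R(u,v)$, which is where the hypothesis $s>0$ is crucially needed. Each summand $\Delta_j u\,\Delta_{j'}v$ with $|j-j'|\le 1$ has spectrum in a \emph{ball} of radius $\sim 2^j$, so $\Delta_k(\Delta_j u\,\Delta_{j'}v)=0$ unless $j\ge k-C_0$, and thus
\[2^{ks}\|\Delta_k R(u,v)\|_{L^p}\lesssim \|u\|_{L^\infty}\sum_{j\ge k-C_0} 2^{(k-j)s}\bigl(2^{js}\|\Delta_j v\|_{L^p}\bigr).\]
Since $s>0$, the one-sided geometric kernel $K(n)=2^{-ns}\mathbf{1}_{n\ge -C_0}$ belongs to $\ell^1(\mathbb{Z})$, so Young's convolution inequality on $\mathbb{Z}$ converts the displayed bound into the $\ell^r$ estimate $\|R(u,v)\|_{B^s_{p,r}}\lesssim \|u\|_{L^\infty}\|v\|_{B^s_{p,r}}$; by the symmetry of $R$ in $u$ and $v$ one likewise obtains $\|R(u,v)\|_{B^s_{p,r}}\lesssim \|v\|_{L^\infty}\|u\|_{B^s_{p,r}}$. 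Summing the three bounds yields the stated inequality. The main obstacle is exactly this remainder step: the ``low-low'' frequency interactions can cascade into arbitrarily high frequencies, and without $s>0$ the geometric sum diverges and no such product estimate can hold, so one must take care to preserve the positivity of $s$ throughout and to invoke Young's inequality in the $\ell^r$ norm rather than a coarser bound.
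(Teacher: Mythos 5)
Your argument is correct and is precisely the standard Bony paraproduct proof that the paper itself only gestures at: the lemma is quoted from Danchin (Proposition 1.4.3) with the remark that it follows from paraproduct estimates in Besov spaces, and your treatment of $T_u v$, $T_v u$, the remainder $R(u,v)$ via the one-sided $\ell^1$ kernel for $s>0$, and the low-frequency block supplies exactly those details. No gaps; the slight imprecision about the support radius of $S_{<j-1}u$ is harmless since the spectral separation needed for the annulus localization still holds.
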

We have the following alternative definition of the Besov spaces(see \cite{b10,b11}), which is very convenient for nonlinear estimates.
\begin{lem}\label{alternative}
Assume that $s>0$ is not an integer. Let $1\leq p\leq\infty$ and $1\leq q<\infty$, then
\begin{equation}\label{alter definition}
    \|f\|_{B^{s}_{p,q}}\sim\|f\|_{L^{p}}+\sum_{j=1}^{N}(\int_{0}^{\infty}(t^{[s]-s}\sup_{|y|\leq t}\|\Delta_{y}\partial_{x_{j}}^{[s]}f\|_{L^{p}})^{q}\frac{dt}{t})^{1/q},
\end{equation}
where $[s]$ denotes the largest integer which is less than $s$ and $\Delta_{y}$ denotes the first order difference operator.
\end{lem}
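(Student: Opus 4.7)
The plan is to deduce this characterization from the Littlewood-Paley definition of $B^{s}_{p,q}$ in two steps: first reduce to the case $s=\sigma\in(0,1)$ by peeling off $[s]$ derivatives, and then identify the right-hand integral with the classical modulus-of-continuity characterization for $0<\sigma<1$.

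Setting $\sigma:=s-[s]\in(0,1)$, the Bernstein estimates of Lemma \ref{bernstein} give, on each dyadic shell, $\sum_{j=1}^{N}\|\partial_{x_j}^{[s]}\Delta_k f\|_{L^p}\sim 2^{k[s]}\|\Delta_k f\|_{L^p}$; multiplying by $2^{k\sigma}$ and taking the $\ell^q$ norm in $k\ge 0$ (with the low-frequency tail absorbed into $\|f\|_{L^p}$) produces the equivalence
\begin{equation*}
\|f\|_{B^{s}_{p,q}}\sim \|f\|_{L^{p}}+\sum_{j=1}^{N}\|\partial_{x_j}^{[s]}f\|_{B^{\sigma}_{p,q}}.
\end{equation*}
It is therefore enough to show, for an arbitrary function $g$ and any $\sigma\in(0,1)$,
\begin{equation*}
\|g\|_{B^{\sigma}_{p,q}}\sim \|g\|_{L^{p}}+\Big(\int_{0}^{\infty}\bigl(t^{-\sigma}\omega_{p}(g,t)\bigr)^{q}\,\tfrac{dt}{t}\Big)^{1/q},\qquad \omega_{p}(g,t):=\sup_{|y|\le t}\|g(\cdot+y)-g(\cdot)\|_{L^{p}},
\end{equation*}
and then to apply this equivalence to each $g=\partial_{x_j}^{[s]}f$.

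For the $(\lesssim)$ direction, fix $t>0$ and choose $k_{0}\in\mathbb{Z}$ with $2^{-k_{0}}\le t<2^{-k_{0}+1}$. Split $g=\sum_{k}\Delta_{k}g$ and use the mean-value estimate $\|\Delta_{y}\Delta_{k}g\|_{L^{p}}\lesssim t\,2^{k}\|\Delta_{k}g\|_{L^{p}}$ for $k<k_{0}$, together with the trivial bound $\|\Delta_{y}\Delta_{k}g\|_{L^{p}}\le 2\|\Delta_{k}g\|_{L^{p}}$ for $k\ge k_{0}$. Writing $a_{k}:=2^{k\sigma}\|\Delta_{k}g\|_{L^{p}}$, this reduces $t^{-\sigma}\omega_{p}(g,t)$ to a discrete convolution of $(a_{k})$ against a kernel with geometric decay on both sides (using $0<\sigma<1$), so Young's inequality in $\ell^{q}$ delivers the bound, with the inhomogeneous remainder absorbed by $\|g\|_{L^{p}}$. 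For the reverse direction one exploits the cancellation $\int\varphi\,dx=\widehat{\varphi}(0)=0$ coming from the annular support of $\widehat{\varphi}$ to write $\Delta_{k}g(x)=2^{kN}\!\int\varphi(2^{k}y)\bigl[g(x-y)-g(x)\bigr]dy$, whence $\|\Delta_{k}g\|_{L^{p}}\lesssim\int|\varphi(z)|\,\omega_{p}(g,2^{-k}|z|)\,dz$ by Minkowski. The Schwartz decay of $\varphi$ turns this into a $\log$-scale convolution of $u\mapsto u^{-\sigma}\omega_{p}(g,u)$ against an integrable kernel, and a final application of Young's inequality in $L^{q}(du/u)$ yields $\|g\|_{\dot B^{\sigma}_{p,q}}$ controlled by the claimed integral.

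The main obstacle is handling the Schwartz tail in the $(\gtrsim)$ direction carefully enough that the log-scale convolution reduces to the scalar Young/Hardy inequality without picking up a logarithmic loss; the mean-zero cancellation of $\varphi$ is essential, since without it the contribution from $|z|\lesssim 1$ would only give $\omega_{p}(g,2^{-k})$ multiplied by an $O(1)$ mass rather than decaying appropriately. Since the equivalence is a standard piece of Littlewood-Paley theory, for publication it is legitimate simply to cite \cite{b10,b11}.
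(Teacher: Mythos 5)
Your outline is essentially correct, but note that the paper does not prove this lemma at all: it is quoted as a known characterization with a citation to \cite{b10,b11} (it is essentially Theorem 2.5.12 in Triebel), which is also the option you mention at the end. Your sketch is the standard textbook argument behind that citation: the shell-by-shell equivalence $\sum_{j}\|\partial_{x_j}^{[s]}\Delta_k f\|_{L^p}\sim 2^{k[s]}\|\Delta_k f\|_{L^p}$ reduces matters to $\sigma=s-[s]\in(0,1)$, and the two-sided estimate for $0<\sigma<1$ via the split at $2^{-k_0}\sim t$ (mean value bound for low frequencies, trivial bound for high ones) and, conversely, the mean-zero cancellation of $\varphi$ plus a log-scale Young inequality, is exactly how the classical modulus-of-continuity characterization is proved. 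Two small points would need to be filled in to make this airtight: (i) the lower bound $2^{k[s]}\|\Delta_k f\|_{L^p}\lesssim\sum_j\|\partial_{x_j}^{[s]}\Delta_k f\|_{L^p}$ does not follow verbatim from Lemma \ref{bernstein}; you need the extra multiplier argument that writes $\widehat{\varphi}(2^{-k}\xi)=\sum_j \widehat{\varphi}(2^{-k}\xi)\,\overline{(i\xi_j)^{[s]}}\bigl(\sum_l \xi_l^{2[s]}\bigr)^{-1}(i\xi_j)^{[s]}$ on the annulus (one cannot divide by $\sum_l(i\xi_l)^{[s]}$, which may vanish); and (ii) after applying the $\sigma$-characterization to $g=\partial_{x_j}^{[s]}f$ your right-hand side still contains $\|\partial_{x_j}^{[s]}f\|_{L^p}$, which is absent from \eqref{alter definition}, so you must absorb it, e.g.\ via $\|\partial_{x_j}^{[s]}f\|_{L^p}\leq \eps\|f\|_{B^{s}_{p,q}}+C_{\eps}\|f\|_{L^p}$ (interpolation, using $[s]<s$, together with an a priori finiteness or density argument). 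With these additions your route is a complete, self-contained proof; citing \cite{b10,b11}, as the paper does, is of course sufficient for the purposes of the paper.
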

From theorem 1, p. 119 in Stein \cite{b14}, we have
\begin{lem}\label{HLS}(Hardy-Littlewood-Sobolev fractional integration inequality).
Let $N\geq1$, $0<s<N$, and $1<p<q<\infty$ be such that
$\frac{N}{q}=\frac{N}{p}-s$. \\
Then we have
\[\left\||\nabla|^{-s}f\right\|_{L^{q}_{x}(\mathbb{R}^{N})}\sim\left\|\int_{\mathbb{R}^{N}}\frac{f(x)}{|x-y|^{N-s}}dx\right\|_{L^{q}_{y}(\mathbb{R}^{N})}
\leq C_{N,s,p,q}\|f\|_{L^{p}_{x}(\mathbb{R}^{N})}\] for all $f\in
L^{p}_{x}(\mathbb{R}^{N})$.
\end{lem}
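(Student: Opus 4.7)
The plan is to prove the two halves of the statement separately: first the equivalence of the Riesz-potential representation with $|\nabla|^{-s}$, and then the strong-type $(p,q)$ bound. For the equivalence, I would invoke the fact that $|\nabla|^{-s}$ is the Fourier multiplier with symbol $|\xi|^{-s}$ and that the tempered distribution $|x|^{-(N-s)}$ has Fourier transform equal to a fixed constant $c_{N,s}=\pi^{N/2}2^{s}\Gamma(s/2)/\Gamma((N-s)/2)$ times $|\xi|^{-s}$, valid precisely in the range $0<s<N$. Convolution in physical space corresponds to multiplication in frequency, so
\[
|\nabla|^{-s}f(y)=c_{N,s}^{-1}\int_{\mathbb{R}^{N}}\frac{f(x)}{|x-y|^{N-s}}\,dx,
\]
in the sense of tempered distributions, giving the asserted equivalence up to a harmless constant.

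For the main inequality, I would use Hedberg's pointwise bound. Denote the Riesz potential by $I_{s}f(y):=\int f(x)|x-y|^{s-N}\,dx$ and split it dyadically (or simply at a scale $R>0$):
\[
I_{s}f(y)=\int_{|x-y|<R}\frac{f(x)}{|x-y|^{N-s}}\,dx+\int_{|x-y|\ge R}\frac{f(x)}{|x-y|^{N-s}}\,dx.
\]
The first piece is controlled by the Hardy--Littlewood maximal function via annular decomposition: each annulus $\{2^{-k-1}R\le |x-y|<2^{-k}R\}$ contributes at most $C(2^{-k}R)^{s}Mf(y)$, and summing a geometric series yields a bound of $CR^{s}Mf(y)$. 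The second piece is estimated by H\"{o}lder's inequality with exponent $p$: since $(N-s)p'>N$, the integral $\int_{|x-y|\ge R}|x-y|^{-(N-s)p'}\,dx$ converges and equals $C R^{N-(N-s)p'}$, producing a bound $CR^{s-N/p'}\|f\|_{L^{p}}=CR^{-N/q}\|f\|_{L^{p}}$ thanks to the scaling relation $N/q=N/p-s$.

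Optimizing the sum in $R$ (choose $R$ so that $R^{s}Mf(y)\sim R^{-N/q}\|f\|_{L^{p}}$) gives the Hedberg pointwise inequality
\[
|I_{s}f(y)|\le C\,(Mf(y))^{p/q}\,\|f\|_{L^{p}}^{1-p/q}.
\]
Taking $L^{q}$ norms in $y$ and invoking the Hardy--Littlewood maximal theorem $\|Mf\|_{L^{p}}\le C_{p,N}\|f\|_{L^{p}}$ (which requires $1<p<\infty$, consistent with the hypothesis) finishes the proof, with a constant $C_{N,s,p,q}$ depending continuously on the parameters. The only delicate point is ensuring that the cut-off exponent $(N-s)p'>N$ holds, i.e.\ that $p<N/s$; this is exactly guaranteed by $1<p<q<\infty$ together with $N/q=N/p-s$, so no extra hypothesis is needed. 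The main obstacle, if any, is purely bookkeeping: verifying that Hedberg's optimization produces the correct exponent $p/q$ and that the constants assemble cleanly; the weak-type endpoint $p=1$ (which does not occur here) would require a Marcinkiewicz interpolation detour that we happily avoid.
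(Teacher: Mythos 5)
Your proof is correct, and it is necessarily a different route from the paper's, because the paper does not prove this lemma at all: it simply quotes it as Theorem 1, p.~119 of Stein \cite{b14}. What you supply is the standard self-contained argument: the identification of $|\nabla|^{-s}f$ with the Riesz potential via the Fourier transform of $|x|^{-(N-s)}$ (valid exactly for $0<s<N$, with the normalizing constant you quote), then Hedberg's pointwise inequality $|I_{s}f(y)|\leq C\,(Mf(y))^{p/q}\|f\|_{L^{p}}^{1-p/q}$ obtained by splitting at a radius $R$ and optimizing, and finally the Hardy--Littlewood maximal theorem, which is precisely where the hypothesis $p>1$ enters. The exponent bookkeeping works: $s+N/q=N/p$ makes the optimized exponent equal to $p/q$, and the condition $(N-s)p'>N$, i.e.\ $p<N/s$, follows from $q<\infty$ together with $N/q=N/p-s$, as you note. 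One small slip to fix: the tail estimate should be $CR^{s-N/p}\|f\|_{L^{p}}$, not $CR^{s-N/p'}\|f\|_{L^{p}}$; indeed $\bigl(N-(N-s)p'\bigr)/p'=s-N/p=-N/q$, so your final expression $CR^{-N/q}\|f\|_{L^{p}}$ is the correct one and the optimization is unaffected. Compared with just citing Stein, your argument has the advantage of being elementary and of making transparent where each hypothesis ($0<s<N$, $p>1$, $q<\infty$, the scaling relation) is used; the citation, of course, is all the paper needs, since the lemma is classical.
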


\subsection{ Estimates of products in fractional order Sobolev spaces}
\begin{defn}
 Let $s\geq 0$. A homogeneous symbol of order $s$ is a symbol $m: \mathbb{R}^{N}\rightarrow \mathbb{C}$ which obeys the
 estimates: $|\nabla^{j}m(\xi)|\leq C_{j,s,N}|\xi|^{s-j}$ for all $j\geq 0$ and $\xi\neq 0$; and an inhomogeneous symbol of
 order $s$ is a symbol $m: \mathbb{R}^{N}\rightarrow \mathbb{C}$ which obeys the estimates: $|\nabla^{j}m(\xi)|\leq C_{j,s,N}\langle\xi\rangle^{s-j}$.
 The corresponding Fourier multipliers $m(D)$ are referred to as homogeneous and inhomogeneous Fourier multipliers of order $s$ respectively.
\end{defn}
Here we mainly consider the homogeneous Fourier multiplier $\dot{D}^{s}=|\nabla|^{s}$ and we need estimate of $\|\dot{D}^{s}(fg)\|_{L^{r}}$. Now let $0<s<1$ and $s_{1}, s_{2}>0$ such that $s_{1}+s_{2}=s$, applying Bony's paraproducts decomposition and Kato-Ponce type commutator identities(see \cite{b23,b12}), we have the fractional Leibnitz rule:
\[\dot{D}^{s}(fg)-(\dot{D}^{s}f)g-f(\dot{D}^{s}g)=\pi(|\nabla|^{s_{1}}f,|\nabla|^{s_{2}}g)\]
for some linear combination $\pi$ of Coifman-Meyer and residual paraproducts.

Thus one can deduce, from Moving derivatives rule and Coifman-Meyer and Residual multiplier theorem(see
\cite{b12}), the following product rule in fractional order Sobolev spaces(for the proof, refer to \cite{b20,b21}).
\begin{prop}\label{prop1}(Kato-Ponce inequality). Let $0<s<1$ and let $1<p_{1},q_{1},p_{2},q_{2},r<\infty$ be such that $\frac{1}{r}=\frac{1}{p_{1}}+\frac{1}{q_{1}}=\frac{1}{p_{2}}+\frac{1}{q_{2}}$. Then:
\[\|fg\|_{\dot{H}^{s,r}(\mathbb{R}^{N})}\leq C_{p_{1},p_{2},r,s,N}\left\{\|f\|_{\dot{H}^{s,p_{1}}(\mathbb{R}^{N})}\|g\|_{L^{q_{1}}
(\mathbb{R}^{N})}+\|f\|_{L^{p_{2}}(\mathbb{R}^{N})}\|g\|_{\dot{H}^{s,q_{2}}(\mathbb{R}^{N})}\right\}\]
for all \emph{Schwartz} $f,g$.
\end{prop}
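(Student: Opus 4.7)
My strategy is to prove the Kato--Ponce inequality via Bony's paraproduct decomposition of the product $fg$, combined with the Littlewood--Paley square-function characterization of $\dot{H}^{s,r}$ and the Fefferman--Stein vector-valued maximal inequality. First I would write
\[
fg = \pi_{1}(f,g) + \pi_{2}(f,g) + \pi_{3}(f,g),
\]
where $\pi_{1}(f,g) = \sum_{j} S_{<j-1}f \cdot \Delta_{j}g$ is the low-high paraproduct, $\pi_{2}(f,g) = \sum_{j} S_{<j-1}g \cdot \Delta_{j}f$ is the high-low paraproduct, and $\pi_{3}(f,g) = \sum_{|j-j'|\leq 1} \Delta_{j}f \cdot \Delta_{j'}g$ is the resonant remainder (as already recorded in the Bony decomposition cited above Lemma~\ref{lem7}).

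For $\pi_{1}(f,g)$, each summand has Fourier support in the annulus $\{|\xi| \sim 2^{j}\}$, because $\widehat{S_{<j-1}f}$ lives in $\{|\xi| \leq 2^{j-2}\}$ while $\widehat{\Delta_{j}g}$ lives in $\{2^{j-1} \leq |\xi| \leq 2^{j+1}\}$. Hence $|\nabla|^{s}$ acts on the $j$-th term essentially as multiplication by $2^{js}$. Using the Littlewood--Paley equivalence
\[
\||\nabla|^{s}h\|_{L^{r}} \sim \Bigl\| \bigl(\sum_{k} 2^{2ks}|\Delta_{k}h|^{2}\bigr)^{1/2} \Bigr\|_{L^{r}},
\]
the pointwise bound $|S_{<j-1}f(x)| \lesssim \mathcal{M}f(x)$ (Hardy--Littlewood maximal function), H\"older's inequality with $\tfrac{1}{r} = \tfrac{1}{p_{2}} + \tfrac{1}{q_{2}}$, the $L^{p_{2}}$ boundedness of $\mathcal{M}$ (valid because $p_{2}>1$), and the Fefferman--Stein vector-valued maximal inequality, I would obtain
\[
\||\nabla|^{s}\pi_{1}(f,g)\|_{L^{r}} \lesssim \|f\|_{L^{p_{2}}} \|g\|_{\dot{H}^{s,q_{2}}}.
\]
By symmetry (swapping $f$ and $g$ and using $\tfrac{1}{r} = \tfrac{1}{p_{1}} + \tfrac{1}{q_{1}}$), the analogous estimate for $\pi_{2}$ yields $\|f\|_{\dot{H}^{s,p_{1}}} \|g\|_{L^{q_{1}}}$.

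The main obstacle is the resonant remainder $\pi_{3}(f,g)$: its summands $\Delta_{j}f \cdot \Delta_{j'}g$ with $|j-j'|\leq 1$ are Fourier-supported in $\{|\xi| \lesssim 2^{j+1}\}$ with \emph{no} matching lower bound, so $|\nabla|^{s}$ does not pick up a clean factor of $2^{js}$. I would handle this by observing that $\Delta_{k}\pi_{3}(f,g)$ receives contributions only from $j \geq k - O(1)$, because otherwise the Fourier support of the summand is disjoint from $\mathrm{supp}\,\widehat{\varphi}(2^{-k}\cdot)$; this gives
\[
|\Delta_{k}\pi_{3}(f,g)(x)| \lesssim \sum_{j \geq k-2} |\Delta_{j}f \cdot \widetilde{\Delta}_{j}g|(x),
\]
where $\widetilde{\Delta}_{j} := \Delta_{j-1}+\Delta_{j}+\Delta_{j+1}$. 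Multiplying by $2^{ks}$ and writing $2^{ks} = 2^{(k-j)s}\cdot 2^{js}$, the hypothesis $s>0$ supplies a geometric decay in $k-j$, so a Schur-type summation in $(k,j)$ followed by the Fefferman--Stein inequality reduces the estimate to controlling $\bigl\|(\sum_{j} 2^{2js}|\Delta_{j}f|^{2}|\widetilde{\Delta}_{j}g|^{2})^{1/2}\bigr\|_{L^{r}}$, which splits via H\"older exactly as in the $\pi_{1}$ (or $\pi_{2}$) estimate depending on which factor carries the derivative. The positivity $s>0$ is what makes the sum over $k\leq j$ converge and is therefore the crux of the argument; the constraint $s<1$ is not essential at this level but is the natural range where the fractional Leibniz rule is not already reduced to the classical one by extracting integer-order derivatives first.
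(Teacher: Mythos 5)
Your plan is correct, and it is worth noting that the paper does not actually prove Proposition \ref{prop1}: it records Bony's decomposition and a commutator identity $\dot{D}^{s}(fg)-(\dot{D}^{s}f)g-f(\dot{D}^{s}g)=\pi(|\nabla|^{s_{1}}f,|\nabla|^{s_{2}}g)$ with $s_{1}+s_{2}=s$, and then defers to the Coifman--Meyer and residual multiplier theorems of \cite{b12} and to the references \cite{b20,b21} for the details. You share the same starting point (the three Bony pieces $\pi_{1},\pi_{2},\pi_{3}$), but from there your route is genuinely different and more self-contained: instead of distributing fractional derivatives between the two factors and invoking bilinear multiplier theorems, you keep the full $|\nabla|^{s}$ on one factor per term, use the Littlewood--Paley characterization $\|\,|\nabla|^{s}h\|_{L^{r}}\sim\bigl\|(\sum_{k}2^{2ks}|\Delta_{k}h|^{2})^{1/2}\bigr\|_{L^{r}}$, the pointwise bounds $|S_{<j-1}f|\lesssim\mathcal{M}f$ and $|\Delta_{k}h|\lesssim\mathcal{M}h$ (uniform in $k$, needed before Fefferman--Stein), H\"older with the prescribed exponent splittings, and the Fefferman--Stein vector-valued maximal inequality; for $\pi_{3}$ the observation that $\Delta_{k}\pi_{3}$ only sees $j\geq k-O(1)$ together with the factor $2^{(k-j)s}$ and $s>0$ gives the required Schur summation. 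This buys you an elementary proof using only classical square-function and maximal-function tools, and, as you correctly observe, it dispenses with the hypothesis $s<1$ (the estimate holds for all $s>0$ in this exponent range); what the paper's citation-based route buys is brevity and the commutator refinement (an explicit remainder after subtracting both Leibniz terms), which your argument does not produce but which the paper never uses beyond the stated product estimate.
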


\subsection{Properties of the Schr\"{o}dinger group $(e^{it\Delta})_{t\in\mathbb{R}}$}
We denote by $(e^{it\Delta})_{t\in\mathbb{R}}$ the Schr\"{o}dinger group, which is isometric on $H^{s}$ and $\dot{H}^{s}$ for every $s\geq0$, and satisfies the Dispersive estimate and Strichartz's estimates(for more details, see Keel and Tao \cite{Tao1}). We will use freely the well-known properties of of the Schr\"{o}dinger group $(e^{it\Delta})_{t\in\mathbb{R}}$(see \cite{b6} for an account of these properties). Here we only mention specially the Strichartz's estimates for non-admissible pair as below, which can be found in \cite{b6,Tao2}.
\begin {lem}\label{lem9}
(Strichartz's estimates for non-admissible pairs). Let $I$ be an interval of $\mathbb{R}$ (bounded or not), set $J=\bar{I}$, let $t_{0}\in J$, and consider $\Phi$ defined by: $t\mapsto \Phi_{f}(t)=\int^{t}_{t_{0}}e^{i(t-s)\Delta}f(s)ds$ for $t\in I$. Assume $2<r<2N/(N-2)$($2<r\leq \infty$ if $N=1$) and let $1<a,\tilde{a}<\infty$ satisfy \[\frac{1}{\tilde{a}}+\frac{1}{a}=\delta(r)=N(\frac{1}{2}-\frac{1}{r}).\]
It follows that $\Phi_{f}\in L^{a}(I,L^{r}(\mathbb{R}^{N}))$ for every $f\in L^{\tilde{a}'}(I,L^{r'}(\mathbb{R}^{N}))$. Moreover, there exists a constant C independent of $I$ such that
\[\|\Phi_{f}\|_{L^{a}(I,L^{r})}\leq C\|f\|_{L^{\tilde{a}'}(I,L^{r'})},\]
for every $f\in L^{\tilde{a}'}(I,L^{r'}(\mathbb{R}^{N}))$.
\end{lem}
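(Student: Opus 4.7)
The plan is to derive this non-admissible Strichartz estimate from two ingredients: the pointwise-in-time dispersive estimate $\|e^{i\tau\Delta}g\|_{L^r_x}\leq C|\tau|^{-\delta(r)}\|g\|_{L^{r'}_x}$ for the Schr\"odinger group, and the one-dimensional Hardy--Littlewood--Sobolev inequality (Lemma \ref{HLS}) applied in the time variable. Since the hypotheses force $\delta(r)\in(0,1)$, no Littlewood--Paley machinery is needed; the entire argument reduces to a Riesz-potential estimate in $t$ after taking $L^{r'}_x\to L^r_x$ bounds under the integral sign.

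First, for fixed $t\in I$, I would apply the dispersive estimate to the integrand of $\Phi_f(t)$ and use Minkowski's inequality to obtain
\[
\|\Phi_f(t)\|_{L^r_x}\;\leq\;\int_I \|e^{i(t-s)\Delta}f(s)\|_{L^r_x}\,ds\;\leq\;C\int_I |t-s|^{-\delta(r)}\|f(s)\|_{L^{r'}_x}\,ds.
\]
Setting $F(s):=\|f(s)\|_{L^{r'}_x}$, the right-hand side is a one-dimensional Riesz potential of order $1-\delta(r)$ acting on $F$. The hypothesis $1/\tilde{a}+1/a=\delta(r)$ is exactly the Hardy--Littlewood--Sobolev scaling identity $1/\tilde{a}'+\delta(r)=1+1/a$; together with $0<\delta(r)<1$ (which follows from $2<r<2N/(N-2)$, or $2<r\leq\infty$ if $N=1$) and $1<a,\tilde{a}<\infty$, this places $\tilde{a}'$ and $a$ strictly inside the range of validity of HLS.

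Invoking Lemma \ref{HLS} in dimension one then yields
\[
\Bigl\|\,\int_I |t-s|^{-\delta(r)}F(s)\,ds\,\Bigr\|_{L^a(I)}\;\leq\;C\,\|F\|_{L^{\tilde{a}'}(I)}\;=\;C\,\|f\|_{L^{\tilde{a}'}(I,L^{r'})}.
\]
Taking the $L^a(I)$ norm in $t$ of the pointwise bound and combining with this inequality produces the conclusion with a constant $C$ depending only on $N,r,a,\tilde{a}$ and independent of $I$ and $t_0$, as required.

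The only delicate point is the excluded endpoints $r=2$ and $r=2N/(N-2)$: they correspond to $\delta(r)=0$ or $1$ and would force one of $a,\tilde{a}'$ to lie outside $(1,\infty)$, where HLS fails. The classical admissible Strichartz estimate is recovered in the special case $a=\tilde{a}$ (so that $2/a=\delta(r)$), and the freedom to decouple $a$ from $\tilde{a}$ is precisely what will be exploited later when estimating terms like $\|g_1(u)-g_1(v)\|_{\dot{B}^{s}_{r,2}}$ with $\frac{2N}{N+2}\leq r\leq 2$ in the critical-case part of Theorem \ref{th1}.
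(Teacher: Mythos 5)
Your argument is correct: the dispersive bound $\|e^{i\tau\Delta}g\|_{L^r}\leq C|\tau|^{-\delta(r)}\|g\|_{L^{r'}}$ plus Minkowski reduces the claim to a one-dimensional fractional integration estimate, and the hypotheses $2<r<2N/(N-2)$ and $1/a+1/\tilde{a}=\delta(r)$ with $1<a,\tilde{a}<\infty$ place $0<\delta(r)<1$ and $1<\tilde{a}'<a<\infty$ exactly in the range where Lemma \ref{HLS} (applied in $t$, with $f$ extended by zero off $I$ so the constant is independent of $I$ and $t_0$) applies. The paper itself does not prove this lemma but quotes it from \cite{b6,Tao2}, and your proof is precisely the standard argument given there, so there is nothing to add.
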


\section{PROOF OF THEOREM \ref{th1}}
In this section we will assume $N\geq1$, $s>1$ or $s\geq N/2$. Let $g(u)=g_{0}(u)+g_{1}(u)$, where $g_{0}(u)=\lambda_{0}u$ with $\lambda_{0}\in\mathbb{C}$ and $g_{1}$ is of class $\mathcal{C}(\alpha,s)$(see Definition \ref{class}). \\

\emph{Proof of Theorem \ref{th1}.}

It follows from \cite{b6}(see also \cite{b5,b7}) that given $\phi\in H^{s}(\mathbb{R}^{N})$,
there exist $T_{max},T_{min}\in (0,\infty]$ such that Cauchy problem
(\ref{eq1}) has a unique, maximal solution $u\in
C((-T_{min},T_{max}),H^{s}(\mathbb{R}^{N}))$. There exists
$0<T<T_{max},T_{min}$ such that if $\phi_{n}\rightarrow \phi$ in
$H^{s}(\mathbb{R}^{N})$ and if $u_{n}$ denotes the solution of
(\ref{eq1}) with the initial value $\phi_{n}$, then
$\|\phi_{n}\|_{H^{s}}\leq 2\|\phi\|_{H^{s}}$ for $n$ large, and we
have $0<T<T_{max}(\phi_{n}),T_{min}(\phi_{n})$ for all sufficiently
large $n$ and $u_{n}$ is bounded in
$L^{q}((-T,T),B^{s}_{r,2}(\mathbb{R}^{N}))$ for any admissible pair
$(q,r)$.

Since $u, u_{n}$ satisfy integral equation
\[u(t)=e^{it\Delta}\phi-i\int^{t}_{0}e^{i(t-\tau)\Delta}g(u(\tau)) d\tau,\]
\[u_{n}(t)=e^{it\Delta}\phi_{n}-i\int^{t}_{0}e^{i(t-\tau)\Delta}g(u_{n}(\tau)) d\tau,\]
respectively. It follows directly that
\begin{equation}\label{eq8}
    u_{n}(t)-u(t)=e^{it\Delta}(\phi_{n}-\phi)-i\int^{t}_{0}e^{i(t-\tau)\Delta}[g(u_{n}(\tau))-g(u(\tau))] d\tau.
\end{equation}
Moreover, $u_{n}\rightarrow u$ in $L^{q}((-T,T),L^{r}(\mathbb{R}^{N}))$ as $n\rightarrow \infty$. In particular,
\begin{equation}\label{weak convergence}
    u_n\rightarrow u \,\,\,\, in \,\,\,\, L^{q}((-T,T),B^{s-\varepsilon}_{r,2}(\mathbb{R}^{N}))\cap C([-T,T],H^{s-\varepsilon}(\mathbb{R}^{N}))
\end{equation}
for all admissible pair $(q,r)$ and $\varepsilon>0$. As a consequence of (\ref{weak convergence}), we can deduce easily from Proposition \ref{prop2} that, if $s<\frac{N}{2}$,
\begin{equation}\label{equa100}
    u_{n}\rightarrow u \quad in \quad L^{\gamma-\delta}((-T,T),H^{j,\,\rho^{\ast}_{j}})
\end{equation}
as $n\rightarrow \infty$, for all $0\leq j\leq [s]$ and $0<\delta\leq \gamma-1$,
and
\begin{equation}\label{equa101}
    u_{n}\rightarrow u \quad in \quad L^{p}((-T,T),H^{j,\,2^{\ast}_{j}})
\end{equation}
as $n\rightarrow \infty$, for all $0\leq j\leq [s]$ and $1\leq p< \infty$, where $\frac{1}{2^{\ast}_{j}}=\frac{1}{2}-\frac{s-j}{N}$.
Indeed, given $\eta>0$ and small, by Proposition \ref{prop2}, we have $B^{s-\frac{\eta N}{\rho(\rho+\eta)}}_{\rho+\eta,2}\hookrightarrow H^{j,\,\rho^{\ast}_{j}}$ for all $0\leq j\leq [s]$. If $\eta$ is sufficiently small, $\rho+\eta<\frac{2N}{N-2}$ so that there exists $\gamma_{\eta}$ such that $(\gamma_{\eta},\rho+\eta)$ is an admissible pair, and we deduce from (\ref{weak convergence}) that $u_{n}\rightarrow u$ in $L^{\gamma_{\eta}}((-T,T),H^{j,\,\rho^{\ast}_{j}})$ for all $0\leq j\leq [s]$. Since $\gamma_{\eta}\rightarrow \gamma$ as $\eta\rightarrow 0$, we conclude that (\ref{equa100}) holds. The convergence (\ref{equa101}) can be obtained similarly.

We are to prove that as $n\rightarrow \infty$, for every admissible pair $(q,r)$,
\begin{equation}\label{equa101*}
u_{n}\rightarrow u\,\,\,\, {\rm in} \,\,L^{q}((-T,T),B^{s}_{r,2}(\mathbb{R}^{N})).
\end{equation}

Suppose that this is done then Theorem \ref{th1} follows by iterating this property in order to cover any compact subset of $(-T_{min},T_{max})$ in the subcritical case or a standard compact argument in the critical case. To show (\ref{equa101*}), let us define $I=(-T,T)$, and
$$X_{n}(I):=\|u_{n}(t)-u(t)\|_{L^{\infty}(I,H^{s})}+\|u_{n}(t)-u(t)\|_{L^{\gamma}(I,B^{s}_{\rho,2})}.$$

We will carry out our proof of \eqref{equa101*} by discussing the following four cases respectively:

\noindent Case I.\,\,\,\,$1<s<N/2$ and $\alpha<\frac{4}{N-2s}$;

\noindent Case II.\,\,$1<s<N/2$ and $\alpha=\frac{4}{N-2s}$;

\noindent Case III.\, The borderline case $s=N/2$, $\alpha<\infty$;

\noindent Case IV.\,\,\,$s> N/2$.\\

Let us start with Case I. $1<s<N/2$ and $\alpha<\frac{4}{N-2s}$.\\

First, we will prove a refinement of the convergence \eqref{equa100}.
\begin{lem}\label{lem6}
If we assume further that $\alpha> \frac{4\{s\}}{N-2s}$, then as $n\rightarrow\infty$,
\begin{equation}\label{equa106}
    u_{n}\rightarrow u \quad in \quad L^{\gamma}((-T,T),H^{j,\rho^{\ast}_{j}}(\mathbb{R}^{N}))
\end{equation}
for every $0\leq j\leq [s]$.
\end{lem}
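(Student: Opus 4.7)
The plan is to reduce Lemma \ref{lem6} to its boundary case $j = 0$ by a Gagliardo--Nirenberg interpolation, and then to prove the $j = 0$ statement by combining the Duhamel representation with Strichartz estimates for non-admissible pairs (Lemma \ref{lem9}) on an intermediate Besov space of order strictly below $s$, in the spirit of Tao and Visan.

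Set $v_n := u_n - u$. From the proof of Proposition \ref{prop2}(v) one has the pointwise Gagliardo--Nirenberg inequality
\[
\|f\|_{\dot{H}^{j,\rho^{\ast}_{j}}} \leq C\,\|f\|_{\dot{H}^{s,\rho}}^{j/s}\,\|f\|_{L^{\rho^{\ast}_{0}}}^{1-j/s}, \qquad 0 < j \leq [s].
\]
Raising to the power $\gamma$, integrating over $I := (-T,T)$, and applying H\"older in $t$ with both time exponents taken equal to $\gamma$ yields
\[
\|v_n\|_{L^\gamma(I,\dot{H}^{j,\rho^{\ast}_{j}})} \leq C\,\|v_n\|_{L^\gamma(I,\dot{H}^{s,\rho})}^{j/s}\,\|v_n\|_{L^\gamma(I,L^{\rho^{\ast}_{0}})}^{1-j/s}.
\]
The first factor is uniformly bounded in $n$ because $v_n \in L^\gamma(I, B^{s}_{\rho,2}) \hookrightarrow L^\gamma(I, \dot{H}^{s,\rho})$ (since $\rho > 2$), and the inhomogeneous norm $H^{j,\rho^{\ast}_{j}}$ differs from $\dot{H}^{j,\rho^{\ast}_{j}}$ only by an $L^{\rho^{\ast}_{0}}$ term. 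Consequently the full statement \eqref{equa106} for $0 < j \leq [s]$ reduces at once to the case $j = 0$.

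For the $j = 0$ case write Duhamel,
\[
v_n(t) = e^{it\Delta}(\phi_n - \phi) - i \int_0^t e^{i(t-\tau)\Delta}\,[g(u_n) - g(u)](\tau)\, d\tau.
\]
The linear piece is bounded in $L^\gamma(I, L^{\rho^{\ast}_{0}})$ by $C\|\phi_n - \phi\|_{H^s} \to 0$, via the Sobolev embedding $\dot{H}^{s,\rho} \hookrightarrow L^{\rho^{\ast}_{0}}$ composed with the admissible Strichartz estimate applied to $|\nabla|^s e^{it\Delta}(\phi_n - \phi)$. For the Duhamel term I would apply Lemma \ref{lem9} with a non-admissible pair $(\gamma, r)$ where $r$ is chosen slightly below $\rho^{\ast}_{0}$ so that $2 < r < \frac{2N}{N-2}$ and the dual exponent $\tilde{a}$, determined by $\frac{1}{\tilde{a}} + \frac{1}{\gamma} = \delta(r)$, lies in $(1,\infty)$. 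This yields a bound on $\|\int_0^t e^{i(t-\tau)\Delta} [g(u_n) - g(u)]\, d\tau\|_{L^\gamma(I, L^r)}$ by $C\|g(u_n) - g(u)\|_{L^{\tilde{a}'}(I, L^{r'})}$, and the right-hand side is controlled via the pointwise estimate $|g(u_n) - g(u)| \lesssim (|u_n|^\alpha + |u|^\alpha)|v_n|$, H\"older's inequality, the uniform $L^\gamma(I, B^{s}_{\rho,2})$ bounds on $u_n, u$, and the $L^{\gamma - \delta}(I, L^{\rho^{\ast}_{0}})$ convergence of $v_n$ already furnished by \eqref{equa100}. Upgrading from $L^\gamma(I, L^r)$ back to the target $L^\gamma(I, L^{\rho^{\ast}_{0}})$ is then standard interpolation against the uniform $L^\gamma(I, L^{\rho^{\ast}_{0}})$ bound coming from $B^{s}_{\rho,2} \hookrightarrow L^{\rho^{\ast}_{0}}$.

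The main obstacle I foresee is the index book-keeping in the nonlinear estimate: the hypothesis $\alpha > \frac{4\{s\}}{N-2s}$ is equivalent, via Proposition \ref{prop2}(iv), to $\{s\} < (\alpha + 2)/\gamma$, and this is precisely the numerical slack needed to keep $\tilde{a}$ bounded away from $1$ while matching the H\"older pairing of $|u|^\alpha$ against $v_n$ in Lebesgue spaces where their integrability is already controlled. Measuring the nonlinear source in the lower-order Lebesgue space $L^{r'}$ rather than the critical $\dot{H}^{s,\rho'}$ is what avoids a circular appeal to the full Strichartz estimate $\|g(u_n) - g(u)\|_{L^{\gamma'}(I, \dot{H}^{s,\rho'})} \to 0$ that is the eventual target of Theorem \ref{th1}.
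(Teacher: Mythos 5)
Your reduction of the lemma to the case $j=0$ via the Gagliardo--Nirenberg inequality is legitimate (modulo the small slip that the inhomogeneous norm $H^{j,\rho^{\ast}_{j}}$ contributes an $L^{\rho^{\ast}_{j}}$ term, not an $L^{\rho^{\ast}_{0}}$ term; this is repairable by interpolating in space between $L^{\rho}$ and $L^{\rho^{\ast}_{0}}$). The proof you then give for $j=0$, however, breaks at two points. First, Lemma \ref{lem9} requires the output spatial exponent to satisfy $2<r<\frac{2N}{N-2}$, whereas here $\rho^{\ast}_{0}=\frac{N(\alpha+2)}{N-2s}>\frac{2N}{N-2s}>\frac{2N}{N-2}$ because $s>1$; so there is no $r$ ``slightly below $\rho^{\ast}_{0}$'' inside the allowed range, and with no derivatives on the left-hand side no Strichartz-type estimate can produce integrability anywhere near $\rho^{\ast}_{0}$. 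This is precisely why the paper estimates $|\nabla|^{[s]}(u_{n}-u)$ in $L^{a}(I,L^{\tau})$ with $\tau$ in the admissible-type range (the $\{s\}$-shift in $\frac{2}{a}=\frac{N}{2}-\frac{N}{\tau}-\{s\}$ supplies the missing integrability) and only afterwards descends to the spaces $H^{j,\rho^{\ast}_{j}}$ by Sobolev embedding.

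Second, the closing step ``upgrade from $L^{\gamma}(I,L^{r})$ to $L^{\gamma}(I,L^{\rho^{\ast}_{0}})$ by interpolation against the uniform $L^{\gamma}(I,L^{\rho^{\ast}_{0}})$ bound'' is an endpoint interpolation and is false: convergence in a strictly weaker Lebesgue norm together with boundedness in the target norm does not give convergence in the target norm (consider $n^{N/\rho^{\ast}_{0}}\chi_{B(0,1/n)}$), and since $B^{s}_{\rho,2}$ scales exactly like $L^{\rho^{\ast}_{0}}$ there is no uniform bound strictly stronger than the target with which to create room. A telling sanity check: if that step were valid, Lemma \ref{lem6} would already follow from \eqref{equa100} and the uniform Strichartz bounds, with no use of the hypothesis $\alpha>\frac{4\{s\}}{N-2s}$ at all. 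In the paper that hypothesis is what permits the choice of $\tau$ and $\eta_{0}$ so that the Duhamel estimate places a positive power $\eta_{0}$ on $\|\langle\nabla\rangle^{[s]}(u_{n}-u)\|_{L^{p}(I,L^{2N/(N-2\{s\})})}$, which tends to zero by \eqref{equa101}; the target point $(\gamma,\rho^{\ast}_{[s]})$ then lies strictly between the proven-convergent point and uniformly bounded points on the same shifted Strichartz line, so the interpolation performed there is not an endpoint one. Your identity $\{s\}<(\alpha+2)/\gamma$ is correct arithmetic, but as organized your argument never exploits it, and the two steps above cannot be salvaged without reinstating derivatives on the difference, i.e.\ essentially reverting to the paper's scheme.
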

\begin{proof}
To prove Lemma \ref{lem6}, we resort to Strichartz's estimates for non-admissible pairs(see Lemma \ref{lem9}) and the fact $u_{n}\rightarrow u$ in $L^{p}((-T,T),H^{[s],2^{\ast}_{[s]}})$ for every $1\leq p<\infty$(see \eqref{equa101}). One can easily verify that the Dispersive and Strichartz's estimates for $(e^{it\Delta})_{t\in\mathbb{R}}$ and $(e^{it(\Delta-\lambda_{0})})_{t\in\mathbb{R}}$ are the same modulo at most a factor $e^{|\Im\lambda_{0}|T}$(see \cite{O}), thus we may assume $\lambda_{0}=0$ here without loss of generality.

For any $2\leq x<\frac{2N}{N-2}$ and $0<\eta\leq 1$, let $(y,x)$ be an admissible pair, we define index $\tau_{x,\eta}$ by:
\[\frac{1}{\tau_{x,\eta}}=1-\alpha(\frac{1}{x}-\frac{s}{N})-(1-\eta)(\frac{1}{x}-\frac{\{s\}}{N})-\eta(\frac{1}{2}-\frac{\{s\}}{N}).\]
Note that for $\alpha> \frac{4\{s\}}{N-2s}$, $1/\tau_{x,\eta}\rightarrow 1-\alpha(\frac{1}{2}-\frac{s}{N})-(\frac{1}{2}-\frac{\{s\}}{N})<\frac{N-2\{s\}}{2N}$ as $\eta\uparrow 1$ and $x\downarrow 2$, $1/\tau_{x,\eta}\rightarrow 1-\frac{[s]}{N}-(\alpha+1)(\frac{1}{2}-\frac{1}{N}-\frac{s}{N})^{+}>1/\rho^{\ast}_{[s]}>\frac{N-2\{s\}-2}{2N}$ as $\eta\downarrow 0$ and $x\uparrow \min\{\frac{2N}{N-2},\frac{N}{s}\}$, so we can choose $2\leq x=\rho_{1}<\min\{\frac{2N}{N-2},\frac{N}{s}\}$, $y=\gamma_{1}$ and $0<\eta=\eta_{0}\leq 1$ such that $\frac{2N}{N-2\{s\}}<\tau=\tau_{\rho_{1},\eta_{0}}<\rho^{\ast}_{[s]}$, and there exists some $2<\tilde{\tau}<\rho$ such that $\frac{1}{\tau}=\frac{1}{\tilde{\tau}}-\frac{\{s\}}{N}$, which implies $B^{\{s\}}_{\tilde{\tau},2}(\mathbb{R}^{N})\hookrightarrow L^{\tau}(\mathbb{R}^{N})$.

Let $\frac{2}{a}=\frac{N}{2}-\frac{N}{\tau}-\{s\}$ and
$\frac{2}{\tilde{a}}=\frac{N}{2}-\frac{N}{\tau}+\{s\}$. Then
$(a,\tilde{\tau})$ is an admissible pair and
\begin{equation}\label{equa107}
    \frac{1}{\tau'}=\alpha(\frac{1}{\rho_{1}}-\frac{s}{N})+(1-\eta_{0})(\frac{1}{\rho_{1}}-\frac{\{s\}}{N})+\eta_{0}(\frac{1}{2}-\frac{\{s\}}{N}),
\end{equation}
\begin{equation}\label{equa108}
    \frac{1}{\tilde{a}'}=\frac{\alpha+1-\eta_{0}}{\gamma_{1}}+1-\frac{\alpha(N-2s)}{4}.
\end{equation}
Set $p=\frac{4}{4-\alpha(N-2s)}$, then $p\in (1,\infty)$. We recall the formula
\begin{equation*}
\begin{array}{ll}
\||\nabla|^{[s]}(g(u_{n})-g(u))\|_{L^{\tau'}}&\\
\leq \sum_{k=1}^{[s]}\sum_{|\beta_1|+\cdots
+|\beta_k|=[s],|\beta_j|\geq 1}\{\|(g_{z\backslash \bar{z}}^{(k)}(u_n)-g_{z\backslash
\bar{z}}^{(k)}(u))\prod_{j=1}^k\partial^{\beta_j}(u\backslash \bar{u})\|_{L^{\tau'}}&\\
\qquad+\sum_{(w_{1},\cdots,w_{k})}\|g_{z\backslash\bar{z}}^{(k)}(u_{n})\prod_{j=1}^k\partial^{\beta_j}w_j\|_{L^{\tau'}}\},&
\end{array}
\end{equation*}
where all the $w_j$'s are equal to $u_{n},\bar{u}_{n}$ or $u,\bar{u}$, except one which is equal to $u_{n}-u$ or its conjugate. Since $g_{1}$ is of class $\mathcal{C}(\alpha,s)$, we deduce from the above formula, (\ref{class1}), Strichartz's estimates for non-admissible pairs(Lemma \ref{lem9}), the indices and embedding relationships obtained above and H\"{o}lder's inequality that
\begin{equation}\label{nonadmissible-strichartz}
\begin{array}{ll}
  &\||\nabla|^{[s]}(u_{n}-u)\|_{L^{a}(I,L^{\tau})}\leq C\|\phi_{n}-\phi\|_{H^{s}} \\
   &+ CT^{\frac{1-\eta_{0}}{p}}(\|u_{n}\|^{\alpha+1-\eta_{0}}_{L^{\gamma_{1}}(I,B^{s}_{\rho_{1},2})}+
   \|u\|^{\alpha+1-\eta_{0}}_{L^{\gamma_{1}}(I,B^{s}_{\rho_{1},2})})\|\langle\nabla\rangle^{[s]}(u_{n}-u)\|^{\eta_{0}}_{L^{p}(I,L^{\frac{2N}{N-2\{s\}}})}.
\end{array}
\end{equation}
Therefore, from (\ref{nonadmissible-strichartz}) and the following facts,
\[\|u\|_{L^{\gamma_{1}}(I,B^{s}_{\rho_{1},2})}+\sup_{n\geq 1}\|u_{n}\|_{L^{\gamma_{1}}(I,B^{s}_{\rho_{1},2})}< \infty,\]
\[\phi_{n}\rightarrow \phi\,\,\, in \,\,H^{s},\,\,\,\, u_{n}\rightarrow u\,\, in \,\,L^{p}((-T,T),H^{[s],2^{\ast}_{[s]}}),\]
we infer that
\begin{equation*}
    \||\nabla|^{[s]}(u_{n}-u)\|_{L^{a}((-T,T),L^{\tau}(\mathbb{R}^{N}))}\rightarrow 0, \quad as \quad n\rightarrow \infty.
\end{equation*}
Note that $\frac{2}{a}=\frac{N}{2}-\frac{N}{\tau}-\{s\}$ and $\frac{2}{\gamma}=\frac{N}{2}-\frac{N}{\rho^{\ast}_{[s]}}-\{s\}$, then $u_{n}\rightarrow u$ in $L^{\gamma}((-T,T),H^{[s],\rho^{\ast}_{[s]}})$ as $n\rightarrow \infty$ can be obtained from interpolation easily. In fact, we have $u_{n}\rightarrow u$ in $L^{q}((-T,T),H^{[s],\frac{rN}{N-r\{s\}}})$ as $n\rightarrow \infty$ for every $r\in( 2,\frac{2N}{N-2})$ and admissible pair $(q,r)$. Note that by Proposition \ref{prop2}, we have embedding $H^{j,\rho^{\ast}_{j}}(\mathbb{R}^{N})\hookrightarrow H^{[s],\rho^{\ast}_{[s]}}(\mathbb{R}^{N})$ for every $0\leq j\leq[s]$, this closes our proof.
\end{proof}

We will discuss Case I by the following two subcases respectively.
\[
\left\{
\begin{array}{ll}
subcase\,1:\,\,\, 1<s\leq 2 \,\,\, and \,\,\, s<N/2,&\\
\,&\\
subcase\,2:\,\,\, 2<s<N/2.&
\end{array}
 \right.
\]

First, we consider simpler subcase 1: $1<s\leq 2$ and $s<N/2$. \\

By Strichartz's estimate, we have
\begin{equation}\label{stri}
\begin{array}{ll}
 X_{n}&=\|u_{n}(t)-u(t)\|_{L^{\infty}(I,H^{s})}
+\|u_{n}(t)-u(t)\|_{L^{\gamma}(I,B^{s}_{\rho,2})}\\
&\leq
C\|\phi_{n}-\phi\|_{H^{s}}+CT\|u_{n}(t)-u(t)\|_{L^{\infty}(I,H^{s})}+E_{1}+E_{2},
\end{array}
\end{equation}
where $E_1=\|g_{1}(u_{n})-g_{1}(u)\|_{L^{\gamma'}(I,L^{\rho'})}$, $E_2=\|g_{1}(u_{n})-g_{1}(u)\|_{L^{\gamma'}(I,\dot{B}^{s}_{\rho',2})}$.

For $E_1$, we have
\begin{equation}\label{I2-1}
E_1\leq
CT^{\frac{4-\alpha(N-2s)}{4}}(\|u_{n}\|^{\alpha}_{L^{\gamma}(I,{B^{s}_{\rho,2}})}+\|u\|^{\alpha}_{L^{\gamma}(I,{B^{s}_{\rho,2}})})\|u_{n}
-u\|_{L^{\gamma}(I,{L^{\rho}})}.
\end{equation}

As to $E_2$, noting that $\dot{H}^{s,\rho'}\hookrightarrow
\dot{B}^{s}_{\rho',2}$, we have
\begin{equation}\label{II2-1}
\|g_{1}(u_{n})-g_{1}(u)\|_{\dot{B}^{s}_{\rho',2}}\leq
C\||\nabla|^{s}(g_{1}(u_{n})-g_{1}(u))\|_{L^{\rho'}},
\end{equation}
and
\begin{equation}\label{II2-2}
\begin{array}{ll}
\qquad\||\nabla|^{s}(g_{1}(u_{n})-g_{1}(u))\|_{L^{\rho'}}&\\
\qquad =\||\nabla|^{\{s\}}[\nabla
u_{n}g_{1z}(u_{n})+\nabla\bar{u}_{n}g_{1\bar{z}}(u_{n})-\nabla
ug_{1z}(u)
-\nabla\bar{u}g_{1\bar{z}}(u)]\|_{L^{\rho'}}&\\
\qquad\leq
\||\nabla|^{\{s\}}[g_{1z}(u_{n})\nabla(u_{n}-u)]\|_{L^{\rho'}}
+\||\nabla|^{\{s\}}[(g_{1z}(u_{n})-g_{1z}(u))\nabla u]\|_{L^{\rho'}} &\\
\qquad\qquad+\||\nabla|^{\{s\}}[g_{1\bar{z}}(u_{n})\nabla(\bar{u}_{n}-\bar{u})]
\|_{L^{\rho'}}+\||\nabla|^{\{s\}}[(g_{1\bar{z}}(u_{n})-g_{1\bar{z}}(u))\nabla \bar{u}]\|_{L^{\rho'}}&\\
\qquad=:A_1+A_2+A_3+A_4.&
\end{array}
\end{equation}

In the sequel we apply Proposition \ref{prop2} and Proposition \ref{prop1} to estimate $A_1,A_2,A_3,A_4$. \\

Since $g_{1}$ is of class $\mathcal{C}(\alpha,s)$, we deduce from (\ref{class1}), Proposition \ref{prop2} and
Proposition \ref{prop1} that
\begin{eqnarray*}
A_1&=&\||\nabla|^{\{s\}}[g_{1z}(u_{n})\nabla(u_{n}-u)]\|_{L^{\rho'}}\\
&\leq&C\{\||u_{n}|^{\alpha}\|_{L^{\rho^{\ast}_{0}/\alpha}}\|u_{n}
-u\|_{\dot{H}^{s,\rho}}+\|g_{1z}(u_{n})\|_{\dot{H}^{\{s\},\sigma}}
\|\nabla(u_{n}-u)\|_{L^{\rho^{\ast}_{1}}}\},
\end{eqnarray*}
where $1/\rho'=1/\rho+\alpha/\rho^{\ast}_{0}=1/\sigma+1/\rho^{\ast}_{1}$.

It follows that $1/\sigma=(\alpha-1)/\rho^{\ast}_{0}+1/t$, where $t=\frac{N\rho}{N-\rho}$. Then by (\ref{class1}) and Lemma \ref{lem8}, we infer that
\begin{equation}\label{A1}
\|g_{1z}(u_{n})\|_{\dot{H}^{\{s\},\sigma}}\leq
C_{\{s\},\sigma,N}\|u_{n}\|^{\alpha-1}_{L^{\rho^{\ast}_{0}}}\|u_{n}\|_{H^{\{s\},t}}.
\end{equation}

Thus we deduce from $B^{s}_{\rho,2}\hookrightarrow L^{\rho^{\ast}_{0}}$, $B^{s}_{\rho,2}\hookrightarrow H^{s,\rho}$, $B^{s}_{\rho,2}\hookrightarrow H^{1,\rho^{\ast}_{1}}$ and $B^{s}_{\rho,2}\hookrightarrow H^{\{s\},t}$ that
\begin{equation}\label{A2}
A_1\leq C\|u_{n}\|^{\alpha}_{B^{s}_{\rho,2}}\|u_{n}-u\|_{B^{s}_{\rho,2}}.
\end{equation}

From Proposition \ref{prop2} and Proposition \ref{prop1}, we have
\begin{equation}\label{B1}
\begin{array}{ll}
 A_2=\||\nabla|^{\{s\}}[(g_{1z}(u_{n})-g_{1z}(u))\nabla u]\|_{L^{\rho'}}&\\
\quad\leq
C\||\nabla|^{\{s\}}(g_{1z}(u_{n})-g_{1z}(u))\|_{L^{\sigma}}\|\nabla
u\|_{L^{\rho^{\ast}_{1}}}\\
\qquad+C\|g_{1z}(u_{n})-g_{1z}(u)\|_{L^{\rho^{\ast}_{0}/\alpha}}\|u\|_{\dot{H}^{s,\rho}},&\\
\end{array}
\end{equation}
where $1/\rho'=1/\rho+\alpha/\rho^{\ast}_{0}=1/\sigma+1/\rho^{\ast}_{1}$.

It follows easily from (\ref{class1}) and $B^{s}_{\rho,2}\hookrightarrow L^{\rho^{\ast}_{0}}$ that
\[\|g_{1z}(u_{n})-g_{1z}(u)\|_{L^{\rho^{\ast}_{0}/\alpha}}\leq C(\|u_{n}\|^{\alpha-1}_{B^{s}_{\rho,2}}
+\|u\|^{\alpha-1}_{B^{s}_{\rho,2}})\|u_{n}-u\|_{B^{s}_{\rho,2}}.\]

By Hardy-Littlewood-Sobolev inequality(which will be called HLS inequality simply hereafter) and Proposition \ref{prop2}, we have
\begin{equation*}
    \||\nabla|^{\{s\}}(g_{1z}(u_{n})-g_{1z}(u))\|_{L^{\sigma}}\leq C\|\nabla(g_{1z}(u_{n})-g_{1z}(u))\|_{L^{\nu}},
\end{equation*}
where $1/\nu=1/\sigma-\frac{s-2}{N}=1/\rho^{\ast}_{1}+(\alpha-1)/\rho^{\ast}_{0}$.

Since $g_{1}$ is of class $\mathcal{C}(\alpha,s)$, by (\ref{class1}), we have pointwise estimate:
\begin{equation*}
|\nabla(g_{1z}(u_{n})-g_{1z}(u))|\leq |D^{2}(g_{1}(u_{n})-g_{1}(u))|\cdot|\nabla u|+C|\nabla(u_{n}-u)|\cdot|u_{n}|^{\alpha-1}.
 \end{equation*}
So by (\ref{class2}), $B^{s}_{\rho,2}\hookrightarrow L^{\rho^{\ast}_{0}}$, $B^{s}_{\rho,2}\hookrightarrow H^{1,\rho^{\ast}_{1}}$ and H\"{o}lder's inequality, we get: \\
(i) if $g_{1}(u)$ is a polynomial in $u$ and $\bar{u}$, or if not, we assume further that $\alpha\geq [s]+1=2$, then
\begin{equation}\label{B-4}
\begin{array}{ll}
&\|\nabla(g_{1z}(u_{n})-g_{1z}(u))\|_{L^{\nu}}\\
\leq& C\{(\|u_{n}\|^{\alpha-2}_{L^{\rho^{\ast}_{0}}}+
\|u\|^{\alpha-2}_{L^{\rho^{\ast}_{0}}})\|u_{n}-u\|_{L^{\rho^{\ast}_{0}}}\|\nabla
u\|_{L^{\rho^{\ast}_{1}}}+\|\nabla(u_{n}-u)\|_{L^{\rho^{\ast}_{1}}}
\|u_{n}\|^{\alpha-1}_{L^{\rho^{\ast}_{0}}}\}\\
\leq & C(\|u_{n}\|^{\alpha-1}_{B^{s}_{\rho,2}}
+\|u\|^{\alpha-1}_{B^{s}_{\rho,2}})\|u_{n}-u\|_{B^{s}_{\rho,2}};
\end{array}
\end{equation}
(ii) if $g_{1}$ is not a polynomial and $1=[s]<\alpha<2$, then
\begin{equation}\label{B5}
\begin{array}{ll}
&\|\nabla(g_{1z}(u_{n})-g_{1z}(u))\|_{L^{\nu}} \\
\leq
&C\{\|D^{2}(g_{1}(u_{n})-g_{1}(u))\|_{L^{\frac{\rho^{\ast}_{0}}{\alpha-1}}}\|u\|_{B^{s}_{\rho,2}}
+\|u_{n}-u\|_{B^{s}_{\rho,2}}\|u_{n}\|^{\alpha-1}_{B^{s}_{\rho,2}}\}.
\end{array}
\end{equation}

By \eqref{B1}, \eqref{B-4} and \eqref{B5} we have the following estimates for $A_2$: \\
(i) if $g_{1}(u)$ is a polynomial in $u$ and $\bar{u}$, or if not, we assume further that $\alpha\geq [s]+1=2$, then
\begin{equation}\label{B6}
A_2\leq
C(\|u_{n}\|^{\alpha}_{B^{s}_{\rho,2}}+\|u\|^{\alpha}_{B^{s}_{\rho,2}})\|u_{n}-u\|_{B^{s}_{\rho,2}};
\end{equation}
(ii) otherwise,
\begin{equation}\label{B7}
A_2\leq C\{\|D^{2}(g_{1}(u_{n})-g_{1}(u))\|_{L^{\frac{\rho^{\ast}_{0}}{\alpha-1}}}\|u\|^{2}_{B^{s}_{\rho,2}}+
(\|u_{n}\|^{\alpha}_{B^{s}_{\rho,2}}+\|u\|^{\alpha}_{B^{s}_{\rho,2}})\|u_{n}-u\|_{B^{s}_{\rho,2}}\}.
\end{equation}

The estimate of $A_3$ is similar to $A_1$. From (\ref{class1}), Proposition \ref{prop2} and Proposition \ref{prop1}, we obtain
\begin{equation}\label{C1}
\begin{array}{ll}
A_3=\||\nabla|^{\{s\}}[g_{1\bar{z}}(u_{n})\nabla(\bar{u}_{n}-\bar{u})]\|_{L^{\rho'}}&\\
\qquad\leq C
\{\||u_{n}|^{\alpha}\|_{L^{\rho^{\ast}_{0}/\alpha}}\|u_{n}-u\|_{\dot{H}^{s,\rho}}
+\|g_{1\bar{z}}(u_{n})\|_{\dot{H}^{\{s\},\sigma}}\|\nabla(u_{n}-u)\|_{L^{\rho^{\ast}_{1}}}\},&
\end{array}
\end{equation}
where $1/\rho'=1/\rho+\alpha/\rho^{\ast}_{0}=1/\sigma+1/\rho^{\ast}_{1}$.

Then completely similar to the estimate of $A_1$, we deduce from (\ref{class1}), Lemma \ref{lem8} and the embedding relationships that
\begin{equation}\label{C3}
A_3\leq C\|u_{n}\|^{\alpha}_{B^{s}_{\rho,2}}\|u_{n}-u\|_{B^{s}_{\rho,2}}.
\end{equation}

Now we turn to $A_4$, from Proposition \ref{prop2} and Proposition \ref{prop1}, we get
\begin{equation}
\begin{array}{ll}
A_4=\||\nabla|^{\{s\}}[(g_{1\bar{z}}(u_{n})-g_{1\bar{z}}(u))\nabla
\bar{u}]\|_{L^{\rho'}}&\\
\qquad\leq
C\||\nabla|^{\{s\}}(g_{1\bar{z}}(u_{n})-g_{1\bar{z}}(u))\|_{L^{\sigma}}\|\nabla
u\|_{L^{\rho^{\ast}_{1}}}&\\
\qquad\qquad+C\|g_{1\bar{z}}(u_{n})-g_{1\bar{z}}(u)\|_{L^{\rho^{\ast}_{0}/\alpha}}\|u\|_{\dot{H}^{s,\rho}},&\\
\end{array}
\end{equation}
where $1/\rho'=1/\rho+\alpha/\rho^{\ast}_{0}=1/\sigma+1/\rho^{\ast}_{1}$.

Then completely similar to the estimate of $A_2$, we deduce from (\ref{class1}), (\ref{class2}), Proposition \ref{prop2}, H\"{o}lder's and HLS inequalities that \\
(i) if $g_{1}(u)$ is a polynomial in $u$ and $\bar{u}$, or if not, we assume further that $\alpha\geq [s]+1=2$, then
\begin{equation}\label{B8}
A_4\leq
C(\|u_{n}\|^{\alpha}_{B^{s}_{\rho,2}}+\|u\|^{\alpha}_{B^{s}_{\rho,2}})\|u_{n}-u\|_{B^{s}_{\rho,2}};
\end{equation}
(ii) otherwise,
\begin{equation}\label{B9}
A_4\leq C\{\|D^{2}(g_{1}(u_{n})-g_{1}(u))\|_{L^{\frac{\rho^{\ast}_{0}}{\alpha-1}}}\|u\|^{2}_{B^{s}_{\rho,2}}+
(\|u_{n}\|^{\alpha}_{B^{s}_{\rho,2}}+\|u\|^{\alpha}_{B^{s}_{\rho,2}})\|u_{n}-u\|_{B^{s}_{\rho,2}}\}.
\end{equation}

Finally by the estimates of $A_1$,$A_2$,$A_3$ and $A_4$, applying H\"{o}lder's inequality on time, we get estimate of $E_2=\|g_{1}(u_{n})-g_{1}(u)\|_{L^{\gamma'}(I,\dot{B}^{s}_{\rho',2})}$ as following:\\
(i)\, if $g_{1}(u)$ is a polynomial in $u$ and $\bar{u}$, or if not, we assume further that $\alpha\geq [s]+1=2$, then
\[E_2\leq CT^{\sigma}(\|u_{n}\|^{\alpha}_{L^{\gamma}(I,{B^{s}_{\rho,2}})}
+\|u\|^{\alpha}_{L^{\gamma}(I,{B^{s}_{\rho,2}})})\|u_{n}-u\|_{L^{\gamma}(I,{B^{s}_{\rho,2}})};\]
(ii)\, if $g_{1}$ is not a polynomial and $1=[s]<\alpha<2$, then
\[E_2\leq C\{T^{\sigma}(\|u_{n}\|^{\alpha}_{L^{\gamma}(I,{B^{s}_{\rho,2}})}+\|u\|^{\alpha}_{L^{\gamma}(I,{B^{s}_{\rho,2}})})\|u_{n}-u\|_{L^{\gamma}(I,{B^{s}_{\rho,2}})}
+\|K(u_{n},u)\|_{L^{\gamma'}(I)}\};\]
where $\sigma=\frac{4-\alpha(N-2s)}{4}$ and the Kato's remainder term $K(u_{n},u)$ is defined by
\begin{equation}\label{Kato}
    K(u_{n},u)=\|u\|^{2}_{B^{s}_{\rho,2}}\|D^{2}g_{1}(u_{n})-D^{2}g_{1}(u)\|_{L^{\frac{\rho^{\ast}_{0}}{\alpha-1}}}.
\end{equation}

As $\|u_{n}\|_{L^{\gamma}(I,{B^{s}_{\rho,2}})}$ is bounded, there exists
\[M=\|u\|_{L^{\gamma}(I,B^{s}_{\rho,2})}+\sup_{n\geq 1}\|u_{n}\|_{L^{\gamma}(I,B^{s}_{\rho,2})}< \infty\]
such that $\|u\|_{L^{\gamma}(I,{B^{s}_{\rho,2}})}\leq M$ and $\|u_{n}\|_{L^{\gamma}(I,{B^{s}_{\rho,2}})}\leq M$.
Inserting the estimates of $E_1$ and $E_2$ into the original Strichartz estimate (\ref{stri}), we get the following two cases:\\
Case (i) \,if $g_{1}(u)$ is a polynomial in $u$ and $\bar{u}$, or if not, we assume further that $\alpha\geq [s]+1=2$, then
\begin{equation}\label{X1}
    X_{n}\leq C\|\phi_{n}-\phi\|_{H^{s}}+CTX_{n}+CT^{\sigma}M^{\alpha}X_{n};
\end{equation}
Case (ii) \,if $g_{1}$ is not a polynomial and $1=[s]<\alpha<2$, then
\begin{equation}\label{X2}
    X_{n}\leq C\|\phi_{n}-\phi\|_{H^{s}}+C\{TX_{n}+T^{\sigma}M^{\alpha}X_{n}+\|K(u_{n},u)\|_{L^{\gamma'}((-T,T))}\};
\end{equation}
where $\sigma=\frac{4-\alpha(N-2s)}{4}$. Here we need assumption $\alpha<\frac{4}{N-2s}$ such that $\sigma>0$.

For case $(i)$, we can choose $T$ sufficiently small so that $C(T+T^{\sigma}M^{\alpha})\leq\frac{1}{2}$, and deduce from \eqref{X1} that as $n\rightarrow \infty$,
\[X_{n}=\|u_{n}(t)-u(t)\|_{L^{\infty}(I,H^{s})}+\|u_{n}(t)-u(t)\|_{L^{\gamma}(I,B^{s}_{\rho,2})}\leq C\|\phi_{n}-\phi\|_{H^{s}}\rightarrow 0,\]
so the solution flow is locally Lipschitz.

For case $(ii)$, we can use the Kato's method, but we will apply a direct way of proving based on Lemma \ref{lem6} first. By (\ref{Kato}), (\ref{class2}) and applying H\"{o}lder's estimates, we obtain
\begin{equation}\label{Kato1}
    \|K(u_{n},u)\|_{L^{\gamma'}(I)}\leq CT^{\sigma}M^{2}\|u_{n}-u\|^{\alpha-1}_{L^{\gamma}(I,L^{\rho^{\ast}_{0}})},
\end{equation}
where $\sigma=\frac{4-\alpha(N-2s)}{4}$. Using \eqref{Kato1} combined with \eqref{X2}, we get
\begin{equation}\label{Xn2}
X_{n}\leq
C\{\|\phi_{n}-\phi\|_{H^{s}}+\|u_{n}-u\|^{\alpha-1}_{L^{\gamma}((-T,T),L^{\rho^{\ast}_{0}})}\},
\end{equation}
for $T$ sufficiently small. As a consequence of Lemma \ref{lem6}, we know if $\alpha> \frac{4\{s\}}{N-2s}$, then
\[\|u_{n}-u\|_{L^{\gamma}((-T,T),L^{\rho^{\ast}_{0}})}\rightarrow 0, \,\,\, as \,\,\, n\rightarrow\infty,\]
which yields the desired convergence: $X_{n}\rightarrow 0$ as $n\rightarrow \infty$, by \eqref{Xn2}.

To deal with the cases that $1=[s]<\alpha\leq\frac{4\{s\}}{N-2s}$, we resort to Kato's method(see \cite{CFZ}). It follows from (\ref{class1}), \eqref{Kato} and H\"{o}lder's estimates that
\begin{equation}\label{control kato}
    K(u_{n},u)\leq C\|u\|^{2}_{B^{s}_{\rho,2}}(\|u_{n}\|^{\alpha-1}_{L^{\rho^{\ast}_{0}}}+\|u\|^{\alpha-1}_{L^{\rho^{\ast}_{0}}}).
\end{equation}
We can also deduce from \eqref{X2} that
\[X_{n}\leq C\{\|\phi_{n}-\phi\|_{H^{s}}+\|K(u_{n},u)\|_{L^{\gamma'}(-T,T)}\}\]
for $T$ sufficiently small. Therefore, to prove $X_n\to 0$ it suffices to show that
\begin{equation}\label{Kato3}
    \|K(u_{n},u)\|_{L^{\gamma'}(-T,T)}\rightarrow 0, \quad as \quad n\rightarrow
    \infty.
\end{equation}
To show (\ref{Kato3}) we argue by contradiction. Suppose that there
exist $\varepsilon>0$, and a subsequence, which is still  denoted by
$\{u_{n}\}_{n\geq 1}$, such that
\begin{equation}\label{Kato4}
    \|K(u_{n},u)\|_{L^{\gamma'}(-T,T)}\geq \varepsilon.
\end{equation}
Since $\frac{(\alpha-1)\gamma}{\gamma-3}<\gamma$,  using
(\ref{equa100})(taking $j=0$ there) and by possibly extracting a subsequence, we can
assume that $u_{n}\rightarrow u$ a.e. on $(-T,T)\times
\mathbb{R}^{N}$ and that
\[\|u_{n+1}-u_{n}\|_{L^{\frac{(\alpha-1)\gamma}{\gamma-3}}(I,L^{\rho^{\ast}_{0}}(\mathbb{R}^{N}))}<2^{-n}.\]
So if we let $\omega=|u_{1}|+\sum_{n=1}^{\infty}|u_{n+1}-u_{n}|$, then $\omega\in
L^{\frac{(\alpha-1)\gamma}{\gamma-3}}(I,L^{\rho^{\ast}_{0}}(\mathbb{R}^{N}))$
and $|u_{n}|\leq \omega$ a.e. on $(-T,T)\times
\mathbb{R}^{N}$. Consequently, by \eqref{control kato} and Young's inequality, there is a constant $C>0$ such that
\[K(u_{n},u)^{\gamma'}\leq C(\|\omega\|^{\frac{(\alpha-1)\gamma}{\gamma-3}}_{L^{\rho^{\ast}_{0}}}+\|u\|^{\frac{(\alpha-1)\gamma}{\gamma-3}}_{L^{\rho^{\ast}_{0}}}
+\|u\|^{\gamma}_{B^{s}_{\rho,2}})\in L^{1}((-T,T)).\]

On the other hand, note that by (\ref{class1}), we have
\[|D^{2}g_{1}(u_{n})-D^{2}g_{1}(u))|^{\frac{\rho^{\ast}_{0}}{\alpha-1}}\leq C(|u_{n}|^{\rho^{\ast}_{0}}+|u|^{\rho^{\ast}_{0}});\]
note also that since $g_{1}\in C^{2}(\mathbb{C},\mathbb{C})$, $D^{2}g_{1}$ is continuous, hence we can deduce from $u_{n}\rightarrow u$ that $D^{2}g_{1}(u_{n})\rightarrow D^{2}g_{1}(u)$. Therefore, by dominated convergence theorem and a standard contradiction argument as above, after possibly extracting a subsequence again, we can obtain from $u_{n}(t)\rightarrow u(t)$ in $L^{\rho^{\ast}_{0}}(\mathbb{R}^{N})$ for a.a. $t\in(-T,T)$ that $\|D^{2}g_{1}(u_{n})-D^{2}g_{1}(u)\|_{L^{\frac{\rho^{\ast}_{0}}{\alpha-1}}}\rightarrow0$ for a.a. $t\in(-T,T)$, which implies that $K(u_{n},u)\rightarrow 0$ as $n\rightarrow \infty$ for a.a. $t\in(-T,T)$ by (\ref{Kato}).

Now we can use dominated convergence theorem to infer that
\[\|K(u_{n},u)\|_{L^{\gamma'}(-T,T)}\rightarrow 0,\]
which contradicts (\ref{Kato4}). Hence (\ref{Kato3}) holds.

Thus we have proved $X_{n}\rightarrow 0$ as $n\rightarrow \infty$ for $T$ sufficiently small in both case (i) and (ii). The convergence for arbitrary admissible pair $(q,r)$ follows from Strichartz's estimates. The conclusions $(i)$ and $(ii)$ of Theorem \ref{th1} follow by iterating this property to cover any compact subset of $(-T_{min},T_{max})$. Moreover, if $g_{1}(u)$ is a polynomial in $u$ and $\bar{u}$, or if not, we assume further that $\alpha\geq [s]+1=2$, then the continuous dependence is locally Lipschitz. \\

Now we turn to subcase 2: \,$2<s<\frac{N}{2}$.\\

Applying Strichartz's estimate to the difference equation (\ref{eq8}), we find that the main difficulty is still how to estimate $\|g_{1}(u_{n})-g_{1}(u)\|_{L^{\gamma'}(I,\dot{B}^{s}_{\rho',2})}$, or more precisely, how to estimate $\||\nabla|^{s}(g_{1}(u_{n})-g_{1}(u))\|_{L^{\rho'}}$. By the $L^{p}$ boundedness of Riesz transforms, we have the following formula
\begin{equation*}
\begin{array}{ll}
\||\nabla|^{s}(g_{1}(u_{n})-g_{1}(u))\|_{L^{\rho'}}
=\||\nabla|^{s-[s]} [|\nabla|^{[s]}(g_{1}(u_{n})-g_{1}(u))]\|_{L^{\rho'}} &\\
\leq \sum_{k=1}^{[s]}\sum_{|\beta_1|+\cdots+|\beta_k|=[s],|\beta_j|\geq 1}\||\nabla|^{\{s\}}[ (g_{1,z\backslash\bar{z}}^{(k)}(u_n)-g_{1,z\backslash\bar{z}}^{(k)}(u))\prod_{j=1}^k\partial^{\beta_j}(u\backslash\bar{u})\\
\qquad+g_{1,z\backslash \bar{z}}^{(k)}(u_{n})\sum_{(w_{1},\cdots,w_{k})}\prod_{j=1}^k\partial^{\beta_j}w_j]\|_{L^{\rho'}},&\\
\end{array}
\end{equation*}
where $k\in \{1,\cdots,[s]\}$ and the $\beta_{j}$'s are multi-indices such that $[s]=|\beta_{1}|+\cdots+|\beta_{k}|$ and $|\beta_{j}|\geq 1$ for $j=1,\cdots,k$, all the $w_j$'s are equal to $u_{n},\bar{u}_{n}$ or $u,\bar{u}$, except one which is equal to $u_{n}-u$ or its conjugate. \\
We deduce from the above formula and Proposition \ref{prop1} that
\begin{eqnarray*}
&&\||\nabla|^{s}(g_{1}(u_{n})-g_{1}(u))\|_{L^{\rho'}}\\
&\leq&C\sum_{k=1}^{[s]}\sum_{\sum_{j=1}^{k}|\beta_j|=[s],|\beta_j|\geq1}
\{\||\nabla|^{\{s\}}(g_{1,z\backslash\bar{z}}^{(k)}(u_n)-g_{1,z\backslash\bar{z}}^{(k)}(u))\|_{L^{\sigma_{1}}}
\|\prod_{j=1}^k\partial^{\beta_j}u\|_{L^{t_{1}}}\\
&&+\|g_{1,z\backslash\bar{z}}^{(k)}(u_n)-g_{1,z\backslash\bar{z}}^{(k)}(u)\|_{L^{\sigma_{2}}}
\||\nabla|^{\{s\}}(\prod_{j=1}^k\partial^{\beta_j}u)\|_{L^{t_{2}}}\\
&&+\||\nabla|^{\{s\}}(g_{1,z\backslash\bar{z}}^{(k)}(u_{n}))\|_{L^{\sigma_{3}}}
\sum_{(w_{1},\cdots,w_{k})}\|\prod_{j=1}^k\partial^{\beta_j}w_j\|_{L^{t_{3}}}\\
&&+\|g_{1,z\backslash\bar{z}}^{(k)}(u_{n})\|_{L^{\sigma_{4}}}
\sum_{(w_{1},\cdots,w_{k})}\||\nabla|^{\{s\}}(\prod_{j=1}^k\partial^{\beta_j}w_j)\|_{L^{t_{4}}}\}\\
&=:&B_1+B_2+B_3+B_4,
\end{eqnarray*}
where the exponents $\sigma_{i}$, $t_{i}$($i=1,\cdots,4$) depend on $k$ and satisfy $1/\rho'=1/\sigma_{1}+1/t_{1}=1/\sigma_{2}+1/t_{2}=1/\sigma_{3}+1/t_{3}=1/\sigma_{4}+1/t_{4}$ with $1/t_{1}=1/t_{3}=\sum_{j=1}^{k}1/\rho^{\ast}_{|\beta_{j}|}$, $\frac{1}{\sigma_{2}}=\frac{1}{\sigma_{4}}=\frac{\alpha+1-k}{\rho^{\ast}_{0}}$. \\

First let us estimate $B_1$. Note that $1/t_{1}=\sum_{j=1}^{k}1/\rho^{\ast}_{|\beta_{j}|}$, by using H\"{o}lder's inequality and Proposition \ref{prop2}, we get for any $k\in\{1,\cdots,[s]\}$,
\[\|\prod_{j=1}^k\partial^{\beta_j}u\|_{L^{t_{1}}}\leq C\prod_{j=1}^k \|u\|_{H^{|\beta_{j}|,\rho^{\ast}_{|\beta_{j}|}}}\leq C \|u\|^{k}_{B^{s}_{\rho,2}}.\]

By HLS inequality(see Lemma \ref{HLS}) and Proposition \ref{prop2}, we have
\begin{equation}\label{equa109}
    \||\nabla|^{\{s\}}(g_{1,z\backslash \bar{z}}^{(k)}(u_n)-g_{1,z\backslash \bar{z}}^{(k)}(u))\|_{L^{\sigma_{1}}}
\leq C\|\nabla(g_{1,z\backslash \bar{z}}^{(k)}(u_n)-g_{1,z\backslash \bar{z}}^{(k)}(u))\|_{L^{\nu_{1}}},
\end{equation}
where $1/\nu_{1}=1/\sigma_{1}-\frac{s-[s]-1}{N}=1/\rho'-\sum_{j=1}^{k}1/\rho^{\ast}_{|\beta_{j}|}-\frac{s-[s]-1}{N}
=\frac{1}{\rho^{\ast}_{1}}+\frac{\alpha-k}{\rho^{\ast}_{0}}$.\\
Since $g_{1}$ is of class $\mathcal{C}(\alpha,s)$, by (\ref{class1}), we have pointwise estimate:
\[|\nabla(g_{1,z\backslash \bar{z}}^{(k)}(u_n)-g_{1,z\backslash \bar{z}}^{(k)}(u))|\leq C|u_{n}|^{\alpha-k}|\nabla(u_{n}-u)|+|D^{k+1}(g_{1}(u_{n})-g_{1}(u))|\cdot|\nabla u|,\]
for any $k\in\{1,\cdots,[s]\}$.

Noting that $\frac{1}{\nu_{1}}=\frac{1}{\rho^{\ast}_{1}}+\frac{\alpha-k}{\rho^{\ast}_{0}}$, by (\ref{equa109}), (\ref{class1}), (\ref{class2}), $B^{s}_{\rho,2}\hookrightarrow L^{\rho^{\ast}_{0}}$, $B^{s}_{\rho,2}\hookrightarrow H^{1,\rho^{\ast}_{1}}$ and H\"{o}lder's inequality, we get two cases:\\
(i) if $g_{1}(u)$ is a polynomial in $u$ and $\bar{u}$, or if not, we assume further that $\alpha\geq [s]+1$, then for any $k\in\{1,\cdots,[s]\}$,
\[\||\nabla|^{\{s\}}(g_{1,z\backslash \bar{z}}^{(k)}(u_n)-g_{1,z\backslash \bar{z}}^{(k)}(u))\|_{L^{\sigma_{1}}}\leq C(\|u_{n}\|^{\alpha-k}_{B^{s}_{\rho,2}}+\|u\|^{\alpha-k}_{B^{s}_{\rho,2}})\|u_{n}-u\|_{B^{s}_{\rho,2}},\]
thus
\[B_1\leq C(\|u_{n}\|^{\alpha}_{B^{s}_{\rho,2}}+\|u\|^{\alpha}_{B^{s}_{\rho,2}})\|u_{n}-u\|_{B^{s}_{\rho,2}};\]
(ii) if $g_{1}$ is not a polynomial and $[s]<\alpha<[s]+1$, then for any $k\in\{1,\cdots,[s]-1\}$, we have
\[\||\nabla|^{\{s\}}(g_{1,z\backslash \bar{z}}^{(k)}(u_n)-g_{1,z\backslash \bar{z}}^{(k)}(u))\|_{L^{\sigma_{1}}}\leq C(\|u_{n}\|^{\alpha-k}_{B^{s}_{\rho,2}}+\|u\|^{\alpha-k}_{B^{s}_{\rho,2}})\|u_{n}-u\|_{B^{s}_{\rho,2}},\]
except when $k=[s]$, we have
\begin{eqnarray*}
\begin{array}{ll}
\||\nabla|^{\{s\}}(g_{1,z\backslash \bar{z}}^{(k)}(u_n)-g_{1,z\backslash \bar{z}}^{(k)}(u))\|_{L^{\sigma_{1}}}&\\
\qquad\leq C\|\nabla(g_{1,z\backslash \bar{z}}^{(k)}(u_n)-g_{1,z\backslash \bar{z}}^{(k)}(u))\|_{L^{\nu_{1}}}&\\
\qquad\leq
C\{(\|u_{n}\|^{\alpha-[s]}_{B^{s}_{\rho,2}}+\|u\|^{\alpha-[s]}_{B^{s}_{\rho,2}})\|u_{n}-u\|_{B^{s}_{\rho,2}}&\\
\qquad\qquad+\|D^{[s]+1}(g_{1}(u_{n})-g_{1}(u))\|_{L^{\frac{\rho^{\ast}_{0}}{\alpha-[s]}}}\|u\|_{B^{s}_{\rho,2}}\},&\\
\end{array}
\end{eqnarray*}
thus
\[
\begin{array}{ll}
B_1\leq C\{(\|u_{n}\|^{\alpha}_{B^{s}_{\rho,2}}
+\|u\|^{\alpha}_{B^{s}_{\rho,2}})\|u_{n}-u\|_{B^{s}_{\rho,2}}
&\\
\qquad\qquad+\|D^{[s]+1}(g_{1}(u_{n})-g_{1}(u))\|_{L^{\frac{\rho^{\ast}_{0}}{\alpha-[s]}}}\|u\|^{[s]+1}_{B^{s}_{\rho,2}}\}.
\end{array}
\]

Let us now estimate $B_2$. Since $g_{1}$ is of class $\mathcal{C}(\alpha,s)$, by (\ref{class1}) and the mean value theorem, we get for any $k\in\{1,\cdots,[s]\}$,
\[|g_{1,z\backslash \bar{z}}^{(k)}(u_n)-g_{1,z\backslash \bar{z}}^{(k)}(u)|\leq C(|u_{n}|^{\alpha-k}+|u|^{\alpha-k})|u_{n}-u|.\]
Noting that $\frac{1}{\sigma_{2}}=\frac{\alpha+1-k}{\rho^{\ast}_{0}}$, by H\"{o}lder's inequality and $B^{s}_{\rho,2}\hookrightarrow L^{\rho^{\ast}_{0}}$, we infer that for any $k\in\{1,\cdots,[s]\}$,
\begin{equation}\label{eq15}
\|g_{1,z\backslash \bar{z}}^{(k)}(u_n)-g_{1,z\backslash\bar{z}}^{(k)}(u)\|_{L^{\sigma_{2}}}\leq C(\|u_{n}\|^{\alpha-k}_{B^{s}_{\rho,2}}+\|u\|^{\alpha-k}_{B^{s}_{\rho,2}})\|u_{n} -u\|_{B^{s}_{\rho,2}}.
\end{equation}

When $k=1$, we deduce from Proposition \ref{prop2} that $t_{2}=\rho$, thus we have
\begin{equation}\label{eq k1}
\||\nabla|^{\{s\}}(\prod_{j=1}^k\partial^{\beta_j}u)\|_{L^{t_{2}}}\leq C\|u\|_{B^{s}_{\rho,2}}.
\end{equation}

When $k\in\{2,\cdots,[s]\}$, by HLS inequality and Proposition \ref{prop2}, we have
\[\||\nabla|^{\{s\}}(\prod_{j=1}^k\partial^{\beta_j}u)\|_{L^{t_{2}}}\leq C\|\nabla(\prod_{j=1}^k\partial^{\beta_j}u)\|_{L^{\nu_{2}}},\]
where $1/\nu_{2}=1/t_{2}-\frac{s-[s]-1}{N}=1/\rho'-\frac{\alpha+1-k}{\rho^{\ast}_{0}}-\frac{s-[s]-1}{N}=\sum_{j=1}^{k}1/\rho^{\ast}_{|\beta_{j}|}+1/N$.\\
For fixed $i=1,\cdots,k$, define $\hat{\beta}_{ji}=\beta_{j}$ for
$j\neq i$,  and $|\hat{\beta}_{ji}|=|\beta_{j}|+1$ for $j=i$. For a
fixed $i=1,\cdots,k$,
$1/\nu_{2}=\sum_{j=1}^{k}1/\rho^{\ast}_{|\hat{\beta}_{ji}|}$, by
applying H\"{o}lder's inequality, we deduce that for any $k\in\{2,\cdots,[s]\}$,
\begin{equation}\label{eq16}
\begin{array}{ll}\||\nabla|^{\{s\}}(\prod_{j=1}^k\partial^{\beta_j}u)\|_{L^{t_{2}}}
\leq C\|\nabla(\prod_{j=1}^k\partial^{\beta_j}u)\|_{L^{\nu_{2}}}\leq C\sum_{i=1}^{k}\|\prod_{j=1}^k\partial^{\hat{\beta}_{ji}}u\|_{L^{\nu_{2}}}&\\
\qquad\leq C\sum_{i=1}^{k}\prod_{j=1}^k\|u\|_{H^{\hat{\beta}_{ji},\rho^{\ast}_{|\hat{\beta}_{ji}|}}}\leq C\|u\|^{k}_{B^{s}_{\rho,2}}.&
\end{array}
\end{equation}
Here we have used the fact that $B^{s}_{\rho,2}\hookrightarrow H^{\hat{\beta}_{ji},\rho^{\ast}_{|\hat{\beta}_{ji}|}}$, which can been deduced from Proposition \ref{prop2}. From (\ref{eq15}), (\ref{eq k1}) and (\ref{eq16}), we get
\[B_2\leq C(\|u_{n}\|^{\alpha}_{B^{s}_{\rho,2}}+\|u\|^{\alpha}_{B^{s}_{\rho,2}})\|u_{n}-u\|_{B^{s}_{\rho,2}}.\]

Next we estimate $B_3$. First note that $1/t_{3}=\sum_{j=1}^{k}1/\rho^{\ast}_{|\beta_{j}|}$, by using H\"{o}lder's inequality and Proposition \ref{prop2}, we get
\[\|\prod_{j=1}^k\partial^{\beta_j}w_j\|_{L^{t_{3}}}\leq C\prod_{j=1}^k \|w_j\|_{H^{|\beta_{j}|,\rho^{\ast}_{|\beta_{j}|}}}\leq C\prod_{j=1}^k\|w_j\|_{B^{s}_{\rho,2}}.\]
Noting that the $w_j$'s are equal to $u_{n},\bar{u}_{n}$ or
$u,\bar{u}$, except one which is equal to $u_{n}-u$ or its
conjugate, we have for any $k\in\{1,\cdots,[s]\}$,
\begin{equation}\label{eq17}
  \|\prod_{j=1}^k\partial^{\beta_j}w_j\|_{L^{t_{3}}}\leq C(\|u_{n}\|^{k-1}_{B^{s}_{\rho,2}}
+\|u\|^{k-1}_{B^{s}_{\rho,2}})\|u_{n}-u\|_{B^{s}_{\rho,2}}.
\end{equation}

Since $1/\sigma_{3}=1/\rho'-\sum_{j=1}^{k}1/\rho^{\ast}_{|\beta_{j}|}=\frac{\alpha-k}{\rho^{\ast}_{0}}+\frac{1}{\rho}-\frac{[s]}{N}$,
we deduce from (\ref{class1}) and Lemma \ref{lem8} that for any $k\in\{1,\cdots,[s]\}$,
\[\||\nabla|^{\{s\}}(g_{1,z\backslash \bar{z}}^{(k)}(u_{n}))\|_{L^{\sigma_{3}}}\leq C\|u_{n}\|^{\alpha-k}_{L^{\rho^{\ast}_{0}}}\|u_{n}\|_{\dot{H}^{\{s\},t}},\]
where $t=\frac{\rho N}{N-[s]\rho}$. It's easy to see that $B^{s}_{\rho,2}\hookrightarrow \dot{H}^{\{s\},t}$, thus
\begin{equation}\label{eq18}
 \||\nabla|^{\{s\}}(g_{1,z\backslash \bar{z}}^{(k)}(u_{n}))\|_{L^{\sigma_{3}}}\leq C\|u_{n}\|^{\alpha+1-k}_{B^{s}_{\rho,2}}.
\end{equation}
From (\ref{eq17}) and (\ref{eq18}), we get
\[B_3\leq C(\|u_{n}\|^{\alpha}_{B^{s}_{\rho,2}}
+\|u\|^{\alpha}_{B^{s}_{\rho,2}})\|u_{n}-u\|_{B^{s}_{\rho,2}}.\]

Finally we estimate $B_4$. Note that $\frac{1}{\sigma_{4}}=\frac{\alpha+1-k}{\rho^{\ast}_{0}}$, we deduce from $(\ref{class1})$ and H\"{o}lder's inequality that for any $k\in\{1,\cdots,[s]\}$,
\begin{equation}\label{eq19}
\|g_{1,z\setminus \bar{z}}^{(k)}(u_{n})\|_{L^{\sigma_{4}}}\leq C\|u_{n}\|^{\alpha+1-k}_{L^{\rho^{\ast}_{0}}}\leq C\|u_{n}\|^{\alpha+1-k}_{B^{s}_{\rho,2}}.
\end{equation}

When $k=1$, we deduce from Proposition \ref{prop2} that $t_{4}=\rho$, note also that $|\beta_{1}|=[s]$ and $w_{1}$ is equal to $u_{n}-u$ or its conjugate, so we have
\begin{equation}\label{equa k1}
\||\nabla|^{\{s\}}(\prod_{j=1}^k\partial^{\beta_j}w_j)\|_{L^{t_{4}}}\leq C\|u_{n}-u\|_{B^{s}_{\rho,2}}.
\end{equation}

When $k\in\{2,\cdots,[s]\}$, by HLS inequality and Proposition \ref{prop2}, we have
\[\||\nabla|^{\{s\}}(\prod_{j=1}^k\partial^{\beta_j}w_j)\|_{L^{t_{4}}}\leq C\|\nabla(\prod_{j=1}^k\partial^{\beta_j}w_j)\|_{L^{\nu_{4}}},\]
where $1/\nu_{4}=1/t_{4}-\frac{s-[s]-1}{N}=1/\rho'-\frac{\alpha+1-k}{\rho^{\ast}_{0}}-\frac{s-[s]-1}{N}=\sum_{j=1}^{k}1/\rho^{\ast}_{|\beta_{j}|}+1/N$.\\
For fixed $i=1,\cdots,k$, define $\hat{\beta}_{ji}=\beta_{j}$ for
$j\neq i$,  and $|\hat{\beta}_{ji}|=|\beta_{j}|+1$ for $j=i$. For a
fixed $i=1,\cdots,k$,
$1/\nu_{4}=\sum_{j=1}^{k}1/\rho^{\ast}_{|\hat{\beta}_{ji}|}$, by
applying H\"{o}lder's inequality, we deduce that for any $k\in\{2,\cdots,[s]\}$,
\begin{equation}\label{eq20}
\begin{array}{ll}
\||\nabla|^{\{s\}}(\prod_{j=1}^k\partial^{\beta_j}w_j)\|_{L^{t_{4}}}\leq C\|\nabla(\prod_{j=1}^k\partial^{\beta_j}w_j)\|_{L^{\nu_{4}}}&\\
\qquad\leq C\sum_{i=1}^{k}\|\prod_{j=1}^k\partial^{\hat{\beta}_{ji}}w_j\|_{L^{\nu_{4}}}\leq C\sum_{i=1}^{k}\prod_{j=1}^k\|w_j\|_{H^{\hat{\beta}_{ji},\rho^{\ast}_{|\hat{\beta}_{ji}|}}}&\\
\qquad\qquad\leq C\prod_{j=1}^k\|w_j\|_{B^{s}_{\rho,2}}\leq C(\|u_{n}\|^{k-1}_{B^{s}_{\rho,2}}+\|u\|^{k-1}_{B^{s}_{\rho,2}})\|u_{n}-u\|_{B^{s}_{\rho,2}}.&\\
\end{array}
\end{equation}
In the above estimate we have used the fact $B^{s}_{\rho,2}\hookrightarrow H^{\hat{\beta}_{ji},\rho^{\ast}_{|\hat{\beta}_{ji}|}}$, which is deduced from Proposition \ref{prop2}. From (\ref{eq19}), (\ref{equa k1}) and (\ref{eq20}), we get
\begin{equation}\label{B4}
B_4\leq C(\|u_{n}\|^{\alpha}_{B^{s}_{\rho,2}}+\|u\|^{\alpha}_{B^{s}_{\rho,2}})\|u_{n}-u\|_{B^{s}_{\rho,2}}.
\end{equation}

Combining the estimates for $B_1$, $B_2$, $B_3$ and $B_4$, we have the following two cases for the estimate of $\||\nabla|^{s}(g_{1}(u_{n})-g_{1}(u))\|_{L^{\rho'}}$:

(i) if $g_{1}(u)$ is a polynomial in $u$ and $\bar{u}$, or if not, we assume further that $\alpha\geq [s]+1$, then
\[\||\nabla|^{s}(g_{1}(u_{n})-g_{1}(u))\|_{L^{\rho'}} \leq
C(\|u_{n}\|^{\alpha}_{B^{s}_{\rho,2}}
+\|u\|^{\alpha}_{B^{s}_{\rho,2}})\|u_{n}-u\|_{B^{s}_{\rho,2}}.\]

(ii) if $g_{1}$ is not a polynomial and $[s]<\alpha<[s]+1$, then
\begin{equation}\label{nonlinear}
\begin{array}{ll}
 \||\nabla|^{s}(g_{1}(u_{n})-g_{1}(u))\|_{L^{\rho'}}\leq
C\{(\|u_{n}\|^{\alpha}_{B^{s}_{\rho,2}}
+\|u\|^{\alpha}_{B^{s}_{\rho,2}})\|u_{n}-u\|_{B^{s}_{\rho,2}}&\\
\qquad\qquad\qquad\qquad\qquad+\|D^{[s]+1}(g_{1}(u_{n})-g_{1}(u))\|_{L^{\frac{\rho^{\ast}_{0}}{\alpha-[s]}}}\|u\|^{[s]+1}_{B^{s}_{\rho,2}}\}.&
\end{array}
\end{equation}
We can see that the difference between two nonlinear interactions can be estimated by a Lipschitz term plus a lower order term, the Kato's remainder term $K(u_{n},u)$ can be defined by
\begin{equation}\label{Kato2}
    K(u_{n},u)=\|D^{[s]+1}g_{1}(u_{n})-D^{[s]+1}g_{1}(u)\|_{L^{\frac{\rho^{\ast}_{0}}{\alpha-[s]}}}\|u\|^{[s]+1}_{B^{s}_{\rho,2}}.
\end{equation}

Since $\|u_{n}\|_{L^{\gamma}(I,{B^{s}_{\rho,2}})}$ is bounded, there exists
\[M=\|u\|_{L^{\gamma}(I,B^{s}_{\rho,2})}+\sup_{n\geq 1}\|u_{n}\|_{L^{\gamma}(I,B^{s}_{\rho,2})}< \infty\]
such that $\|u\|_{L^{\gamma}(I,{B^{s}_{\rho,2}})}\leq M$ and $\|u_{n}\|_{L^{\gamma}(I,{B^{s}_{\rho,2}})}\leq M$. Thus, by using H\"{o}lder's inequality on time and the original Strichartz estimate to the difference equation, we
can obtain estimate for $X_n$ by considering the following two cases respectively: \\
Case (i) \,\, if $g_{1}(u)$ is a polynomial in $u$ and $\bar{u}$, or if not, we assume further that $\alpha\geq [s]+1$, then
\begin{equation}\label{X3}
    X_n\leq C\|\phi_{n}-\phi\|_{H^{s}}+CTX_n+CT^{\sigma}M^{\alpha}X_n;
\end{equation}
Case (ii) \, if $g_{1}$ is not a polynomial and $[s]<\alpha<[s]+1$, then
\begin{equation}\label{X4}
 X_n\leq C\|\phi_{n}-\phi\|_{H^{s}}+C\{TX_n+T^{\sigma}M^{\alpha}X_n+\|K(u_{n},u)\|_{L^{\gamma'}(-T,T)}\};
\end{equation}
where $\sigma=\frac{4-\alpha(N-2s)}{4}$. Here we need assumption $\alpha<\frac{4}{N-2s}$ such that $\sigma>0$.

For case $(i)$, we can choose $T$ sufficiently small so that $C(T+T^{\sigma}M^{\alpha})\leq\frac{1}{2}$, and deduce from \eqref{X3} that as $n\rightarrow \infty$,
\[X_{n}=\|u_{n}(t)-u(t)\|_{L^{\infty}(I,H^{s})}+\|u_{n}(t)-u(t)\|_{L^{\gamma}(I,B^{s}_{\rho,2})}\leq C\|\phi_{n}-\phi\|_{H^{s}}\rightarrow 0,\]
so the solution flow is locally Lipschitz.

For case $(ii)$, we can use the Kato's method, but we will apply a direct way of proving based on Lemma \ref{lem6} first. By (\ref{Kato2}), (\ref{class2}) and applying H\"{o}lder's estimates, we obtain
\begin{equation}\label{KATO}
\|K(u_{n},u)\|_{L^{\gamma'}(I)}\leq CT^{\sigma}M^{[s]+1}\|u_{n}-u\|^{\alpha-[s]}_{L^{\gamma}(I,L^{\rho^{\ast}_{0}})},
\end{equation}
where $\sigma=\frac{4-\alpha(N-2s)}{4}$. Using \eqref{KATO} combined with \eqref{X4}, we get
\begin{equation}\label{Xn1}
X_{n}\leq C\{\|\phi_{n}-\phi\|_{H^{s}}+\|u_{n}-u\|^{\alpha-[s]}_{L^{\gamma}((-T,T),L^{\rho^{\ast}_{0}})}\},
\end{equation}
for $T$ sufficiently small. As a consequence of Lemma \ref{lem6}, we know if $\alpha> \frac{4\{s\}}{N-2s}$, then
\[\|u_{n}-u\|_{L^{\gamma}((-T,T),L^{\rho^{\ast}_{0}})}\rightarrow 0, \,\,\, as \,\,\, n\rightarrow\infty,\]
which yields the desired convergence: $X_{n}\rightarrow 0$ as $n\rightarrow \infty$, by \eqref{Xn1}.

To deal with the cases that $[s]<\alpha\leq\frac{4\{s\}}{N-2s}$, we resort to Kato's method.  It follows from (\ref{class1}), \eqref{Kato2} and H\"{o}lder's estimates that
\begin{equation}\label{control remainder}
K(u_{n},u)\leq C\|u\|^{[s]+1}_{B^{s}_{\rho,2}}(\|u_{n}\|^{\alpha-[s]}_{L^{\rho^{\ast}_{0}}}+\|u\|^{\alpha-[s]}_{L^{\rho^{\ast}_{0}}}).
\end{equation}
We can also deduce from \eqref{X4} that
\[X_{n}\leq C\{\|\phi_{n}-\phi\|_{H^{s}}+\|K(u_{n},u)\|_{L^{\gamma'}(-T,T)}\}\]
for $T$ sufficiently small. Therefore, to show  $X_{n}\to 0$ it suffices to show that
\begin{equation}\label{Kato5}
    \|K(u_{n},u)\|_{L^{\gamma'}(-T,T)}\rightarrow 0, \,\,\, as\,\,\, n\rightarrow \infty.
\end{equation}
To show (\ref{Kato5}) we argue by contradiction. Suppose that there exist
$\varepsilon>0$, and a subsequence,
which is still denoted by $\{u_{n}\}_{n\geq 1}$, such that
\begin{equation}\label{Kato6}
    \|K(u_{n},u)\|_{L^{\gamma'}(-T,T)}\geq \varepsilon.
\end{equation}
Since $\frac{(\alpha-[s])\gamma}{\gamma-[s]-2}<\gamma$, using
(\ref{equa100})(taking $j=0$ there) and by possibly extracting a subsequence, we can
assume that $u_{n}\rightarrow u$ a.e. on $(-T,T)\times\mathbb{R}^{N}$ and that
\[\|u_{n+1}-u_{n}\|_{L^{\frac{(\alpha-[s])\gamma}{\gamma-[s]-2}}(I,L^{\rho^{\ast}_{0}}(\mathbb{R}^{N}))}<2^{-n}.\]
As before(subcase 1), if we set $\omega=|u_{1}|+\sum_{n=1}^{\infty}|u_{n+1}-u_{n}|$, then $\omega\in
L^{\frac{(\alpha-[s])\gamma}{\gamma-[s]-2}}(I,L^{\rho^{\ast}_{0}}(\mathbb{R}^{N}))$ and $|u_{n}|\leq \omega$ a.e. on $(-T,T)\times\mathbb{R}^{N}$. Therefore, by \eqref{control remainder} and Young's inequality, there is a constant $C>0$ such that
\[K(u_{n},u)^{\gamma'}\leq C(\|\omega\|^{\frac{(\alpha-[s])\gamma}{\gamma-[s]-2}}_{L^{\rho^{\ast}_{0}}}+\|u\|^{\frac{(\alpha-[s])\gamma}{\gamma-[s]-2}}_{L^{\rho^{\ast}_{0}}}
+\|u\|^{\gamma}_{B^{s}_{\rho,2}})\in L^{1}((-T,T)).\]

On the other hand, note that by (\ref{class1}), we have
\[|D^{[s]+1}(g_{1}(u_{n})-g_{1}(u))|^{\frac{\rho^{\ast}_{0}}{\alpha-[s]}}\leq C(|u_{n}|^{\rho^{\ast}_{0}}+|u|^{\rho^{\ast}_{0}});\]
note also that since $g_{1}\in C^{[s]+1}(\mathbb{C},\mathbb{C})$, $D^{[s]+1}g_{1}$ is continuous, hence we can deduce from $u_{n}\rightarrow u$ that $D^{[s]+1}g_{1}(u_{n})\rightarrow D^{[s]+1}g_{1}(u)$. Therefore, by dominated convergence theorem and a standard contradiction argument as above, after possibly extracting a subsequence again, we can obtain from $u_{n}(t)\rightarrow u(t)$ in $L^{\rho^{\ast}_{0}}(\mathbb{R}^{N})$ for a.a. $t\in(-T,T)$ that $\|D^{[s]+1}(g_{1}(u_{n})-g_{1}(u))\|_{L^{\frac{\rho^{\ast}_{0}}{\alpha-[s]}}}\rightarrow0$ for a.a. $t\in(-T,T)$, which implies that $K(u_{n},u)\rightarrow 0$ as $n\rightarrow \infty$ for a.a. $t\in(-T,T)$ by (\ref{Kato2}).

Now we can use dominated convergence theorem to infer that
$$\|K(u_{n},u)\|_{L^{\gamma'}(-T,T)}\rightarrow 0,$$
which contradicts (\ref{Kato6}). Hence (\ref{Kato5}) holds.

Thus we have proved $X_{n}\rightarrow 0$ as $n\rightarrow \infty$ if $T$ is sufficiently small in both case (i) and (ii). The convergence for arbitrary admissible pair $(q,r)$ follows from Strichartz's estimates. The conclusions $(i)$ and $(ii)$ of Theorem \ref{th1} follow by iterating this property to cover any compact subset of $(-T_{min},T_{max})$. Moreover, if $g_{1}(u)$ is a polynomial in $u$ and $\bar{u}$, or if not, we assume further that $\alpha\geq [s]+1$, then the continuous dependence is locally Lipschitz. \\
\\
Case II. \,$1<s<N/2$ and $\alpha=\frac{4}{N-2s}$.\\

One can easily verify that the Dispersive and Strichartz's estimates for $(e^{it\Delta})_{t\in\mathbb{R}}$ and $(e^{it(\Delta-\lambda_{0})})_{t\in\mathbb{R}}$ are the same modulo at most a factor $e^{|\Im\lambda_{0}|T}$(see \cite{O}), thus we may assume $\lambda_{0}=0$ here without loss of generality, then the nonlinearity $g=g_{1}$ is of class $\mathcal{C}(\alpha,s)$(see Definition \ref{class}).

From (\ref{class1}), Proposition \ref{prop2}, Lemma \ref{alternative} and H\"{o}lder's estimates, it follows that
\[\|g(u)\|_{L^{\gamma'}(I,B^{s}_{\rho',2})}\leq C\|u\|^{\alpha+1}_{L^{\gamma}(I,B^{s}_{\rho,2})},\] and
\[\|g(u_{n})-g(u)\|_{L^{\gamma'}(I,L^{\rho'})}\leq C(\|u_{n}\|^{\alpha}_{L^{\gamma}(I,B^{s}_{\rho,2})}
+\|u\|^{\alpha}_{L^{\gamma}(I,B^{s}_{\rho,2})})\|u_{n}-u\|_{L^{\gamma}(I,L^{\rho})}.\]
Then one can solve the Cauchy problem $(1.1)$ by using the fixed point argument in the set
$$E=\{u\in
L^{\gamma}(I,B^{s}_{\rho,2}(\mathbb{R}^{N}));
\|u\|_{L^{\gamma}(I,B^{s}_{\rho,2})}\leq \eta\}$$
equipped with the distance $d(u,v)=\|u-v\|_{L^{\gamma}(I,L^{\rho})}$, where $\eta>0$ is sufficiently small. By applying Strichartz's estimates in Besov spaces, we get
\[\|u\|_{L^{q}(I,B^{s}_{r,2})}\leq \|e^{i\cdot \Delta}\phi\|_{L^{q}(I,B^{s}_{r,2})}+C\|u\|^{\alpha+1}_{L^{\gamma}(I,B^{s}_{\rho,2})},\]

\[\|u_{n}\|_{L^{q}(I,B^{s}_{r,2})}\leq \|e^{i\cdot \Delta}\phi_{n}\|_{L^{q}(I,B^{s}_{r,2})}+C\|u_{n}\|^{\alpha+1}_{L^{\gamma}(I,B^{s}_{\rho,2})},\]
and
\begin{equation*}
\begin{array}{ll}
  \|u_{n}-u\|_{L^{q}(I,L^{r})}\leq \|e^{i\cdot \Delta}(\phi_{n}-\phi)\|_{L^{q}(I,L^{r})}&\\
  \qquad\qquad\qquad\qquad+ C(\|u_{n}\|^{\alpha}_{L^{\gamma}(I,B^{s}_{\rho,2})}+\|u\|^{\alpha}_{L^{\gamma}(I,B^{s}_{\rho,2})})
  \|u_{n}-u\|_{L^{\gamma}(I,L^{\rho})},&
  \end{array}
\end{equation*}
where $I=(-T,T)$, $(q,r)$ is an arbitrary admissible pair and $C$ is a positive constant independent of $T$ and $n$. Considering $K$ larger than the constant $C$ appearing in the above three estimates for the particular choice of the admissible pair $(q,r)=(\gamma,\rho)$, there exists $\delta>0$ small enough such that
\begin{equation}\label{delta0}
    K(4\delta)^{\alpha}<1/2.
\end{equation}

Since by Strichartz's estimates in Besov spaces, for any admissible pair $(q,r)$, $\|e^{i\cdot\Delta}\phi\|_{L^{q}(I,B^{s}_{r,2})}\leq C\|\phi\|_{H^{s}}$, we have
\[\|e^{i\cdot\Delta}\phi_{n}\|_{L^{q}(I,B^{s}_{r,2})}\leq \|e^{i\cdot\Delta}\phi\|_{L^{q}(I,B^{s}_{r,2})}+C\|\phi_{n}-\phi\|_{H^{s}}.\]
Thus by dominated convergence theorem, if we choose $T>0$ sufficiently small such that $\|e^{i\cdot\Delta}\phi\|_{L^{\gamma}(I,B^{s}_{\rho,2})}<\delta$, then we also have
\[\|e^{i\cdot\Delta}\phi_{n}\|_{L^{\gamma}(I,B^{s}_{\rho,2})}<\delta\]
for $n$ sufficiently large. So we can construct solutions $u$ and $u_{n}$($n$ large enough) on interval $I$ by using the fixed point argument in the set
$$E=\{u\in L^{\gamma}(I,B^{s}_{\rho,2}(\mathbb{R}^{N})); \|u\|_{L^{\gamma}(I,B^{s}_{\rho,2})}\leq 2\delta\}$$
equipped with the distance $d(u,v)=\|u-v\|_{L^{\gamma}(I,L^{\rho})}$, which means $T_{max}(\phi)$, $T_{min}(\phi)>T$ and $T_{max}(\phi_{n}),T_{min}(\phi_{n})>T$ for $n$ large. Applying continuity arguments, we deduce easily that
\begin{equation}\label{continuity1}
    max\{\|u\|_{L^{\gamma}(I,B^{s}_{\rho,2})},\|u_{n}\|_{L^{\gamma}(I,B^{s}_{\rho,2})}\}\leq 2\delta,
\end{equation}
for all sufficiently large $n$. Furthermore, by Strichartz's estimates in Besov spaces, we see that given any admissible pair $(q,r)$, there exists a constant $C_{q,r}$ such that
\begin{equation}\label{continuity2}
    max\{\|u\|_{L^{q}(I,B^{s}_{r,2})},\|u_{n}\|_{L^{q}(I,B^{s}_{r,2})}\}\leq C_{q,r}\delta,
\end{equation}
for all sufficiently large $n$.

\newtheorem{clm}[thm]{Claim}

\begin{clm}\label{claim1}(Non-integer case). We claim that by possibly choosing $T$ smaller, for any non-integer $s\in(1,N/2)$, we have as $n\rightarrow\infty$,
\begin{equation}\label{critical}
u_{n}\rightarrow u \quad in \quad L^{\gamma}((-T,T),H^{j,\rho^{\ast}_{j}}(\mathbb{R}^{N}))
\end{equation}
for every $0\leq j\leq [s]$.
\end{clm}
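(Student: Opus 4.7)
The strategy is to carry over the argument of Lemma \ref{lem6} to the critical regime, with the essential modification that the time-decay factor $T^{\sigma}$ with $\sigma=\frac{4-\alpha(N-2s)}{4}$, which absorbed large constants in the subcritical case, is no longer available (it vanishes since $\alpha(N-2s)=4$). Its role is played instead by the smallness of the Strichartz norms given by \eqref{continuity1}: $\|u\|_{L^{\gamma}(I,B^{s}_{\rho,2})}$ and $\|u_{n}\|_{L^{\gamma}(I,B^{s}_{\rho,2})}$ are both $\leq 2\delta$ with $\delta$ as small as we like, upon possibly shrinking $T$ so that $\|e^{i\cdot\Delta}\phi\|_{L^{\gamma}(I,B^{s}_{\rho,2})}<\delta$ (the existence times $T_{max}(\phi),T_{min}(\phi_{n})$ are preserved by the fixed-point setup on $E$).

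Concretely, I will choose $\eta_{0}\in(0,1]$ and indices $\rho_{1}, \gamma_{1}, \tilde{a}, a, \tau$ as in the proof of Lemma \ref{lem6}, so that $\tau\in(2N/(N-2\{s\}),\rho^{\ast}_{[s]})$, the non-admissible Strichartz pairing $1/\tilde{a}+1/a=\delta(\tau)$ holds, and $B^{\{s\}}_{\tilde{\tau},2}\hookrightarrow L^{\tau}$ for some $2<\tilde{\tau}<\rho$. Apply Lemma \ref{lem9} to the difference equation \eqref{eq8}, and estimate $\||\nabla|^{[s]}(g(u_{n})-g(u))\|_{L^{\tilde{a}'}(I,L^{\tau'})}$ using the complex Leibniz rule for $|\nabla|^{\{s\}}$, the Kato-Ponce inequality (Proposition \ref{prop1}), the chain-rule bound (Lemma \ref{lem8}), the embeddings of Proposition \ref{prop2}, and the growth bound \eqref{class1} on $g$. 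The outcome is an inequality of the form
\begin{equation*}
\||\nabla|^{[s]}(u_{n}-u)\|_{L^{a}(I,L^{\tau})}\leq C\|\phi_{n}-\phi\|_{H^{s}}+C\bigl(\|u_{n}\|^{\alpha+1-\eta_{0}}_{L^{\gamma}(I,B^{s}_{\rho,2})}+\|u\|^{\alpha+1-\eta_{0}}_{L^{\gamma}(I,B^{s}_{\rho,2})}\bigr)\|\langle\nabla\rangle^{[s]}(u_{n}-u)\|^{\eta_{0}}_{Y},
\end{equation*}
where $Y$ is a space-time norm of Strichartz type in which convergence of $u_{n}-u$ is already available. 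The distribution of H\"older exponents should be arranged so that $Y\subset L^{q_{0}}(I,L^{2N/(N-2\{s\})})$ for some $q_{0}<\infty$; this is the one point where the $T^{\sigma}$-free accounting is delicate, but the freedom gained from $\sigma=0$ is precisely what allows such a choice. Then \eqref{equa101} (taking $j=[s]$ and using the identity $2^{\ast}_{[s]}=2N/(N-2\{s\})$) forces the remainder norm to zero.

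By \eqref{continuity1}, the prefactor in the second term is $\leq 2C(2\delta)^{\alpha+1-\eta_{0}}$, which is small; together with $\phi_{n}\to\phi$ in $H^{s}$, we deduce $\||\nabla|^{[s]}(u_{n}-u)\|_{L^{a}(I,L^{\tau})}\to 0$. Interpolating this convergence with the uniform Strichartz bounds \eqref{continuity2} over all admissible pairs, we upgrade it to $u_{n}\to u$ in $L^{\gamma}(I,H^{[s],\rho^{\ast}_{[s]}})$, exactly as in the closing paragraph of Lemma \ref{lem6}. Finally, since $[s]-N/\rho^{\ast}_{[s]}=j-N/\rho^{\ast}_{j}$ for all $0\leq j\leq[s]$, the Sobolev embedding $H^{[s],\rho^{\ast}_{[s]}}\hookrightarrow H^{j,\rho^{\ast}_{j}}$ yields \eqref{critical} for every $0\leq j\leq[s]$.

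The main obstacle is the exponent bookkeeping in the non-admissible Strichartz estimate: one must verify that the parameters $\eta_{0}, \rho_{1}, \gamma_{1}$ can be simultaneously chosen in the critical regime so that (i) the nonlinear estimate closes via H\"older in time with the remainder in a \emph{finite-exponent} time-Lebesgue space (so that \eqref{equa101} applies), and (ii) the $(\alpha+1-\eta_{0})$-power of the Strichartz norm provides the required smallness $\delta$. The condition $\alpha>4\{s\}/(N-2s)$, which in Lemma \ref{lem6} had to be assumed separately, is automatically satisfied here since $\alpha=4/(N-2s)>4\{s\}/(N-2s)$ for non-integer $s$; this is exactly why the non-integer hypothesis of the claim matters.
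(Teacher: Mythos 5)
Your plan deviates from what the critical case actually permits at the decisive step, and the deviation is fatal. You propose to reproduce the Lemma \ref{lem6} estimate, splitting the difference factor with an interpolation exponent $\eta_{0}$ so that the remainder lands in $Y\subset L^{q_{0}}(I,L^{2N/(N-2\{s\})})$ with $q_{0}<\infty$, and then to invoke \eqref{equa101}. But the exponent bookkeeping forbids this when $\alpha=\frac{4}{N-2s}$: in Lemma \ref{lem6} the remainder's time exponent is $p=\frac{4}{4-\alpha(N-2s)}$, which becomes $p=\infty$ at criticality, and this is not an artifact of one particular choice. If the output norm $\||\nabla|^{[s]}(u_{n}-u)\|_{L^{a}(I,L^{\tau})}$ is kept at regularity-$s$ scaling (i.e. $\frac{2}{a}=N(\frac12-\frac1\tau)-\{s\}$, which you need in order to interpolate back to the scale-invariant target $L^{\gamma}(I,H^{[s],\rho^{\ast}_{[s]}})$), then the spatial H\"older exponents must match exactly and a scaling count shows that the time budget closes with a nonnegative power of $T$ only if the remainder factor is itself scale invariant, i.e. $q_{0}=\infty$ at the spatial exponent $2N/(N-2\{s\})$. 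So the ``freedom gained from $\sigma=0$'' you invoke does not exist; criticality rigidifies the exponents rather than loosening them. And at $q_{0}=\infty$ neither \eqref{equa101} nor \eqref{weak convergence} gives convergence (only boundedness, since $C([-T,T],H^{s-\varepsilon})$ misses the endpoint Lebesgue exponent $2^{\ast}_{[s]}$), so the remainder cannot be forced to zero by prior information. Choosing a non-scale-invariant output norm does not help either: interpolation of a convergent norm of strictly subcritical scaling with merely bounded critical Strichartz norms never reaches the critical target norm.

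The paper closes this gap differently, and this is the idea your proposal is missing: since only the integer number $[s]$ of derivatives falls on the nonlinearity, the estimate of $\||\nabla|^{[s]}(g(u_{n})-g(u))\|_{L^{\tilde a'}(I,L^{\tau'})}$ is genuinely \emph{Lipschitz} in the difference, with the difference measured in the very same non-admissible norm $\||\nabla|^{[s]}(u_{n}-u)\|_{L^{a}(I,L^{\tau})}$ that appears on the left (in your notation, $\eta_{0}=1$ with $Y$ equal to the left-hand norm, not a lower-order space). This yields a self-improving inequality whose coefficient is $C(\|u_{n}\|^{\alpha}_{L^{\gamma_{1}}(I,B^{s}_{\rho_{1},2})}+\|u\|^{\alpha}_{L^{\gamma_{1}}(I,B^{s}_{\rho_{1},2})})\leq C\delta^{\alpha}$ by \eqref{continuity2}; taking $\delta$ (and $T$) small so that $C\delta^{\alpha}<1/2$, the term is absorbed and $\||\nabla|^{[s]}(u_{n}-u)\|_{L^{a}(I,L^{\tau})}\leq C\|\phi_{n}-\phi\|_{H^{s}}\to 0$, with no appeal to \eqref{equa101} at all; interpolation with \eqref{continuity2} then gives the claim, as in your closing step. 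A secondary point: the non-integer hypothesis is not used through the inequality $\alpha>4\{s\}/(N-2s)$ (which plays no role in this argument); it is needed because the construction of the non-admissible pair requires $0<\varepsilon N<1-\{s\}$, i.e. $\{s\}<1$ — for integer $s$ this fails and the paper must argue differently (Claim \ref{claim2}).
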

\begin{proof}
To prove our claim, fixing $\varepsilon>0$ such that \[0<\varepsilon
N<min\left\{1-\{s\},\frac{2}{N-2s}\right\},\] and for $N\geq 3$, we
define non-admissible pair $(a,\tau)$ by
\begin{equation}\label{nonadmissible}
    \tau=\frac{2N}{N-2+2\varepsilon N},\,\, a=\frac{2}{1-\{s\}-\varepsilon N},\,\, \tilde{a}=\frac{2}{1+\{s\}-\varepsilon N},
\end{equation}
and admissible pair $(\gamma_{1},\rho_{1})$ by
\begin{equation}\label{admissible}
    \gamma_{1}=\frac{4}{\varepsilon N(N-2s)},\,\, \rho_{1}=\frac{2}{1-\varepsilon (N-2s)}.
\end{equation}

It follows from \eqref{nonadmissible} and \eqref{admissible} that
\begin{equation}
    \frac{1}{a}+\frac{1}{\tilde{a}}=N(\frac{1}{2}-\frac{1}{\tau}),\,\, \frac{2}{a}=N(\frac{1}{2}-\frac{1}{\tau})-\{s\},\,\,
     \frac{1}{\tilde{a}'}=\frac{\alpha}{\gamma_{1}}+\frac{1}{a}.
\end{equation}
Since $2<\rho_{1}<N/s$, we can define indices $(\rho_{1})^{\ast}_{j}$ by $\frac{1}{(\rho_{1})^{\ast}_{j}}=\frac{1}{\rho_{1}}-\frac{s-j}{N}$ for any $0\leq j\leq [s]$. Consequently, we have the embedding $B^{s}_{\rho_{1},2}\hookrightarrow H^{j,(\rho_{1})^{\ast}_{j}}$ for any $0\leq j\leq [s]$ and $\frac{1}{\tau'}=\frac{\alpha}{(\rho_{1})^{\ast}_{0}}+\frac{1}{\tau}$. Set $\tilde{\tau}=\frac{2N}{N-2+2\{s\}+2\varepsilon N}$, then $(a,\tilde{\tau})$ is an admissible pair and $B^{\{s\}}_{\tilde{\tau},2}\hookrightarrow L^{\tau}$.

We recall the formula
\begin{equation*}
\begin{array}{ll}
\||\nabla|^{[s]}(g(u_{n})-g(u))\|_{L^{\tau'}}&\\
\leq\sum_{k=1}^{[s]}\sum_{|\beta_1|+\cdots+|\beta_k|=[s],|\beta_j|\geq1}
\{\|(g_{z\backslash\bar{z}}^{(k)}(u_n)-g_{z\backslash\bar{z}}^{(k)}(u))\prod_{j=1}^k\partial^{\beta_j}(u\backslash\bar{u})\|_{L^{\tau'}}&\\
\qquad+\sum_{(w_{1},\cdots,w_{k})}\|g_{z\backslash\bar{z}}^{(k)}(u_{n})\prod_{j=1}^k\partial^{\beta_j}w_j\|_{L^{\tau'}}\},&
\end{array}
\end{equation*}
where all the $w_j$'s are equal to $u_{n},\bar{u}_{n}$ or $u,\bar{u}$, except one which is equal to $u_{n}-u$ or its conjugate. Since $g$ is of class $\mathcal{C}(\alpha,s)$, we can deduce from the above formula, (\ref{class1}), Strichartz's estimates for non-admissible pairs(Lemma \ref{lem9}), the indices and embedding relationships obtained above and H\"{o}lder's inequality that
\begin{eqnarray*}
  &&\||\nabla|^{[s]}(u_{n}-u)\|_{L^{a}(I,L^{\tau})}\\
  &\leq& C\|e^{i\cdot\Delta}(\phi_{n}-\phi)\|_{L^{a}(I,B^{s}_{\tilde{\tau},2})}+C\||\nabla|^{[s]}(g(u_{n})-g(u))\|_{L^{\tilde{a}'}(I,L^{\tau'})}\\
&\leq&
C\|\phi_{n}-\phi\|_{H^{s}}+C(\|u_{n}\|^{\alpha}_{L^{\gamma_{1}}(I,B^{s}_{\rho_{1},2})}+\|u\|^{\alpha}_{L^{\gamma_{1}}(I,B^{s}_{\rho_{1},2})})
\||\nabla|^{[s]}(u_{n}-u)\|_{L^{a}(I,L^{\tau})}.
\end{eqnarray*}
Thus by (\ref{continuity2}), we have for all sufficiently large $n$,
\[\||\nabla|^{[s]}(u_{n}-u)\|_{L^{a}(I,L^{\tau})}\leq C\|\phi_{n}-\phi\|_{H^{s}}+C\delta^{\alpha}\||\nabla|^{[s]}(u_{n}-u)\|_{L^{a}(I,L^{\tau})}.\]
Therefore, by possibly choosing $\delta$ smaller, and choosing $T$ smaller if necessary, such that
\begin{equation}\label{delta1}
    C\delta^{\alpha}<1/2,
\end{equation}
we deduce that as $n\rightarrow \infty$, $\||\nabla|^{[s]}(u_{n}-u)\|_{L^{a}(I,L^{\tau})}\rightarrow 0$.

Since $\frac{2}{\gamma}=\frac{N}{2}-\frac{N}{\rho^{\ast}_{[s]}}-\{s\}$ and $\frac{2}{a}=\frac{N}{2}-\frac{N}{\tau}-\{s\}$, so by (\ref{continuity2}), there always exists some admissible pair $(q,r)$ and index $r^{\ast}_{[s]}$ defined by $\frac{1}{r^{\ast}_{[s]}}=\frac{1}{r}-\frac{\{s\}}{N}$ such that either $\rho\in (r,\tilde{\tau}]$ or $\rho\in[\tilde{\tau},r)$, and
\[\sup_{n\geq 1}\||\nabla|^{[s]}(u_{n}-u)\|_{L^{q}(I,L^{r^{\ast}_{[s]}})}<\infty,\]
then $\||\nabla|^{[s]}(u_{n}-u)\|_{L^{\gamma}(I,L^{\rho^{\ast}_{[s]}})}\rightarrow0$, as $n\rightarrow \infty$ follows from interpolation. Hence, by the convergence of Lebesgue norms, we have $u_{n}\rightarrow u$ in $L^{\gamma}(I,H^{[s],\rho^{\ast}_{[s]}})$ as $n\rightarrow \infty$. Note that by Proposition \ref{prop2}, we have embedding $H^{j,\rho^{\ast}_{j}}(\mathbb{R}^{N})\hookrightarrow H^{[s],\rho^{\ast}_{[s]}}(\mathbb{R}^{N})$ for every $0\leq j\leq[s]$, this closes our proof.
\end{proof}

\begin{clm}\label{claim2}(Integer case). We claim that for any integer $s\in(1,N/2)$, assume $g(u)$ is not a polynomial in $u$ and $\bar{u}$ and $s-1=[s]<\alpha<[s]+1=s$, then by possibly choosing $T$ smaller, we have as $n\rightarrow\infty$,
\begin{equation}\label{critical-1}
u_{n}\rightarrow u \quad in \quad L^{\gamma}((-T,T),H^{j,\rho^{\ast}_{j}}(\mathbb{R}^{N}))
\end{equation}
for every $0\leq j\leq [s]=s-1$.
\end{clm}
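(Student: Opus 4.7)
The plan is to mirror the architecture of Claim \ref{claim1}, but with two forced modifications dictated by $\{s\}=1$. First, the non-admissible pair $(a,\tau)$ defined by \eqref{nonadmissible}--\eqref{admissible} degenerates (the exponent $a=\frac{2}{1-\{s\}-\varepsilon N}$ becomes negative when $\{s\}=1$), so one must re-balance: I would take a different non-admissible pair $(a,\tau)$ linked to an admissible $(a,\tilde\tau)$ via the Sobolev embedding $B^{1}_{\tilde\tau,2}\hookrightarrow L^{\tau}$ (replacing the $B^{\{s\}}_{\tilde\tau,2}\hookrightarrow L^{\tau}$ of Claim \ref{claim1}). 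Second, since $[s]=s-1$, the expansion of $\nabla^{[s]}(g(u_n)-g(u))$ is now a pure integer Leibniz expansion; the pieces that were absorbed by the residual $|\nabla|^{\{s\}}$ in Claim \ref{claim1} must be handled directly, and for the top-order $k=[s]$ term the pointwise bound on $\nabla(g^{([s])}(u_n)-g^{([s])}(u))$ brings in $D^{[s]+1}g_1=D^{s}g_1$, which is only H\"{o}lder continuous (not Lipschitz) by \eqref{class2} in the regime $[s]<\alpha<[s]+1$. This is precisely why \eqref{class2} is invoked in the integer critical case.

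The steps are: (i) choose an admissible $(\gamma_1,\rho_1)$ with $2<\rho_1<N/s$ and an adapted non-admissible $(a,\tau)$ so that Lemma \ref{lem9} applies to $|\nabla|^{[s]}(u_n-u)$ in $L^a(I,L^\tau)$ and the Sobolev embeddings $B^s_{\rho_1,2}\hookrightarrow H^{j,(\rho_1)^{\ast}_j}$, $0\le j\le[s]$, hold; (ii) expand $\nabla^{[s]}(g(u_n)-g(u))$ by the integer Leibniz formula into terms of the two shapes
\begin{equation*}
(g^{(k)}_{z\backslash\bar z}(u_n)-g^{(k)}_{z\backslash\bar z}(u))\prod_j\partial^{\beta_j}(u\backslash\bar u),\qquad g^{(k)}_{z\backslash\bar z}(u_n)\prod_j\partial^{\beta_j}w_j,
\end{equation*}
with $|\beta_1|+\cdots+|\beta_k|=[s]$, $|\beta_j|\ge1$, $1\le k\le[s]$, and in the second shape exactly one $w_j=u_n-u$; estimate each in $L^{\tilde a'}(I,L^{\tau'})$ by the H\"{o}lder bookkeeping of subcase 2 of Case I, simplified by the absence of a fractional factor; (iii) isolate the $k=[s]$ top-order piece of the first shape, where the pointwise estimate of $\nabla(g^{([s])}(u_n)-g^{([s])}(u))$ produces the Kato remainder
\begin{equation*}
K(u_n,u)=\|u\|^{[s]+1}_{B^s_{\rho,2}}\|D^{[s]+1}g_1(u_n)-D^{[s]+1}g_1(u)\|_{L^{\rho^{\ast}_0/(\alpha-[s])}}
\end{equation*}
via \eqref{class2}, exactly as in \eqref{Kato2}; (iv) sum and use the smallness $\|u_n\|_{L^{\gamma_1}(I,B^s_{\rho_1,2})},\|u\|_{L^{\gamma_1}(I,B^s_{\rho_1,2})}\le 2\delta$ from \eqref{continuity2}, shrinking $\delta$ further if necessary so the contraction constant is $\le 1/2$, to absorb the Lipschitz part and arrive at
\begin{equation*}
\||\nabla|^{[s]}(u_n-u)\|_{L^a(I,L^\tau)}\le C\|\phi_n-\phi\|_{H^s}+C\|K(u_n,u)\|_{L^{\tilde a'}(I)};
\end{equation*}
(v) kill $\|K(u_n,u)\|_{L^{\tilde a'}(I)}\to0$ by the subsequence, dominating-function and dominated-convergence argument already used after \eqref{control remainder}, where $g_1\in C^{[s]+1}(\mathbb{C},\mathbb{C})$ gives pointwise convergence of $D^{[s]+1}g_1(u_n)$ to $D^{[s]+1}g_1(u)$ and \eqref{class1} supplies the domination; (vi) interpolate $\||\nabla|^{[s]}(u_n-u)\|_{L^a(I,L^\tau)}\to0$ against the uniform bound \eqref{continuity2} to obtain convergence in $L^\gamma(I,H^{[s],\rho^{\ast}_{[s]}})$, and then invoke Proposition \ref{prop2}(v) to conclude \eqref{critical-1} for every $0\le j\le[s]=s-1$.

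The main obstacle is Step (i): with $\{s\}=1$, the natural choice ``$(a,\tau)$ so that $(a,\tilde\tau)$ is admissible and $|\nabla|^1$ embeds $\tilde\tau$ into $\tau$'' risks pushing $\tau$ beyond the admissible range $2<\tau<\frac{2N}{N-2}$, so a genuinely different balance must be struck. Verifying that such a pair exists which simultaneously (a) makes every H\"{o}lder exponent in the estimate of $\||\nabla|^{[s]}(g(u_n)-g(u))\|_{L^{\tau'}}$ land in $(1,\infty)$ and respects Proposition \ref{prop2}(v); (b) keeps the Kato-remainder exponent $\rho^{\ast}_0/(\alpha-[s])$ finite (using $\alpha>[s]$, already in the hypothesis); and (c) is compatible with the smallness of $\delta$ from \eqref{delta0}--\eqref{delta1} under the critical rigidity $\alpha=\frac{4}{N-2s}$ (where no $T^\sigma$ flexibility is available), is the only real technical point, and is analogous to the parameter-choosing task already carried out in Claim \ref{claim1}.
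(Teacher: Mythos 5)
There is a genuine gap, and it sits exactly at the point you flag as ``the only real technical point'': in the critical integer case no pair $(a,\tau)$ of the kind your Step (i) requires exists, so the whole architecture borrowed from Claim \ref{claim1} cannot be set up. Since $\alpha=\frac{4}{N-2s}$, every norm you are allowed to use for smallness ($\|u\|_{L^{\gamma_1}(I,B^{s}_{\rho_1,2})}\leq 2\delta$, $\|\phi_n-\phi\|_{\dot H^{s}}$) is scale-invariant and no positive power of $T$ is available, so the norm $\||\nabla|^{[s]}(u_n-u)\|_{L^{a}(I,L^{\tau})}$ must itself be scale-invariant, i.e. $\frac{2}{a}+\frac{N}{\tau}=\frac{N}{2}-\{s\}$. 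With $\{s\}=1$ (and in fact the hypotheses force the single case $N=7$, $s=2$, $\alpha=\frac43$, which you did not notice), Lemma \ref{lem9} demands $2<\tau<\frac{2N}{N-2}=\frac{14}{5}$, hence $\frac{N}{\tau}>\frac52=\frac N2-\{s\}$ and $\frac2a<0$; equivalently, requiring $(a,\tilde\tau)$ admissible and $B^{1}_{\tilde\tau,2}\hookrightarrow L^{\tau}$ forces $\tilde\tau<2$. So ``re-balancing'' inside the Lebesgue/non-admissible Strichartz framework at exactly $[s]$ derivatives is impossible, not merely delicate. Your fallback, Step (v), also fails at criticality: the dominated-convergence treatment of $K(u_n,u)$ needs $u_n\to u$ in $L^{\beta}(I,L^{\rho^{\ast}_0})$ with the scaling-forced exponent $\beta=\frac{(\alpha-[s])\gamma}{\gamma-[s]-2}$, and since $\gamma=\alpha+2$ in the critical case this equals $\gamma$ itself, which is precisely the convergence Claim \ref{claim2} is supposed to furnish (the local theory only gives $L^{\gamma-\delta}$, see \eqref{equa100}); the argument becomes circular, and there is no $T^{\sigma}$ to compensate.

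The paper's proof is structurally different and avoids any Kato remainder: following Tao--Visan it works at a fractional number of derivatives strictly between $[s]=1$ and $1+\frac13$, namely $\frac54$, through the exotic norms $U$, $V$ of \eqref{exotic norms 1}--\eqref{exotic norms 2} and the Strichartz space $\dot W$. The exotic Strichartz estimate \eqref{exotic Strichartz estimate} is obtained by applying Lemma \ref{lem9} to each Littlewood--Paley piece and square-summing, and the nonlinear estimate \eqref{nonlinear estimate} is proved frequency-locally (splitting $w=w_{lo}+w_{hi}$, using \eqref{class2} through the translation bound \eqref{F-L16} and Bernstein, then Schur's test), so that the difference estimate is genuinely linear in $u_n-u$ measured in $U$ and is closed by the smallness $\delta^{4/3}$ alone; convergence in $L^{\gamma}(I,H^{j,\rho^{\ast}_j})$, $j=0,1$, then follows by interpolation with \eqref{continuity2}. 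If you want to repair your proposal, you would have to import this intermediate-regularity, frequency-localized machinery (or something equivalent); the integer Leibniz expansion at exactly $[s]$ derivatives plus dominated convergence cannot be made to work here.
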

\begin{proof}
We will prove our claim in the spirit of Tao and Visan \cite{Tao2}. First, we should note that the restrictions in Claim \ref{claim2} admit only one case, that is, $s=2$, $N=7$ and $\alpha=4/3$, so the nonlinearity $g$ is of class $\mathcal{C}(\frac{4}{3},2)$. Second, we have to avoid taking full derivatives of order 2, since this is what turns the nonlinearity from Lipschitz into just H\"{o}lder continuous of order $\frac{1}{3}$, we need to take fewer than $\frac{4}{3}$ but more than $1$ derivatives instead. To this end, we will use the norms $U=U(I\times\mathbb{R}^{7})$ and $V=V(I\times\mathbb{R}^{7})$ defined by
\begin{equation}\label{exotic norms 1}
    \|u\|_{U}=(\sum_{k\in\mathbb{Z}}2^{\frac{k}{2}}\|\Delta_{k}(\nabla u)\|_{L^{16}(I,L^{8/3}(\mathbb{R}^{7}))}^{2})^{1/2},
\end{equation}
\begin{equation}\label{exotic norms 2}
    \|f\|_{V}=(\sum_{k\in\mathbb{Z}}2^{\frac{k}{2}}\|\Delta_{k}(\nabla f)\|_{L^{\frac{16}{3}}(I,L^{8/5}(\mathbb{R}^{7}))}^{2})^{1/2},
\end{equation}
which require roughly $\frac{5}{4}$ degrees of differentiability. Choosing a particular admissible pair $(q_{1},r_{1})=(\frac{32}{3},\frac{112}{53})$, we will also need the Strichartz space $\dot{W}=\dot{W}(I\times\mathbb{R}^{N})$ defined as the closure of the test functions under the norm
\begin{equation}\label{Strichartz norms}
    \|u\|_{\dot{W}}=\|u\|_{L^{q_{1}}(I,\dot{B}^{2}_{r_{1},2}(\mathbb{R}^{7}))}.
\end{equation}
As the Littlewood-Paley operators commute with the free evolution, we can deduce from Strichartz's estimates for non-admissible pairs(Lemma \ref{lem9}) that, there exists a constant $C$ independent of $I$ such that for any $f\in V$,
\begin{equation*}
    \|\Delta_{k}(\nabla \int^{t}_{t_{0}}e^{i(t-s)\Delta}f(s)ds)\|_{L^{16}(I,L^{8/3}(\mathbb{R}^{7}))}\leq C\|\Delta_{k}(\nabla f)\|_{L^{\frac{16}{3}}(I,L^{8/5}(\mathbb{R}^{7}))}.
\end{equation*}
Squaring the above inequality, multiplying by $2^{\frac{k}{2}}$, and summing over all integer $k$'s, we obtain the exotic Strichartz estimate, that is, for any $f\in V$,
\begin{equation}\label{exotic Strichartz estimate}
    \|\int^{t}_{t_{0}}e^{i(t-s)\Delta}f(s)ds\|_{U}\leq C\|f\|_{V}.
\end{equation}

The proof of Lemma 3.4 in \cite{Tao2} can be adapted to give us the following frequency-localized nonlinear estimate
\begin{eqnarray}\label{f-l nonlinear}
&& \max\{\|\Delta_{k}(g^{(2)}_{z\backslash\bar{z}}(w)v\nabla u)\|_{L^{\frac{16}{3}}_{t}L^{\frac{8}{5}}_{x}},\|\Delta_{k}(g^{(2)}_{z\backslash\bar{z}}(w)\nabla v u)\|_{L^{\frac{16}{3}}_{t}L^{\frac{8}{5}}_{x}}\} \\
\nonumber &\leq&C\|w\|_{\dot{W}}^{1/3}\|v\|_{\dot{W}}\sum_{l\in\mathbb{Z}}\min\{1,2^{\frac{l-k}{3}}\}\|\Delta_{l}(\nabla u)\|_{L^{16}_{t}L^{8/3}_{x}}
\end{eqnarray}
for any $k\in\mathbb{Z}$, where all space-time norms are on $I\times\mathbb{R}^{7}$.

Indeed, by scaling, we only need to show (\ref{f-l nonlinear}) for $k=0$. To this end, by Littlewood-Paley decomposition(see Section 2), we first slip $w$ into its low-frequency part $w_{lo}=S_{<0}w$ and high-frequency part $w_{hi}=S_{\geq0}w$. Since $g$ is of class $\mathcal{C}(\frac{4}{3},2)$, we deduce from (\ref{class2}) that
\begin{equation}\label{pointwise-H2}
    g^{(2)}_{z\backslash\bar{z}}(w)=g^{(2)}_{z\backslash\bar{z}}(w_{lo})+O(|w_{hi}|^{\frac{1}{3}}),
\end{equation}
where $O$ denotes the Landau's symbol. By Minkowski's inequality, we have
\begin{equation}\label{F-L11}
\|\Delta_{0}(g^{(2)}_{z\backslash\bar{z}}(w)v\nabla u)\|_{L^{\frac{16}{3}}_{t}L^{\frac{8}{5}}_{x}}\leq \sum_{l\in\mathbb{Z}}\|\Delta_{0}(g^{(2)}_{z\backslash\bar{z}}(w)v\Delta_{l}(\nabla u))\|_{L^{\frac{16}{3}}_{t}L^{\frac{8}{5}}_{x}}.
\end{equation}
To bound the right-hand side of (\ref{F-L11}), by using (\ref{class1}), (\ref{Strichartz norms}) and H\"{o}lder's inequality, note that $\dot{B}^{2}_{r_{1},2}(\mathbb{R}^{7})\hookrightarrow L_{x}^{\frac{16}{3}}(\mathbb{R}^{7})$, we get
\begin{equation}\label{F-L12}
\sum_{l\geq-2}\|\Delta_{0}(g^{(2)}_{z\backslash\bar{z}}(w)v\Delta_{l}(\nabla u))\|_{L^{\frac{16}{3}}_{t}L^{\frac{8}{5}}_{x}}\leq C\|w\|_{\dot{W}}^{1/3}\|v\|_{\dot{W}}\sum_{l\geq-2}\|\Delta_{l}(\nabla u)\|_{L^{16}_{t}L^{8/3}_{x}}.
\end{equation}
On the other hand, by H\"{o}lder's and Bernstein estimates(see Lemma \ref{bernstein}), we have
\begin{eqnarray}\label{F-L13}
\nonumber &&\sum_{l\leq-3}\|\Delta_{0}(g^{(2)}_{z\backslash\bar{z}}(w)v\Delta_{l}(\nabla u))\|_{L^{\frac{16}{3}}_{t}L^{\frac{8}{5}}_{x}}\\
&\leq&C\|S_{\geq-1}(g^{(2)}_{z\backslash\bar{z}}(w)v)\|_{L^{8}_{t}L^{\frac{84}{25}}_{x}}\sum_{l\leq-3}\|\Delta_{l}(\nabla u)\|_{L^{16}_{t}L^{\frac{168}{55}}_{x}}\\
\nonumber &\leq&C\|S_{\geq-1}(g^{(2)}_{z\backslash\bar{z}}(w)v)\|_{L^{8}_{t}L^{\frac{84}{25}}_{x}}\sum_{l\leq-3}2^{\frac{l}{3}}
\|\Delta_{l}(\nabla u)\|_{L^{16}_{t}L^{8/3}_{x}}.
\end{eqnarray}
Since by (\ref{pointwise-H2}), we get
\begin{equation}\label{F-L14}
\|S_{\geq-1}(g^{(2)}_{z\backslash\bar{z}}(w)v)\|_{L^{8}_{t}L^{\frac{84}{25}}_{x}}\leq
\|S_{\geq-1}(g^{(2)}_{z\backslash\bar{z}}(w_{lo})v)\|_{L^{8}_{t}L^{\frac{84}{25}}_{x}}+
\|S_{\geq-1}(O(|w_{hi}|^{\frac{1}{3}})v)\|_{L^{8}_{t}L^{\frac{84}{25}}_{x}};
\end{equation}
moreover, by H\"{o}lder's inequality, Bernstein estimates and the embedding $\dot{B}^{2}_{r_{1},2}(\mathbb{R}^{7})\hookrightarrow\dot{H}^{1,\frac{112}{37}}(\mathbb{R}^{7})\hookrightarrow L_{x}^{\frac{16}{3}}(\mathbb{R}^{7})$, we have
\begin{equation}\label{F-L15}
\|S_{\geq-1}(O(|w_{hi}|^{\frac{1}{3}})v)\|_{L^{8}_{t}L^{\frac{84}{25}}_{x}}\leq C\|\nabla(w_{hi})\|^{1/3}_{L^{\frac{32}{3}}_{t}L^{\frac{112}{37}}_{x}}\|v\|_{L^{\frac{32}{3}}_{t}L^{\frac{16}{3}}_{x}}
\leq C\|w\|_{\dot{W}}^{1/3}\|v\|_{\dot{W}};
\end{equation}
therefore, we only have the task of estimating $\|S_{\geq-1}(g^{(2)}_{z\backslash\bar{z}}(w_{lo})v)\|_{L^{8}_{t}L^{\frac{84}{25}}_{x}}$.

One easily verifies that for any spatial function $F$ and $1\leq p\leq\infty$, we have the bound(see \cite{Tao2})
\begin{equation}\label{F-L16}
\|S_{\geq-1}F\|_{L_{x}^{p}}\leq C\sup_{|h|\leq1}\|\tau_{h}F-F\|_{L_{x}^{p}},
\end{equation}
where $\tau_{h}F(x):=F(x-h)$ is the translation operator. Note that $g$ is of class $\mathcal{C}(\frac{4}{3},2)$, by (\ref{class2}), we have
\begin{equation}\label{F-L17}
\tau_{h}g^{(2)}_{z\backslash\bar{z}}(w_{lo})-g^{(2)}_{z\backslash\bar{z}}(w_{lo})=O(|\tau_{h}w_{lo}-w_{lo}|^{\frac{1}{3}}).   \end{equation}
Thus we can deduce from the identity
\begin{equation}\label{F-L18}
    \tau_{h}(fg)-fg=(\tau_{h}f-f)\tau_{h}g+(\tau_{h}g-g)f
\end{equation}
for any functions $f$, $g$, (\ref{class1}), (\ref{F-L16}), (\ref{F-L17}), H\"{o}lder's inequality and the fundamental theorem of calculus that
\begin{eqnarray}\label{F-L19}
&&\|S_{\geq-1}(g^{(2)}_{z\backslash\bar{z}}(w_{lo})v)\|_{L^{\frac{84}{25}}_{x}}\\
\nonumber&\leq&C\sup_{|h|\leq1}\{\|\tau_{h}w_{lo}-w_{lo}\|^{\frac{1}{3}}_{L_{x}^{\frac{112}{37}}}\|\tau_{h}v\|_{L_{x}^{\frac{16}{3}}}+
\|\tau_{h}v-v\|^{\frac{1}{3}}_{L_{x}^{\frac{112}{37}}}\|v\|^{\frac{2}{3}}_{L_{x}^{\frac{16}{3}}}\|w_{lo}\|^{\frac{1}{3}}_{L_{x}^{\frac{16}{3}}}\}\\
\nonumber&\leq&C\{\|\nabla w_{lo}\|^{\frac{1}{3}}_{L_{x}^{\frac{112}{37}}}\|v\|_{L_{x}^{\frac{16}{3}}}+\|\nabla v\|^{\frac{1}{3}}_{L_{x}^{\frac{112}{37}}}\|v\|^{\frac{2}{3}}_{L_{x}^{\frac{16}{3}}}\|w_{lo}\|^{\frac{1}{3}}_{L_{x}^{\frac{16}{3}}}\}\\
\nonumber&\leq&C\|w\|_{\dot{B}^{2}_{r_{1},2}}^{1/3}\|v\|_{\dot{B}^{2}_{r_{1},2}}
\end{eqnarray}
and hence, it follows from H\"{o}lder's inequality on time that
\begin{equation}\label{F-L110}
\|S_{\geq-1}(g^{(2)}_{z\backslash\bar{z}}(w_{lo})v)\|_{L^{8}_{t}L^{\frac{84}{25}}_{x}}\leq C\|w\|_{\dot{W}}^{1/3}\|v\|_{\dot{W}}.
\end{equation}

By combining the estimates (\ref{F-L11})-(\ref{F-L15}) and (\ref{F-L110}), we infer that
\begin{equation}\label{F-L10}
\|\Delta_{0}(g^{(2)}_{z\backslash\bar{z}}(w)v\nabla u)\|_{L^{\frac{16}{3}}_{t}L^{\frac{8}{5}}_{x}}\leq C\|w\|_{\dot{W}}^{1/3}\|v\|_{\dot{W}}\sum_{l\in\mathbb{Z}}\min\{1,2^{\frac{l}{3}}\}\|\Delta_{l}(\nabla u)\|_{L^{16}_{t}L^{8/3}_{x}}.
\end{equation}

Similarly, we can estimate $\|\Delta_{0}(g^{(2)}_{z\backslash\bar{z}}(w)\nabla vu)\|_{L^{\frac{16}{3}}_{t}L^{\frac{8}{5}}_{x}}$. First, by Minkowski's inequality, we have
\begin{equation}\label{F-L21}
\|\Delta_{0}(g^{(2)}_{z\backslash\bar{z}}(w)\nabla vu)\|_{L^{\frac{16}{3}}_{t}L^{\frac{8}{5}}_{x}}\leq \sum_{l\in\mathbb{Z}}\|\Delta_{0}(g^{(2)}_{z\backslash\bar{z}}(w)\nabla v\Delta_{l}u)\|_{L^{\frac{16}{3}}_{t}L^{\frac{8}{5}}_{x}}.
\end{equation}
To bound the right-hand side of (\ref{F-L21}), on one hand, by using (\ref{class1}), (\ref{Strichartz norms}), H\"{o}lder's inequality and the embedding $\dot{B}^{2}_{r_{1},2}(\mathbb{R}^{7})\hookrightarrow\dot{H}^{1,\frac{112}{37}}(\mathbb{R}^{7})\hookrightarrow L_{x}^{\frac{16}{3}}(\mathbb{R}^{7})$, $\dot{H}^{1,\frac{8}{3}}(\mathbb{R}^{7})\hookrightarrow L_{x}^{\frac{56}{13}}(\mathbb{R}^{7})$, we get
\begin{equation}\label{F-L22}
\sum_{l\geq-2}\|\Delta_{0}(g^{(2)}_{z\backslash\bar{z}}(w)\nabla v\Delta_{l}u)\|_{L^{\frac{16}{3}}_{t}L^{\frac{8}{5}}_{x}}\leq C\|w\|_{\dot{W}}^{1/3}\|v\|_{\dot{W}}\sum_{l\geq-2}\|\Delta_{l}(\nabla u)\|_{L^{16}_{t}L^{8/3}_{x}};
\end{equation}
on the other hand, by H\"{o}lder's and Bernstein estimates, we have
\begin{eqnarray}\label{F-L23}
\nonumber &&\sum_{l\leq-3}\|\Delta_{0}(g^{(2)}_{z\backslash\bar{z}}(w)\nabla v\Delta_{l}u)\|_{L^{\frac{16}{3}}_{t}L^{\frac{8}{5}}_{x}}\\
&\leq&C\|S_{\geq-1}(g^{(2)}_{z\backslash\bar{z}}(w)\nabla v)\|_{L^{8}_{t}L^{\frac{84}{37}}_{x}}\sum_{l\leq-3}\|\Delta_{l}u\|_{L^{16}_{t}L^{\frac{168}{31}}_{x}}\\
\nonumber &\leq&C\|S_{\geq-1}(g^{(2)}_{z\backslash\bar{z}}(w)\nabla v)\|_{L^{8}_{t}L^{\frac{84}{37}}_{x}}\sum_{l\leq-3}2^{\frac{l}{3}}
\|\Delta_{l}(\nabla u)\|_{L^{16}_{t}L^{8/3}_{x}}.
\end{eqnarray}
Since by (\ref{pointwise-H2}), we get
\begin{eqnarray}\label{F-L24}
&&\|S_{\geq-1}(g^{(2)}_{z\backslash\bar{z}}(w)\nabla v)\|_{L^{8}_{t}L^{\frac{84}{37}}_{x}}\\
\nonumber&\leq&\|S_{\geq-1}(g^{(2)}_{z\backslash\bar{z}}(w_{lo})\nabla v)\|_{L^{8}_{t}L^{\frac{84}{37}}_{x}}+
\|S_{\geq-1}(O(|w_{hi}|^{\frac{1}{3}})\nabla v)\|_{L^{8}_{t}L^{\frac{84}{37}}_{x}};
\end{eqnarray}
moreover, by H\"{o}lder's inequality, Bernstein estimates and the embedding $\dot{B}^{2}_{r_{1},2}(\mathbb{R}^{7})\hookrightarrow\dot{H}^{1,\frac{112}{37}}(\mathbb{R}^{7})$, we have
\begin{equation}\label{F-L25}
\|S_{\geq-1}(O(|w_{hi}|^{\frac{1}{3}})\nabla v)\|_{L^{8}_{t}L^{\frac{84}{37}}_{x}}\leq C\|\nabla(w_{hi})\|^{1/3}_{L^{\frac{32}{3}}_{t}L^{\frac{112}{37}}_{x}}\|\nabla v\|_{L^{\frac{32}{3}}_{t}L^{\frac{112}{37}}_{x}}
\leq C\|w\|_{\dot{W}}^{1/3}\|v\|_{\dot{W}};
\end{equation}
therefore, we only have the task of estimating $\|S_{\geq-1}(g^{(2)}_{z\backslash\bar{z}}(w_{lo})\nabla v)\|_{L^{8}_{t}L^{\frac{84}{37}}_{x}}$.

We can deduce from (\ref{class1}), (\ref{F-L16}), (\ref{F-L17}), (\ref{F-L18}), H\"{o}lder's inequality and the fundamental theorem of calculus that
\begin{eqnarray}\label{F-L26}
&&\|S_{\geq-1}(g^{(2)}_{z\backslash\bar{z}}(w_{lo})\nabla v)\|_{L^{\frac{84}{37}}_{x}}\\
\nonumber&\leq&C\sup_{|h|\leq1}\{\|\Delta_{h}w_{lo}\|^{\frac{1}{3}}_{L_{x}^{\frac{112}{37}}}\|\nabla v\|_{L_{x}^{\frac{112}{37}}}+\|\Delta_{h}(\nabla v)\|^{\frac{1}{3}}_{L_{x}^{\frac{112}{53}}}\|\nabla v\|^{\frac{2}{3}}_{L_{x}^{\frac{112}{37}}}\|w_{lo}\|^{\frac{1}{3}}_{L_{x}^{\frac{16}{3}}}\}\\
\nonumber&\leq&C\{\|\nabla w_{lo}\|^{\frac{1}{3}}_{L_{x}^{\frac{112}{37}}}\|\nabla v\|_{L_{x}^{\frac{112}{37}}}+\|v\|^{\frac{1}{3}}_{\dot{H}^{2,r_{1}}}\|\nabla v\|^{\frac{2}{3}}_{L_{x}^{\frac{112}{37}}}\|w_{lo}\|^{\frac{1}{3}}_{L_{x}^{\frac{16}{3}}}\}\\
\nonumber&\leq&C\|w\|_{\dot{B}^{2}_{r_{1},2}}^{1/3}\|v\|_{\dot{B}^{2}_{r_{1},2}},
\end{eqnarray}
where $\Delta_{h}F:=\tau_{h}F-F$ is the first order difference operator. Hence, it follows from H\"{o}lder's inequality on time that
\begin{equation}\label{F-L27}
\|S_{\geq-1}(g^{(2)}_{z\backslash\bar{z}}(w_{lo})\nabla v)\|_{L^{8}_{t}L^{\frac{84}{37}}_{x}}\leq C\|w\|_{\dot{W}}^{1/3}\|v\|_{\dot{W}}.
\end{equation}

By combining the estimates (\ref{F-L21})-(\ref{F-L25}) and (\ref{F-L27}), we infer that
\begin{equation}\label{F-L20}
\|\Delta_{0}(g^{(2)}_{z\backslash\bar{z}}(w)\nabla vu)\|_{L^{\frac{16}{3}}_{t}L^{\frac{8}{5}}_{x}}\leq C\|w\|_{\dot{W}}^{1/3}\|v\|_{\dot{W}}\sum_{l\in\mathbb{Z}}\min\{1,2^{\frac{l}{3}}\}\|\Delta_{l}(\nabla u)\|_{L^{16}_{t}L^{8/3}_{x}}.
\end{equation}

It follows from \eqref{F-L10} and \eqref{F-L20} that \eqref{f-l nonlinear} holds for $k=0$, then by scaling, we know it holds for any $k\in\mathbb{Z}$, this completes the proof of \eqref{f-l nonlinear}.

We can rewrite \eqref{f-l nonlinear} as
\begin{eqnarray}\label{re f-l nonlinear}
&&2^{\frac{k}{4}}\|\Delta_{k}(\nabla(g_{z\backslash\bar{z}}(v)u))\|_{L^{\frac{16}{3}}(I,L^{8/5}(\mathbb{R}^{7}))} \\
\nonumber&\leq&C\|v\|_{\dot{W}}^{4/3}\sum_{l\in\mathbb{Z}}\min\{2^{\frac{k-l}{4}},2^{\frac{l-k}{12}}\}2^{\frac{l}{4}}\|\Delta_{l}(\nabla u)\|_{L^{16}(I,L^{8/3}(\mathbb{R}^{7}))}
\end{eqnarray}
for any $k\in\mathbb{Z}$. Therefore, we deduce from \eqref{exotic norms 1}, \eqref{exotic norms 2}, \eqref{re f-l nonlinear} and Schur's test the following nonlinear estimate:
\begin{equation}\label{nonlinear estimate}
    \|g_{z\backslash\bar{z}}(v)u\|_{V}\leq C\|v\|_{\dot{W}}^{4/3}\|u\|_{U},
\end{equation}
whenever the right-hand side makes sense.

Note that $(16,\frac{56}{27})$ is an admissible pair and $\dot{H}^{\frac{3}{4},\frac{56}{27}}(\mathbb{R}^{7})\hookrightarrow L^{\frac{8}{3}}(\mathbb{R}^{7})$, we deduce from exotic Strichartz estimate (\ref{exotic Strichartz estimate}) combined with nonlinear estimate (\ref{nonlinear estimate}) that
\begin{eqnarray*}
  \|u_{n}-u\|_{U}&\leq& C(\sum_{k\in\mathbb{Z}}\|e^{i\cdot\Delta}(\Delta_{k}(\phi_{n}-\phi))\|_{L^{16}(I,\dot{H}^{2,\frac{56}{27}})}^{2})^{1/2}
  +C\|(g(u_{n})-g(u))\|_{V}\\
&\leq&C\|\phi_{n}-\phi\|_{\dot{H}^{2}}+C(\|u_{n}\|^{4/3}_{\dot{W}}+\|u\|^{4/3}_{\dot{W}}) \|u_{n}-u\|_{U}.
\end{eqnarray*}
Thus by (\ref{continuity2}), we have for all sufficiently large $n$,
\[\|u_{n}-u\|_{U}\leq C\|\phi_{n}-\phi\|_{\dot{H}^{2}}+C\delta^{4/3}\|u_{n}-u\|_{U}.\]
Therefore, by possibly choosing $\delta$ smaller, and choosing $T$ smaller if necessary, such that
\begin{equation}\label{delta2}
    C\delta^{4/3}<1/2,
\end{equation}
we can deduce that as $n\rightarrow \infty$, $\|u_{n}-u\|_{U}\rightarrow 0$.

Since $\dot{H}^{\frac{5}{4},\frac{8}{3}}(\mathbb{R}^{7})\hookrightarrow \dot{H}^{1,\frac{56}{19}}(\mathbb{R}^{7})$, we deduce from (\ref{exotic norms 1}) and Minkowski's inequality that
\begin{equation}\label{convergence}
\|u_{n}-u\|_{L^{16}(I,\dot{H}^{1,\frac{56}{19}})}\leq C\|u_{n}-u\|_{L^{16}(I,\dot{H}^{\frac{5}{4},\frac{8}{3}})}\leq C\|u_{n}-u\|_{U}\rightarrow 0,
\end{equation}
as $n\rightarrow \infty$. Note that $(\gamma,\rho)=(\frac{10}{3},\frac{70}{29})$ and $\frac{2}{\gamma}=7(\frac{1}{2}-\frac{1}{\rho^{\ast}_{1}})-1$, it follows from (\ref{continuity2}), (\ref{convergence}) and interpolations that
\[\|\nabla(u_{n}-u)\|_{L^{\gamma}((-T,T),L^{\rho^{\ast}_{1}})}\rightarrow 0, \quad as \quad n\rightarrow \infty.\]
Hence, from $\dot{H}^{1,\rho^{\ast}_{1}}(\mathbb{R}^{7})\hookrightarrow L^{\rho^{\ast}_{0}}(\mathbb{R}^{7})$ and the convergence of Lebesgue norms, we deduce that as $n\rightarrow\infty$, $u_{n}\rightarrow u$ in $L^{\gamma}((-T,T),H^{j,\rho^{\ast}_{j}}(\mathbb{R}^{7}))$ for $j=0,1$, this closes our proof.
\end{proof}

Now let us fix $\delta>0$ small enough so that (\ref{delta0}), (\ref{delta1}) and (\ref{delta2}) hold. For sufficiently small $T>0$ satisfying $\|e^{i\cdot\Delta}\phi\|_{L^{\gamma}((-T,T),B^{s}_{\rho,2})}<\delta$, we are to show that $\|u_{n}-u\|_{L^{q}(I,B^{s}_{r,2})}\rightarrow 0$ as $n\rightarrow\infty$, where $I=(-T,T)$ and $(q,r)$ is an arbitrary admissible pair.

Let $Y_{n}(I):=\|u_{n}-u\|_{L^{\gamma}(I,B^{s}_{\rho,2})}$, arguing the same as in the subcritical case and noting that $T^{\sigma}=T^{\frac{4-\alpha(N-2s)}{4}}=1$, by (\ref{continuity1}), we have the following two cases(see the proof of Case I, more precisely, see (\ref{X1}), (\ref{X2}), (\ref{X3}) and (\ref{X4})): \\
Case (i) \, if $g_{1}(u)$ is a polynomial in $u$ and $\bar{u}$, or if not, we assume further that $\alpha\geq [s]+1$, then
\begin{equation}\label{Yn0}
Y_n\leq C\|\phi_{n}-\phi\|_{H^{s}}+C(4\delta)^{\alpha}Y_n;
\end{equation}
Case (ii) \, if $g_{1}$ is not a polynomial and $[s]<\alpha<[s]+1$, then
\begin{equation}\label{Yn1}
 Y_n\leq C\|\phi_{n}-\phi\|_{H^{s}}+C\{(4\delta)^{\alpha}Y_n+\|K(u_{n},u)\|_{L^{\gamma'}(-T,T)}\};
\end{equation}
where $n$ is large enough and $K(u_{n},u)$ is defined by \eqref{Kato} and \eqref{Kato2}.\\

For case (i), it follows from \eqref{delta0} that $C(4\delta)^{\alpha}\leq 1/2$, so we can deduce from \eqref{Yn0} that $Y_{n}\leq C\|\phi_{n}-\phi\|_{H^{s}}\rightarrow 0$, as $n\rightarrow\infty$, which implies that
\[\|g(u_{n})-g(u)\|_{L^{\gamma'}(I,B^{s}_{\rho',2})}\leq C(4\delta)^{\alpha}Y_n\leq C\|\phi_{n}-\phi\|_{H^{s}}\rightarrow 0,\]
as $n\rightarrow \infty$. Applying Strichartz's estimates, we conclude that for all admissible pair $(q,r)$, $\|u_{n}-u\|_{L^{q}(I,B^{s}_{r,2})}\rightarrow 0$, as $n\rightarrow \infty$, and the continuous dependence is locally Lipschitz.\\

As to case (ii), by making use of the convergence stated in Claim \ref{claim1} and Claim \ref{claim2}, we certainly can apply the contradiction argument as in the subcritical case(see Case I), which won't rely on the assumption (\ref{class2}) if $s$ is not an integer. It is completely similar to the subcritical case, so we omit the details. Here, for the sake of simplicity, we prefer to deal with the problem in a simple and direct way, which will rely on (\ref{class2}).

Indeed, it follows from \eqref{delta0} that $C(4\delta)^{\alpha}\leq 1/2$, so we can deduce from (\ref{Kato1}), (\ref{KATO}), (\ref{continuity1}) and (\ref{Yn1}) that
\begin{equation}\label{Yn2}
Y_n\leq C\{\|\phi_{n}-\phi\|_{H^{s}}+\|u_{n}-u\|^{\alpha-[s]}_{L^{\gamma}((-T,T),L^{\rho^{\ast}_{0}})}\}
\end{equation}
for $n$ large enough. As a consequence of Claim \ref{claim1} and Claim \ref{claim2}, we know
\[\|u_{n}-u\|_{L^{\gamma}((-T,T),L^{\rho^{\ast}_{0}})}\rightarrow 0, \quad as \quad n\rightarrow \infty,\]
which yields the desired convergence, by \eqref{Yn2}. The convergence for arbitrary admissible pair $(q,r)$ follows from Strichartz's estimates.

Thus we have proved that in both case (i) and (ii), there exists $0<T<T_{max}(\phi),T_{min}(\phi)$ sufficiently small such that if $\phi_{n}\rightarrow \phi$ in $H^{s}(\mathbb{R}^{N})$, then $T_{max}(\phi_{n}),T_{min}(\phi_{n})>T$ for all sufficiently large $n$ and $u_{n}\rightarrow u$ in $L^{q}((-T,T),B^{s}_{r,2}(\mathbb{R}^{N}))$ as $n\rightarrow \infty$ for every admissible pair $(q,r)$.

For simplicity, we only argue forwards in time, as arguing backwards in time can be handled similarly. Let $\tilde{T}_{+}$(resp., $\tilde{T}_{-}$) be the supremum of all $0<T<T_{max}(\phi)$(resp., $0<T<T_{min}(\phi)$) such that if $\phi_{n}\rightarrow \phi$ in $H^{s}(\mathbb{R}^{N})$, then $T_{max}(\phi_{n})>T$(resp., $T_{min}(\phi_{n})>T$) for all sufficiently large $n$ and $u_{n}\rightarrow u$ in $L^{q}((0,T),B^{s}_{r,2}(\mathbb{R}^{N}))$(resp., $L^{q}((-T,0),B^{s}_{r,2}(\mathbb{R}^{N}))$) as $n\rightarrow \infty$ for every admissible pair $(q,r)$. We have already proved that $\tilde{T}_{+}>0$, and by definition $\tilde{T}_{+}\leq T_{max}(\phi)$. We claim that $\tilde{T}_{+}=T_{max}(\phi)$. If not, then we have $\tilde{T}_{+}< T_{max}(\phi)$. Since $u\in C([0,\tilde{T}_{+}],H^{s}(\mathbb{R}^{N}))$, $\bigcup_{0\leq t\leq \tilde{T}_{+}}\{u(t)\}$ is a compact subset of $H^{s}(\mathbb{R}^{N})$ and hence $\bigcup_{0\leq t\leq \tilde{T}_{+}}\{u(t)\}$ has a finite ``$\varepsilon$-net" in $H^{s}(\mathbb{R}^{N})$ for any $\varepsilon>0$. Therefore, it follows from Strichartz's estimates that there exists $0<\tau<T_{max}(\phi)-\tilde{T}_{+}$ such that
\[\sup_{0\leq t\leq \tilde{T}_{+}}\|e^{i\cdot \Delta}u(t)\|_{L^{\gamma}((0,\tau),B^{s}_{\rho,2})}<\delta,\]
where $\delta>0$ is fixed to be small enough so that (\ref{delta0}), (\ref{delta1}) and (\ref{delta2}) hold. As $0<\tilde{T}_{+}< T_{max}(\phi)$, we can fix $0<T'<\tilde{T}_{+}$ such that $\tilde{T}_{+}<T'+\tau<T_{max}(\phi)$, then $T_{max}(\phi_{n})>T'$ for all sufficiently large $n$ and $u_{n}(T')\rightarrow u(T')$ in $H^{s}(\mathbb{R}^{N})$ as $n\rightarrow \infty$. Thus we can deduce from
\[\|e^{i\cdot \Delta}u(T')\|_{L^{\gamma}((0,\tau),B^{s}_{\rho,2})}<\delta\]
and the preceding arguments(apply forwards in time) that $T_{max}(u_{n}(T'))>\tau$ for all $n$ large enough and that $u_{n}(T'+\cdot)\rightarrow u(T'+\cdot)$ in $L^{q}((0,\tau),B^{s}_{r,2}(\mathbb{R}^{N}))$ for all admissible pair $(q,r)$. So we have $T_{max}(\phi_{n})>T'+\tau$ for all $n$ large enough and $u_{n}\rightarrow u$ in
$L^{q}((0,T'+\tau),B^{s}_{r,2}(\mathbb{R}^{N}))$ as $n\rightarrow \infty$ for all admissible pair $(q,r)$. Then by the definition of $\tilde{T}_{+}$, we obtain $T'+\tau\leq \tilde{T}_{+}$, which is a contradiction. Thus
$\tilde{T}_{+}=T_{max}(\phi)$. By a similar argument, we can show that $\tilde{T}_{-}=T_{min}(\phi)$.

Thus we see that for any $0<T<T_{max}(\phi)$ and $0<S<T_{min}(\phi)$, if $\phi_{n}\rightarrow \phi$ in $H^{s}(\mathbb{R}^{N})$, then $T_{max}(\phi_{n})>T$ and $T_{min}(\phi_{n})>S$ for all sufficiently large $n$ and $u_{n}\rightarrow u$ in $L^{q}((-S,T),B^{s}_{r,2}(\mathbb{R}^{N}))$ as $n\rightarrow \infty$ for all admissible pair $(q,r)$, which proves the conclusions $(i)$ and $(ii)$ of the Theorem \ref{th1}. Moreover, if $g_{1}(u)$ is a polynomial in $u$ and $\bar{u}$, or if not, we assume further that $[s]+1\leq\alpha=\frac{4}{N-2s}$, then the continuous dependence is locally Lipschitz. \\
\\
Case III. The borderline case $s=N/2$, $\alpha<\infty$.\\

When $s=N/2$, the embedding $H^{\frac{N}{2}}(\mathbb{R}^{N})\hookrightarrow L^{p}(\mathbb{R}^{N})$ for all $2\leq p<\infty$ makes if possible to obtain local existence for (sufficiently regular) nonlinearities with arbitrary polynomial growth. In particular, there is local existence in the model case $g(u)=\lambda|u|^{\alpha}u$ for any $\alpha>0$ such that $\alpha>[s]$(see \cite{b7}). Using Trudinger's inequality, one can also consider nonlinearities of exponential growth(see \cite{b17}).

The case $s=N/2$ may be regarded as the ``borderline" in two aspects. First, different from the cases $s<N/2$, no power behavior of interaction amounts to the critical nonlinearity at the level of $H^{\frac{N}{2}}$. Second, since the embedding $H^{\frac{N}{2}}\hookrightarrow L^{\infty}$ falls down, pointwise control of solutions falls beyond the scope of the $H^{\frac{N}{2}}$-theory, therefore, without any specific control on the growth of interaction, any argument similar to that of the $H^{s}$-theory with $s>N/2$ breaks down, even for local theory.

We will carry out our discussion by considering the following two cases respectively: N is even and N is odd.

Since $g_{1}$ is of class $\mathcal{C}(\alpha,s)$ with $s=N/2$ and $0<\alpha<\infty$, we can fix $\delta>0$ sufficiently small so that
\[\left\{
  \begin{array}{ll}
   0<\delta<\min\{\chi(\alpha-[s]),\frac{8}{N(\alpha+2)-4},\frac{2}{N-1}\}, & if \,\,\, N\geq3,\\
   \,&\\
   0<\delta<\min\{\alpha,\frac{4}{\alpha},1\}, & if \,\,\, N=2,\\
   \,&\\
   0<\delta<\min\{\alpha,\frac{8}{(\alpha-2)^{+}},1\}, & if \,\,\, N=1,
  \end{array}
\right.\]
where $\chi(x):=x$ if $x>0$ and $\chi(x):=1$ if $x\leq0$. Let $r=2+\delta$, $(q,r)$ be the corresponding admissible pair and let $p=(1+\frac{2}{\delta})(\alpha-[s])$ provided $\alpha>[s]$, then we have $r<N/[s]$ for $N\geq3$, $q>\alpha+2$, $r<p<\infty$ and
\begin{equation}\label{eq3.0}
\frac{1}{r'}=\frac{\alpha-[s]}{p}+\frac{1}{r}.
\end{equation}
Let us define
\[\tilde{X}_{n}(I):=\|u_{n}(t)-u(t)\|_{L^{\infty}(I,H^{s})}+\|u_{n}(t)-u(t)\|_{L^{q}(I,B^{s}_{r,2})},\]
where $I=(-T,T)$. Since $\|u_{n}\|_{L^{q}(I,B^{s}_{r,2})}$ is bounded, we can define
\begin{equation}\label{eq3.2}
M:=\|u\|_{L^{q}(I,B^{s}_{r,2})}+\sup_{n\geq 1}\|u_{n}\|_{L^{q}(I,B^{s}_{r,2})}< \infty
\end{equation}
such that $\|u\|_{L^{q}(I,B^{s}_{r,2})}\leq M$ and $\|u_{n}\|_{L^{q}(I,B^{s}_{r,2})}\leq M$ for any $n\geq1$.

First, we will consider the cases that the spatial dimension $N$ is even. Note that $[s]=\frac{N}{2}-1$ in such cases. By Strichartz's estimate, we have
\begin{equation}\label{eq3.1}
\tilde{X}_{n}\leq C\|\phi_{n}-\phi\|_{H^{s}}+C\{T\tilde{X}_{n}+\|g_{1}(u_{n})-g_{1}(u)\|_{L^{q'}(I,H^{s,r'})}\}.
\end{equation}
To get our conclusion, we are to prove that $\tilde{X}_{n}\rightarrow0$ as $n\rightarrow\infty$, thus we need to estimate $\|g_{1}(u_{n})-g_{1}(u)\|_{L^{q'}(I,H^{s,r'})}$.

For a start, we assume that $g_{1}$ is not a polynomial, then by Definition \ref{class}, we have $[s]<\alpha<\infty$, thus the exponent $p$ is well defined. Note that by \eqref{class1}, \eqref{eq3.0} and H\"{o}lder's inequality, we have
\[\|g_{1}(u_{n})-g_{1}(u)\|_{L^{r'}}\leq C(\|u_{n}\|_{L^{p}}+\|u\|_{L^{p}})^{\alpha+1-\frac{N}{2}}(\|u_{n}\|_{L^{\infty}}+\|u\|_{L^{\infty}})^{\frac{N}{2}-1}\|u_{n}-u\|_{L^{r}},\]
thus we deduce from H\"{o}lder's estimate on time, \eqref{eq3.2} and the embedding $B^{s}_{r,2}\hookrightarrow L^{p}$, $B^{s}_{r,2}\hookrightarrow L^{\infty}$ that
\begin{equation}\label{eq3.3}
\|g_{1}(u_{n})-g_{1}(u)\|_{L^{q'}(I,L^{r'})}\leq CT^{1-\frac{\alpha+2}{q}}M^{\alpha}\|u_{n}-u\|_{L^{q}(I,L^{r})}.
\end{equation}

For the part of derivative estimate, by $L^{p}$ boundedness of Riesz transforms, we have the formula
\begin{eqnarray}\label{eq3.4}
&&\||\nabla|^{\frac{N}{2}}(g_{1}(u_{n})-g_{1}(u))\|_{L^{r'}}\\
\nonumber&\leq&\sum_{k=1}^{N/2}\sum_{|\beta_1|+\cdots+|\beta_k|=\frac{N}{2},|\beta_j|\geq1}\{\|(g_{1,z\backslash\bar{z}}^{(k)}(u_n)-g_{1,z\backslash\bar{z}}^{(k)}(u))
\prod_{j=1}^k\partial^{\beta_j}(u\backslash\bar{u})\|_{L^{r'}}\\
\nonumber&&+\sum_{(w_{1},\cdots,w_{k})}\|g_{1,z\backslash\bar{z}}^{(k)}(u_{n})\prod_{j=1}^k\partial^{\beta_j}w_j\|_{L^{r'}}\}=:S_{1}+S_{2},
\end{eqnarray}
where $k\in\{1,\cdots,N/2\}$ and the $\beta_{j}$'s are multi-indices such that $\frac{N}{2}=|\beta_{1}|+\cdots+|\beta_{k}|$ and $|\beta_{j}|\geq 1$ for $j=1,\cdots,k$, all the $w_j$'s are equal to $u_{n},\bar{u}_{n}$ or $u,\bar{u}$, except one which is equal to $u_{n}-u$ or its conjugate. For any $k\in\{1,\cdots,N/2\}$, we define indices $r_{j}$($1\leq j\leq k$) by $r_{j}=\frac{Nr}{2|\beta_{j}|}$, then we have
\begin{equation}\label{eq3.5}
\frac{1}{r'}=\frac{\alpha+1-\frac{N}{2}}{p}+\sum_{j=1}^{k}\frac{1}{r_{j}}.
\end{equation}

Since $g_{1}$ is of class $\mathcal{C}(\alpha,s)$, for the estimate of $S_{1}$, we have two cases:\\
(i)\, if we assume further that $\alpha\geq [s]+1=\frac{N}{2}$, then we deduce from \eqref{class1}, \eqref{class2}, \eqref{eq3.5}, H\"{o}lder's estimate and the embedding $B^{s}_{r,2}\hookrightarrow L^{p}$, $B^{s}_{r,2}\hookrightarrow L^{\infty}$, $B^{s}_{r,2}\hookrightarrow H^{|\beta_{j}|,r_{j}}$ that for any $k\in\{1,\cdots,N/2\}$,
\begin{eqnarray*}
&&\|(g_{1,z\backslash\bar{z}}^{(k)}(u_n)-g_{1,z\backslash\bar{z}}^{(k)}(u))\prod_{j=1}^k\partial^{\beta_j}(u\backslash\bar{u})\|_{L^{r'}}\\
&\leq&C(\|u_{n}\|_{L^{p}}+\|u\|_{L^{p}})^{\alpha-\frac{N}{2}}(\|u_{n}\|_{L^{\infty}}+\|u\|_{L^{\infty}})^{\frac{N}{2}-k}\|u_{n}-u\|_{L^{p}}
\prod_{j=1}^k\|\partial^{\beta_j}u\|_{L^{r_{j}}}\\
&\leq&C(\|u_{n}\|^{\alpha}_{B^{s}_{r,2}}+\|u\|^{\alpha}_{B^{s}_{r,2}})\|u_{n}-u\|_{B^{s}_{r,2}},
\end{eqnarray*}
thus
\begin{equation}\label{eq3.6}
S_{1}\leq C(\|u_{n}\|^{\alpha}_{B^{s}_{r,2}}+\|u\|^{\alpha}_{B^{s}_{r,2}})\|u_{n}-u\|_{B^{s}_{r,2}};
\end{equation}
(ii) if $\frac{N}{2}-1=[s]<\alpha<[s]+1=N/2$, by (\ref{class1}), we argue as above and get that, for any $k\in\{1,\cdots,\frac{N}{2}-1\}$,
\[\|(g_{1,z\backslash\bar{z}}^{(k)}(u_n)-g_{1,z\backslash\bar{z}}^{(k)}(u))\prod_{j=1}^k\partial^{\beta_j}(u\backslash\bar{u})\|_{L^{r'}}
\leq C(\|u_{n}\|^{\alpha}_{B^{s}_{r,2}}+\|u\|^{\alpha}_{B^{s}_{r,2}})\|u_{n}-u\|_{B^{s}_{r,2}},\]
except when $k=N/2$, we deduce from \eqref{eq3.5} and H\"{o}lder's inequality that
\begin{eqnarray*}
&&\|(g_{1,z\backslash\bar{z}}^{(\frac{N}{2})}(u_n)-g_{1,z\backslash\bar{z}}^{(\frac{N}{2})}(u))\prod_{j=1}^{N/2}\partial^{\beta_j}(u\backslash\bar{u})\|_{L^{r'}}\\
&\leq&C\|D^{\frac{N}{2}}(g_{1}(u_{n})-g_{1}(u))\|_{L^{\frac{p}{\alpha-[s]}}}\prod_{j=1}^{N/2}\|\partial^{\beta_j}u\|_{L^{r_{j}}}\\
&\leq&C\|u\|_{B^{s}_{r,2}}^{[s]+1}\|D^{[s]+1}(g_{1}(u_{n})-g_{1}(u))\|_{L^{\frac{p}{\alpha-[s]}}},
\end{eqnarray*}
thus
\begin{equation}\label{eq3.7}
\begin{array}{ll}
S_1\leq C\{(\|u_{n}\|^{\alpha}_{B^{s}_{r,2}}+\|u\|^{\alpha}_{B^{s}_{r,2}})\|u_{n}-u\|_{B^{s}_{r,2}}
&\\
\qquad\qquad+\|D^{[s]+1}(g_{1}(u_{n})-g_{1}(u))\|_{L^{\frac{p}{\alpha-[s]}}}\|u\|^{[s]+1}_{B^{s}_{r,2}}\}.
\end{array}
\end{equation}

As to the estimate of $S_{2}$, note that all the $w_j$'s are equal to $u_{n},\bar{u}_{n}$ or $u,\bar{u}$, except one which is equal to $u_{n}-u$ or its conjugate, we deduce from \eqref{class1}, \eqref{eq3.5}, H\"{o}lder's inequality and Sobolev embedding that
\begin{eqnarray*}
S_{2}&\leq&C\sum_{k=1}^{N/2}\sum_{\sum_{j=1}^{k}|\beta_j|=\frac{N}{2},|\beta_j|\geq1}\sum_{(w_{1},\cdots,w_{k})}
\|u_{n}\|_{L^{p}}^{\alpha+1-\frac{N}{2}}\|u_{n}\|_{L^{\infty}}^{\frac{N}{2}-k}\prod_{j=1}^{k}\|\partial^{\beta_j}w_j\|_{L^{r_{j}}}\\
&\leq&C(\|u_{n}\|^{\alpha}_{B^{s}_{r,2}}+\|u\|^{\alpha}_{B^{s}_{r,2}})\|u_{n}-u\|_{B^{s}_{r,2}}.
\end{eqnarray*}

By inserting the estimates for $S_1$ and $S_{2}$ into \eqref{eq3.4} and applying H\"{o}lder's estimates on time, we have the following two cases for the estimate of $\|g_{1}(u_{n})-g_{1}(u)\|_{L^{q'}(I,\dot{H}^{s,r'})}$:\\
(i)\, if $g_{1}$ is not a polynomial and $\alpha\geq[s]+1=\frac{N}{2}$, then
\begin{eqnarray}\label{eq3.8}
&&\|g_{1}(u_{n})-g_{1}(u)\|_{L^{q'}(I,\dot{H}^{s,r'})}\\
\nonumber&\leq&CT^{1-\frac{\alpha+2}{q}}(\|u_{n}\|^{\alpha}_{L^{q}(I,B^{s}_{r,2})}+\|u\|^{\alpha}_{L^{q}(I,B^{s}_{r,2})})
\|u_{n}-u\|_{L^{q}(I,B^{s}_{r,2})};
\end{eqnarray}
(ii) if $g_{1}$ is not a polynomial and $\frac{N}{2}-1=[s]<\alpha<[s]+1=N/2$, then
\begin{equation}\label{eq3.9}
\begin{array}{ll}
\|g_{1}(u_{n})-g_{1}(u)\|_{L^{q'}(I,\dot{H}^{s,r'})}\leq C\{T^{1-\frac{\alpha+2}{q}}(\|u_{n}\|^{\alpha}_{L^{q}(I,B^{s}_{r,2})}&\\
\qquad\qquad+\|u\|^{\alpha}_{L^{q}(I,B^{s}_{r,2})})\|u_{n}-u\|_{L^{q}(I,B^{s}_{r,2})}+\|K(u_{n},u)\|_{L^{q'}(I)}\},&
\end{array}
\end{equation}
where the Kato's remainder term $K(u_{n},u)$ can be defined by
\begin{equation}\label{eq3.10}
K(u_{n},u)=\|D^{[s]+1}g_{1}(u_{n})-D^{[s]+1}g_{1}(u)\|_{L^{\frac{p}{\alpha-[s]}}}\|u\|^{[s]+1}_{B^{s}_{r,2}}.
\end{equation}

As to the cases that $g_{1}(u)$ is a polynomial in $u$ and $\bar{u}$, note that $\alpha\geq1$ in such cases, we define $\tilde{p}=1+\frac{2}{\delta}$, then we have $r<\tilde{p}<\infty$, $B^{s}_{r,2}\hookrightarrow L^{\tilde{p}}$ and
\[\frac{1}{r'}=\frac{1}{\tilde{p}}+\frac{1}{r}.\]
Then similar to the cases that $g_{1}$ is not a polynomial as above, by H\"{o}lder's estimate, it follows from a similar but much simpler argument that
\begin{eqnarray}\label{eq3.100}
&&\|g_{1}(u_{n})-g_{1}(u)\|_{L^{q'}(I,{H}^{s,r'})}\\
\nonumber&\leq&CT^{1-\frac{\alpha+2}{q}}(\|u_{n}\|^{\alpha}_{L^{q}(I,B^{s}_{r,2})}+\|u\|^{\alpha}_{L^{q}(I,B^{s}_{r,2})})
\|u_{n}-u\|_{L^{q}(I,B^{s}_{r,2})}.
\end{eqnarray}

Therefore, by inserting \eqref{eq3.3}, \eqref{eq3.8}, \eqref{eq3.9} and \eqref{eq3.100} into the original Strichartz's estimate \eqref{eq3.1} and using \eqref{eq3.2}, we can obtain estimate for $\tilde{X}_n$ by considering the following two cases respectively:\\
Case (i) \,\, if $g_{1}(u)$ is a polynomial in $u$ and $\bar{u}$, or if not, we assume further that $\alpha\geq [s]+1$, then
\begin{equation}\label{eq3.11}
\tilde{X}_n\leq C\|\phi_{n}-\phi\|_{H^{s}}+CT\tilde{X}_n+CT^{\sigma}M^{\alpha}\tilde{X}_n;
\end{equation}
Case (ii) \, if $g_{1}$ is not a polynomial and $[s]<\alpha<[s]+1$, then
\begin{equation}\label{eq3.12}
\tilde{X}_n\leq C\|\phi_{n}-\phi\|_{H^{s}}+C\{T\tilde{X}_n+T^{\sigma}M^{\alpha}\tilde{X}_n+\|K(u_{n},u)\|_{L^{q'}(-T,T)}\};
\end{equation}
where $\sigma=1-\frac{\alpha+2}{q}>0$.

For case $(i)$, we can choose $T$ sufficiently small so that $C(T+T^{\sigma}M^{\alpha})\leq\frac{1}{2}$, and deduce from \eqref{eq3.11} that as $n\rightarrow \infty$,
\[\tilde{X}_{n}=\|u_{n}(t)-u(t)\|_{L^{\infty}(I,H^{s})}+\|u_{n}(t)-u(t)\|_{L^{q}(I,B^{s}_{r,2})}\leq C\|\phi_{n}-\phi\|_{H^{s}}\rightarrow 0,\]
and hence the solution flow is locally Lipschitz.

As to case $(ii)$, we can see that the key point is to show that the Kato's remainder term $\|K(u_{n},u)\|_{L^{q'}(-T,T)}\rightarrow0$ as $n\rightarrow\infty$, which will yield the desired convergence of $\tilde{X}_{n}$ for $T$ small enough immediately. To this end, there are two different method for us to choose.

First, entirely similar to the subcritical case for $s<N/2$(see Case I), we can prove the convergence of $K(u_{n},u)$ by a standard contradiction argument based on dominated convergence theorem, which will only rely on (\ref{class1}). Since we have known that $u_{n}\rightarrow u$ in $L^{q}(I,B^{s-\varepsilon}_{r,2})$ for arbitrary $\varepsilon>0$, we deduce easily that $u_{n}\rightarrow u$ in $L^{q}(I,L^{p})$. Note that by (\ref{class1}) and \eqref{eq3.10}, we have
\[K(u_{n},u)\leq C\|u\|^{[s]+1}_{B^{s}_{r,2}}(\|u_{n}\|^{\alpha-[s]}_{L^{p}}+\|u\|^{\alpha-[s]}_{L^{p}}),\]
thus by $u_{n}\rightarrow u$ in $L^{q}(I,L^{p})$ and $q>\alpha+2$, we can construct a control function $\omega\in
L^{\frac{(\alpha-[s])q}{q-[s]-2}}(I,L^{p}(\mathbb{R}^{N}))$ such that $|u_{n}|\leq \omega$ a.e. on $(-T,T)\times\mathbb{R}^{N}$ and
\[K(u_{n},u)^{q'}\leq C(\|\omega\|^{\frac{(\alpha-[s])q}{q-[s]-2}}_{L^{p}}+\|u\|^{\frac{(\alpha-[s])q}{q-[s]-2}}_{L^{p}}
+\|u\|^{q}_{B^{s}_{r,2}})\in L^{1}((-T,T)).\]
Then we can apply the dominated convergence theorem to obtain a contradiction, we omit the rest of details here(refer to Case I).

Second, we can make use of assumption (\ref{class2}) to obtain a H\"{o}lder type estimate of $K(u_{n},u)$:
\[\|K(u_{n},u)\|_{L^{q'}(I)}\leq CT^{1-\frac{\alpha+2}{q}}\|u\|^{[s]+1}_{L^{q}(I,B^{s}_{r,2})}\|u_{n}-u\|^{\alpha-[s]}_{L^{q}(I,L^{p})}\]
and hence, it follows from $u_{n}\rightarrow u$ in $L^{q}(I,L^{p})$ that
\[\|K(u_{n},u)\|_{L^{q'}(-T,T)}\rightarrow0, \,\,\,\, as \,\,\,\, n\rightarrow\infty.\]

Therefore, for the cases that $N$ is even, we have proved $\tilde{X}_{n}\rightarrow 0$ as $n\rightarrow \infty$ if $T$ is sufficiently small in both case (i) and (ii).

Now we turn to the cases that the spatial dimension $N$ is odd. Note that $[s]=\frac{N-1}{2}$ in such cases. By Strichartz's estimate, we have
\begin{equation}\label{eq3.13}
\tilde{X}_{n}\leq C\|\phi_{n}-\phi\|_{H^{s}}+C\{T\tilde{X}_{n}+\|g_{1}(u_{n})-g_{1}(u)\|_{L^{q'}(I,B^{s}_{r',2})}\}.
\end{equation}
To get our conclusion, we are to prove that $\tilde{X}_{n}\rightarrow0$ as $n\rightarrow\infty$, thus we need to estimate $\|g_{1}(u_{n})-g_{1}(u)\|_{L^{q'}(I,B^{s}_{r',2})}$. By Lemma \ref{alternative}, we have
\begin{equation}\label{eq3.14}
\|f\|_{B^{s}_{r',2}}\sim\|f\|_{L^{r'}}+\sum_{j=1}^{N}(\int_{0}^{\infty}(t^{-\frac{1}{2}}\sup_{|y|\leq t}\|\Delta_{y}\partial_{x_{j}}^{[s]}f\|_{L^{r'}})^{2}\frac{dt}{t})^{1/2},
\end{equation}
where $\Delta_{y}$ denotes the first order difference operator. Therefore, we can see that the key ingredient is how to estimate $\|\Delta_{y}\partial_{x_{j}}^{[s]}(g_{1}(u_{n})-g_{1}(u))\|_{L^{r'}}$ for $j=1,\cdots,N$.

For a start, we assume that $g_{1}$ is not a polynomial, then by Definition \ref{class}, we have $[s]<\alpha<\infty$, thus the exponent $p$ is well defined.

If $N=1$, $s=\frac{1}{2}$, by \eqref{class1}, \eqref{eq3.0}, the fundamental theorem of calculus and H\"{o}lder's estimate, we have
\begin{eqnarray}\label{eq3.15}
&&\|\Delta_{y}(g_{1}(u_{n})-g_{1}(u))\|_{L^{r'}}\\
\nonumber&\leq&C\{\|\Delta_{y}(u_{n}-u)\int_{0}^{1}Dg_{1}([\tau_{y}u_{n},u_{n}]_{\theta})d\theta\|_{L^{r'}}\\
\nonumber&&+\|\Delta_{y}u\int_{0}^{1}[Dg_{1}([\tau_{y}u_{n},u_{n}]_{\theta})-Dg_{1}([\tau_{y}u,u]_{\theta})]d\theta\|_{L^{r'}}\}\\
\nonumber&\leq&C\{\|\Delta_{y}(u_{n}-u)\|_{L^{r}}\|u_{n}\|_{B^{s}_{r,2}}^{\alpha}\\
\nonumber&&+\|\Delta_{y}u\|_{L^{r}}\int_{0}^{1}\|Dg_{1}([\tau_{y}u_{n},u_{n}]_{\theta})-Dg_{1}([\tau_{y}u,u]_{\theta})\|_{L^{\frac{p}{\alpha}}}d\theta\},
\end{eqnarray}
where $\tau_{y}F(x):=F(x-y)$ is the translation operator and $[v,u]_{\theta}:=\theta v+(1-\theta)u$ for $0\leq\theta\leq1$. Moreover, if we assume further that $\alpha\geq[s]+1=1$, then by \eqref{class2} and $B^{s}_{r,2}\hookrightarrow L^{p}$, we have
\begin{equation}\label{eq3.150}
\begin{array}{ll}
\|\Delta_{y}(g_{1}(u_{n})-g_{1}(u))\|_{L^{r'}}\leq C\{\|u_{n}\|_{B^{s}_{r,2}}^{\alpha}\|\Delta_{y}(u_{n}-u)\|_{L^{r}}&\\
\qquad\qquad\qquad\qquad+\|\Delta_{y}u\|_{L^{r}}(\|u_{n}\|^{\alpha-1}_{B^{s}_{r,2}}+\|u\|^{\alpha-1}_{B^{s}_{r,2}})\|u_{n}-u\|_{B^{s}_{r,2}}\}.&
\end{array}
\end{equation}

Now we assume $N\geq3$, for any $j=1,\cdots,N$, let $\partial_{j}$ denotes the partial derivative $\partial_{x_{j}}$, by simple calculations, we get the following estimate
\begin{eqnarray*}
&&\|\Delta_{y}\partial_{j}^{[s]}(g_{1}(u_{n})-g_{1}(u))\|_{L^{r'}}\\
\nonumber&\leq&\sum_{k=1}^{[s]}\sum_{\sum_{1}^{k}\beta_i=[s],\beta_i\geq1}
\{\|\Delta_{y}(u_{n}-u)\int_{0}^{1}D^{k+1}g_{1}([\tau_{y}u_{n},u_{n}]_{\theta})d\theta\prod_{i=1}^{k}\partial_{j}^{\beta_{i}}\tau_{y}u\|_{L^{r'}}\\
\nonumber&&+\|\Delta_{y}u\int_{0}^{1}[D^{k+1}g_{1}([\tau_{y}u_{n},u_{n}]_{\theta})-D^{k+1}g_{1}([\tau_{y}u,u]_{\theta})]d\theta
\prod_{i=1}^{k}\partial_{j}^{\beta_{i}}\tau_{y}u\|_{L^{r'}}\\
\nonumber&&+\sum_{(v_{1},\cdots,v_{k})}\|(D^{k}g_{1}(\tau_{y}u_{n})-D^{k}g_{1}(u_{n}))\prod_{i=1}^{k}\partial_{j}^{\beta_{i}}\tau_{y}v_{i}\|_{L^{r'}}\\
\nonumber&&+\sum_{(w_{1},\cdots,w_{k})}\|(D^{k}g_{1}(u_{n})-D^{k}g_{1}(u))\prod_{i=1}^{k}\partial_{j}^{\beta_{i}}w_{i}\|_{L^{r'}}\\
\nonumber&&+\sum_{(\mu_{1},\cdots,\mu_{k})}\|D^{k}g_{1}(u)\prod_{i=1}^{k}\partial_{j}^{\beta_{i}}\mu_{i}\|_{L^{r'}}
+\sum_{(\nu_{1},\cdots,\nu_{k})}\|D^{k}g_{1}(u)\prod_{i=1}^{k}\partial_{j}^{\beta_{i}}\nu_{i}\|_{L^{r'}}\}\\
\nonumber&=:&G_{1}+G_{2}+G_{3}+G_{4}+G_{5}+G_{6},
\end{eqnarray*}
where $\tau_{y}F(x):=F(x-y)$ is the translation operator; all the $v_{i}$'s are equal to $u_{n}$ or $u$, except one which is equal to $u_{n}-u$; all the $w_{i}$'s are equal to $\tau_{y}u_{n}$ or $u_{n}$, except one which is equal to $\Delta_{y}u_{n}$; if $k=1$, $\mu_{1}=0$, if $2\leq k\leq[s]$, $\mu_{i}$'s are equal to $u_{n}$, $u$ or $\tau_{y}u_{n}$, $\tau_{y}u$, except two of them, one is equal to $\Delta_{y}u_{n}$, the other one is equal to $\tau_{y}(u_{n}-u)$ or $u_{n}-u$; all the $\nu_{i}$'s are equal to $\tau_{y}u$ or $u$, except one which is equal to $\Delta_{y}(u_{n}-u)$.

Since $g_{1}$ is of class $\mathcal{C}(\alpha,s)$, by \eqref{class1}, \eqref{eq3.0}, H\"{o}lder's estimates and the embedding $B^{s}_{r,2}\hookrightarrow L^{p}$, $B^{s}_{r,2}\hookrightarrow L^{\infty}$, $B^{s}_{r,2}\hookrightarrow H^{\beta_{i},\frac{N}{\beta_{i}}}$, we get the following estimate for $G_{1}$:
\begin{equation*}
\begin{array}{ll}
G_{1}\leq C\|\Delta_{y}(u_{n}-u)\|_{L^{\frac{rN}{N-r[s]}}}\|u_{n}\|_{L^{p}}^{\alpha-[s]}\sum_{k=1}^{[s]}\{\|u_{n}\|_{L^{\infty}}^{[s]-k}\sum_{\beta_1+\cdots+\beta_{k}=[s],\beta_i\geq1}&\\
\qquad\prod_{i=1}^{k}\|\partial_{j}^{\beta_{i}}u\|_{L^{N/\beta_{i}}}\}\leq C(\|u_{n}\|_{B^{s}_{r,2}}^{\alpha}+\|u\|_{B^{s}_{r,2}}^{\alpha})\sum_{j=1}^{N}\|\Delta_{y}\partial_{j}^{[s]}(u_{n}-u)\|_{L^{r}}.&
\end{array}
\end{equation*}

By \eqref{class1}, \eqref{class2}, \eqref{eq3.0} and H\"{o}lder's estimates, we get two cases for the estimate of $G_{2}$ respectively:\\
(i)\, if $g_{1}$ is not a polynomial and $\alpha\geq[s]+1=\frac{N+1}{2}$, then
\begin{equation*}
\begin{array}{ll}
G_{2}\leq C\|\Delta_{y}u\|_{L^{\frac{rN}{N-r[s]}}}(\|u_{n}\|_{L^{p}}+\|u\|_{L^{p}})^{\alpha-[s]-1}\|u_{n}-u\|_{L^{p}}\sum_{k=1}^{[s]}\{&\\
\qquad\qquad(\|u_{n}\|_{L^{\infty}}+\|u\|_{L^{\infty}})^{[s]-k}\sum_{\beta_1+\cdots+\beta_{k}=[s],\beta_i\geq1}
\prod_{i=1}^{k}\|\partial_{j}^{\beta_{i}}u\|_{L^{N/\beta_{i}}}\}&\\
\qquad\qquad\qquad\leq C(\|u_{n}\|_{B^{s}_{r,2}}+\|u\|_{B^{s}_{r,2}})^{\alpha-1}
\|u_{n}-u\|_{B^{s}_{r,2}}\sum_{j=1}^{N}\|\Delta_{y}\partial_{j}^{[s]}u\|_{L^{r}};&
\end{array}
\end{equation*}
(ii) if $g_{1}$ is not a polynomial and $\frac{N-1}{2}=[s]<\alpha<[s]+1=\frac{N+1}{2}$, then
\begin{equation*}
\begin{array}{ll}
G_{2}\leq C\{(\|u_{n}\|_{B^{s}_{r,2}}+\|u\|_{B^{s}_{r,2}})^{\alpha-1}
\|u_{n}-u\|_{B^{s}_{r,2}}+\int_{0}^{1}\|D^{[s]+1}g_{1}([\tau_{y}u_{n},u_{n}]_{\theta})&\\
\qquad\qquad-D^{[s]+1}g_{1}([\tau_{y}u,u]_{\theta})\|_{L^{\frac{p}{\alpha-[s]}}}d\theta
\|u\|^{[s]}_{B^{s}_{r,2}}\}\sum_{j=1}^{N}\|\Delta_{y}\partial_{j}^{[s]}u\|_{L^{r}}.&
\end{array}
\end{equation*}

For $G_{3}$, note that all the $v_{i}$'s are equal to $u_{n}$ or $u$, except one which is equal to $u_{n}-u$, we deduce from \eqref{class1}, \eqref{eq3.0} and H\"{o}lder's estimates that
\begin{equation*}
\begin{array}{ll}
G_{3}\leq C\|\Delta_{y}u_{n}\|_{L^{\frac{rN}{N-r[s]}}}\|u_{n}\|_{L^{p}}^{\alpha-[s]}\sum_{k=1}^{[s]}\{\|u_{n}\|_{L^{\infty}}^{[s]-k}&\\
\qquad\qquad\sum_{\beta_{1}+\cdots+\beta_k=[s],\beta_i\geq1}
\sum_{(v_{1},\cdots,v_{k})}\prod_{i=1}^{k}\|\partial_{j}^{\beta_{i}}v_{i}\|_{L^{N/\beta_{i}}}\}&\\
\qquad\qquad\qquad\leq C(\|u_{n}\|_{B^{s}_{r,2}}+\|u\|_{B^{s}_{r,2}})^{\alpha-1}
\|u_{n}-u\|_{B^{s}_{r,2}}\sum_{j=1}^{N}\|\Delta_{y}\partial_{j}^{[s]}u_{n}\|_{L^{r}}.&
\end{array}
\end{equation*}

For $k\in\{1,\cdots,[s]\}$ and $\beta_{i}\geq1$($1\leq i\leq k$) such that $\sum_{i=1}^{k}\beta_{i}=[s]$, we define indices $p_{i}$($1\leq i\leq k$) by $p_{1}=\frac{rN}{N-r([s]-\beta_{1})}$ and $p_{i}=\frac{N}{\beta_{i}}$ for $2\leq i\leq k$.

For the estimate of $G_{4}$, without loss of generality, we may assume $w_{1}=\Delta_{y}u_{n}$, then we deduce from \eqref{class1}, \eqref{eq3.0}, H\"{o}lder's estimates and Sobolev embedding $\dot{H}^{[s],r}\hookrightarrow\dot{H}^{\beta_{1},p_{1}}$, $B^{s}_{r,2}\hookrightarrow H^{\beta_{i},p_{i}}$ that
\begin{equation*}
\begin{array}{ll}
G_{4}\leq C\|u_{n}-u\|_{L^{\infty}}(\|u_{n}\|_{L^{p}}+\|u\|_{L^{p}})^{\alpha-[s]}\sum_{k=1}^{[s]}\{(\|u_{n}\|_{L^{\infty}}+\|u\|_{L^{\infty}})^{[s]-k}&\\
\qquad\qquad\sum_{\beta_{1}+\cdots+\beta_k=[s],\beta_i\geq1}\sum_{(w_{1},\cdots,w_{k})}\|\partial_{j}^{\beta_{1}}\Delta_{y}u_{n}\|_{L^{p_{1}}}
\prod_{i=2}^{k}\|\partial_{j}^{\beta_{i}}w_{i}\|_{L^{p_{i}}}\}&\\
\qquad\qquad\qquad\leq C(\|u_{n}\|_{B^{s}_{r,2}}+\|u\|_{B^{s}_{r,2}})^{\alpha-1}
\|u_{n}-u\|_{B^{s}_{r,2}}\sum_{j=1}^{N}\|\Delta_{y}\partial_{j}^{[s]}u_{n}\|_{L^{r}}.&
\end{array}
\end{equation*}

For $G_{5}$, we know if $k=1$, $\mu_{1}=0$; if $2\leq k\leq[s]$, we may assume without loss of generality that $\mu_{1}=\Delta_{y}u_{n}$ and $\mu_{2}$ is equal to $\tau_{y}(u_{n}-u)$ or $u_{n}-u$. It follows from \eqref{class1}, \eqref{eq3.0}, H\"{o}lder's estimates and Sobolev embedding that
\begin{equation*}
\begin{array}{ll}
G_{5}\leq C\|u\|_{L^{p}}^{\alpha-[s]}\sum_{k=2}^{[s]}\{\|u\|_{L^{\infty}}^{[s]+1-k}\sum_{\beta_{1}+\cdots+\beta_k=[s],\beta_i\geq1}\sum_{(\mu_{1},\cdots,\mu_{k})}&\\
\qquad\qquad\qquad\|\partial_{j}^{\beta_{1}}\Delta_{y}u_{n}\|_{L^{p_{1}}}
\prod_{i=2}^{k}\|\partial_{j}^{\beta_{i}}\mu_{i}\|_{L^{p_{i}}}\}&\\
\qquad\qquad\leq C(\|u_{n}\|_{B^{s}_{r,2}}+\|u\|_{B^{s}_{r,2}})^{\alpha-1}
\|u_{n}-u\|_{B^{s}_{r,2}}\sum_{j=1}^{N}\|\Delta_{y}\partial_{j}^{[s]}u_{n}\|_{L^{r}}.&
\end{array}
\end{equation*}

As to $G_{6}$, similarly, we may assume without loss of generality that $\nu_{1}=\Delta_{y}(u_{n}-u)$, then we deduce from \eqref{class1}, \eqref{eq3.0}, H\"{o}lder's estimates and Sobolev embedding that
\begin{equation*}
\begin{array}{ll}
G_{6}\leq C\|u\|_{L^{p}}^{\alpha-[s]}\sum_{k=1}^{[s]}\{\|u\|_{L^{\infty}}^{[s]+1-k}\sum_{\beta_{1}+\cdots+\beta_k=[s],\beta_i\geq1}\sum_{(\nu_{1},\cdots,\nu_{k})}&\\
\qquad\qquad\|\partial_{j}^{\beta_{1}}\Delta_{y}(u_{n}-u)\|_{L^{p_{1}}}
\prod_{i=2}^{k}\|\partial_{j}^{\beta_{i}}\nu_{i}\|_{L^{p_{i}}}\}&\\
\qquad\qquad\qquad\leq C\|u\|_{B^{s}_{r,2}}^{\alpha}\sum_{j=1}^{N}\|\Delta_{y}\partial_{j}^{[s]}(u_{n}-u)\|_{L^{r}}.&
\end{array}
\end{equation*}

Therefore, under the assumption that $g_{1}$ is not a polynomial, by combining the estimates of $G_{1}$-$G_{6}$, we get two cases for the estimate of $\|\Delta_{y}\partial_{x_{j}}^{[s]}(g_{1}(u_{n})-g_{1}(u))\|_{L^{r'}}$ for $N\geq3$, that is, for any $j=1,\cdots,N$,\\
(i)\, if $g_{1}$ is not a polynomial and $\alpha\geq[s]+1=\frac{N+1}{2}$, then
\begin{equation}\label{eq3.16}
\begin{array}{ll}
\|\Delta_{y}\partial_{x_{j}}^{[s]}(g_{1}(u_{n})-g_{1}(u))\|_{L^{r'}}\leq C\{(\|u_{n}\|_{B^{s}_{r,2}}^{\alpha}+\|u\|_{B^{s}_{r,2}}^{\alpha})&\\
\qquad\sum_{j=1}^{N}\|\Delta_{y}\partial_{j}^{[s]}(u_{n}-u)\|_{L^{r}}
+(\|u_{n}\|_{B^{s}_{r,2}}+\|u\|_{B^{s}_{r,2}})^{\alpha-1}&\\
\qquad\qquad\sum_{j=1}^{N}(\|\Delta_{y}\partial_{j}^{[s]}u_{n}\|_{L^{r}}
+\|\Delta_{y}\partial_{j}^{[s]}u\|_{L^{r}})\|u_{n}-u\|_{B^{s}_{r,2}}\};&
\end{array}
\end{equation}
(ii) if $g_{1}$ is not a polynomial and $\frac{N-1}{2}=[s]<\alpha<[s]+1=\frac{N+1}{2}$, then
\begin{equation}\label{eq3.17}
\begin{array}{ll}
\|\Delta_{y}\partial_{x_{j}}^{[s]}(g_{1}(u_{n})-g_{1}(u))\|_{L^{r'}}\leq C\{(\|u_{n}\|_{B^{s}_{r,2}}^{\alpha}+\|u\|_{B^{s}_{r,2}}^{\alpha})&\\
\qquad\sum_{j=1}^{N}\|\Delta_{y}\partial_{j}^{[s]}(u_{n}-u)\|_{L^{r}}
+(\|u_{n}\|_{B^{s}_{r,2}}+\|u\|_{B^{s}_{r,2}})^{\alpha-1}
\sum_{j=1}^{N}(\|\Delta_{y}\partial_{j}^{[s]}u_{n}\|_{L^{r}}&\\
\qquad\qquad+\|\Delta_{y}\partial_{j}^{[s]}u\|_{L^{r}})\|u_{n}-u\|_{B^{s}_{r,2}}
+\int_{0}^{1}\|D^{[s]+1}g_{1}([\tau_{y}u_{n},u_{n}]_{\theta})&\\
\qquad\qquad\qquad-D^{[s]+1}g_{1}([\tau_{y}u,u]_{\theta})\|_{L^{\frac{p}{\alpha-[s]}}}d\theta
\|u\|^{[s]}_{B^{s}_{r,2}}\sum_{j=1}^{N}\|\Delta_{y}\partial_{j}^{[s]}u\|_{L^{r}}\}.&
\end{array}
\end{equation}

Therefore, if $N$ is odd and the nonlinearity $g_{1}\in\mathcal{C}(\alpha,s)$ is not a polynomial, by inserting \eqref{eq3.15}, \eqref{eq3.150}, \eqref{eq3.16} and \eqref{eq3.17} into \eqref{eq3.14} and applying H\"{o}lder's inequality on time, we have the following two cases for the estimate of $\|g_{1}(u_{n})-g_{1}(u)\|_{L^{q'}(I,B^{s}_{r',2})}$:\\
(i)\, if $g_{1}$ is not a polynomial and $\alpha\geq[s]+1=\frac{N+1}{2}$, then
\begin{eqnarray}\label{eq3.18}
&&\|g_{1}(u_{n})-g_{1}(u)\|_{L^{q'}(I,B^{s}_{r',2})}\\
\nonumber&\leq&CT^{1-\frac{\alpha+2}{q}}(\|u_{n}\|^{\alpha}_{L^{q}(I,B^{s}_{r,2})}+\|u\|^{\alpha}_{L^{q}(I,B^{s}_{r,2})})
\|u_{n}-u\|_{L^{q}(I,B^{s}_{r,2})};
\end{eqnarray}
(ii) if $g_{1}$ is not a polynomial and $\frac{N-1}{2}=[s]<\alpha<[s]+1=\frac{N+1}{2}$, then
\begin{equation}\label{eq3.19}
\begin{array}{ll}
\|g_{1}(u_{n})-g_{1}(u)\|_{L^{q'}(I,B^{s}_{r',2})}\leq C\{T^{1-\frac{\alpha+2}{q}}(\|u_{n}\|^{\alpha}_{L^{q}(I,B^{s}_{r,2})}&\\
\qquad\qquad+\|u\|^{\alpha}_{L^{q}(I,B^{s}_{r,2})})\|u_{n}-u\|_{L^{q}(I,B^{s}_{r,2})}+\|K(u_{n},u)\|_{L^{q'}(I)}\},&
\end{array}
\end{equation}
where the Kato's remainder term $K(u_{n},u)$ can be defined by
\begin{equation}\label{eq3.20}
\begin{array}{ll}
K(u_{n},u):=\|u\|^{[s]+1}_{B^{s}_{r,2}}\int_{0}^{1}\|D^{[s]+1}g_{1}([\tau_{y}u_{n},u_{n}]_{\theta})&\\
\qquad\qquad\qquad\qquad\qquad\qquad-D^{[s]+1}g_{1}([\tau_{y}u,u]_{\theta})\|_{L^{\frac{p}{\alpha-[s]}}}d\theta.&
\end{array}
\end{equation}

As to the cases that $g_{1}(u)$ is a polynomial in $u$ and $\bar{u}$, note that $\alpha\geq1$ in such cases, we define $\tilde{p}=1+\frac{2}{\delta}$, then we have $r<\tilde{p}<\infty$, $B^{s}_{r,2}\hookrightarrow L^{\tilde{p}}$ and
\[\frac{1}{r'}=\frac{1}{\tilde{p}}+\frac{1}{r}.\]
Then similar to the arguments for the cases that $g_{1}$ is not a polynomial as above, by H\"{o}lder's estimates, it follows from a similar but much simpler argument that
\begin{eqnarray}\label{eq3.21}
&&\|g_{1}(u_{n})-g_{1}(u)\|_{L^{q'}(I,{B}^{s}_{r',2})}\\
\nonumber&\leq&CT^{1-\frac{\alpha+2}{q}}(\|u_{n}\|^{\alpha}_{L^{q}(I,B^{s}_{r,2})}+\|u\|^{\alpha}_{L^{q}(I,B^{s}_{r,2})})
\|u_{n}-u\|_{L^{q}(I,B^{s}_{r,2})}.
\end{eqnarray}

Therefore, by inserting \eqref{eq3.18}, \eqref{eq3.19} and \eqref{eq3.21} into the original Strichartz's estimate \eqref{eq3.13} and using \eqref{eq3.2}, we can obtain estimate for $\tilde{X}_n$ by considering the following two cases respectively:\\
Case (i) \,\, if $g_{1}(u)$ is a polynomial in $u$ and $\bar{u}$, or if not, we assume further that $\alpha\geq [s]+1$, then
\begin{equation}\label{eq3.11}
\tilde{X}_n\leq C\|\phi_{n}-\phi\|_{H^{s}}+CT\tilde{X}_n+CT^{\sigma}M^{\alpha}\tilde{X}_n;
\end{equation}
Case (ii) \, if $g_{1}$ is not a polynomial and $[s]<\alpha<[s]+1$, then
\begin{equation}\label{eq3.12}
\tilde{X}_n\leq C\|\phi_{n}-\phi\|_{H^{s}}+C\{T\tilde{X}_n+T^{\sigma}M^{\alpha}\tilde{X}_n+\|K(u_{n},u)\|_{L^{q'}(-T,T)}\};
\end{equation}
where $\sigma=1-\frac{\alpha+2}{q}>0$.

For case $(i)$, we can choose $T$ sufficiently small so that $C(T+T^{\sigma}M^{\alpha})\leq\frac{1}{2}$, and deduce from \eqref{eq3.11} that as $n\rightarrow \infty$,
\[\tilde{X}_{n}=\|u_{n}(t)-u(t)\|_{L^{\infty}(I,H^{s})}+\|u_{n}(t)-u(t)\|_{L^{q}(I,B^{s}_{r,2})}\leq C\|\phi_{n}-\phi\|_{H^{s}}\rightarrow 0,\]
and hence the solution flow is locally Lipschitz.

As to case $(ii)$, we can see that the key point is to show that the Kato's remainder term $\|K(u_{n},u)\|_{L^{q'}(-T,T)}\rightarrow0$ as $n\rightarrow\infty$, which will yield the desired convergence of $\tilde{X}_{n}$ for $T$ small enough immediately. By making use of \eqref{class1} and the convergence $u_{n}\rightarrow u$ in $L^{q}(I,L^{p})$, we certainly can construct a control function $\omega\in L^{\frac{(\alpha-[s])q}{q-[s]-2}}(I,L^{p}(\mathbb{R}^{N}))$ and apply the contradiction argument as in Case I and the even dimensional case, which won't rely on the assumption (\ref{class2}). It is completely similar to the even dimensional case and Case I, so we omit the details here. We can also make use of assumption (\ref{class2}) to obtain a H\"{o}lder type estimate of $K(u_{n},u)$:
\[\|K(u_{n},u)\|_{L^{q'}(I)}\leq CT^{1-\frac{\alpha+2}{q}}\|u\|^{[s]+1}_{L^{q}(I,B^{s}_{r,2})}\|u_{n}-u\|^{\alpha-[s]}_{L^{q}(I,L^{p})}\]
and hence, it follows from $u_{n}\rightarrow u$ in $L^{q}(I,L^{p})$ that
\[\|K(u_{n},u)\|_{L^{q'}(-T,T)}\rightarrow0, \,\,\,\, as \,\,\,\, n\rightarrow\infty.\]

Therefore, for the cases that $N$ is odd, we have proved $\tilde{X}_{n}\rightarrow 0$ as $n\rightarrow \infty$ if $T$ is sufficiently small in both case (i) and (ii).

In a word, we have proved $\tilde{X}_{n}\rightarrow 0$ as $n\rightarrow \infty$ if $T$ is sufficiently small for both $N$ even and odd. The convergence for arbitrary admissible pair follows from Strichartz's estimates. The conclusions $(i)$ and $(ii)$ of Theorem \ref{th1} follow by iterating this property to cover any compact subset of $(-T_{min},T_{max})$. Moreover, if $g_{1}(u)$ is a polynomial in $u$ and $\bar{u}$, or if not, we assume further that $\alpha\geq [s]+1$, then the continuous dependence is locally Lipschitz. \\
\\
Case IV. $s>\frac{N}{2}$.\\

Since $s>N/2$, we know that $H^{s}=B^{s}_{2,2}\hookrightarrow L^{\infty}$ and $H^{s}$ is an algebra. Let $Z_n(I):=\|u_n-u\|_{L^\infty(I,H^{s})}$, where $I=(-T,T)$. We deduce from Strichartz's estimate that
\begin{equation}\label{eq4.1}
Z_n\leq C\|\varphi_n-\varphi\|_{H^s}+CTZ_n+C\|g_1(u_n)-g_1(u)\|_{L^{1}(I,H^{s})}.
\end{equation}
To get our conclusion, we are to prove that $Z_{n}\rightarrow0$ as $n\rightarrow\infty$, thus we need to estimate $\|g_1(u_n)-g_1(u)\|_{L^{1}(I,H^{s})}$.

Since $\|u_{n}\|_{L^\infty(I,H^s)}$ is bounded, we define
\begin{equation}\label{eq4.2}
M:=\|u\|_{L^\infty(I,H^s)}+\sup_{n\geq 1}\|u_{n}\|_{L^\infty(I,H^s)}< \infty
\end{equation}
such that $\|u\|_{L^\infty(I,H^s)}\leq M$ and $\|u_{n}\|_{L^\infty(I,H^s)}\leq M$ for any $n\geq1$.

First, let us consider the simpler case $s\in\mathbb{N}$. We have the formula
\begin{eqnarray}\label{eq4.3}
&&\||\nabla|^{s}(g_{1}(u_{n})-g_{1}(u))\|_{L^{2}}\\
\nonumber&\leq&\sum_{k=1}^{s}\sum_{|\beta_1|+\cdots+|\beta_k|=s,|\beta_j|\geq1}\{\|(g_{1,z\backslash\bar{z}}^{(k)}(u_n)-g_{1,z\backslash\bar{z}}^{(k)}(u))
\prod_{j=1}^k\partial^{\beta_j}(u\backslash\bar{u})\|_{L^{2}}\\
\nonumber&&+\sum_{(w_{1},\cdots,w_{k})}\|g_{1,z\backslash\bar{z}}^{(k)}(u_{n})\prod_{j=1}^k\partial^{\beta_j}w_j\|_{L^{2}}\}=:(I)+(II),
\end{eqnarray}
where $k\in \{1,\cdots,s\}$ and the $\beta_{j}$'s are multi-indices such that $s=|\beta_{1}|+\cdots+|\beta_{k}|$ and $|\beta_{j}|\geq 1$ for $j=1,\cdots,k$, all the $w_j$'s are equal to $u_{n},\bar{u}_{n}$ or $u,\bar{u}$, except one which is equal to $u_{n}-u$ or its conjugate. Since $g_{1}$ is of class $\mathcal{C}(\alpha,s)$, for the estimate of $(I)$, we have two cases:\\
(i)\, if $g_{1}\in C^{[s]+2}(\mathbb{C},\mathbb{C})$, then $D^{k}g_{1}$($k=1,\cdots,s$) is Lipschitz on bounded sets, thus we deduce from \eqref{eq4.3} and $H^{s}\hookrightarrow H^{|\beta_{j}|,\frac{2s}{|\beta_{j}|}}$ that
\begin{eqnarray*}
(I)&\leq&C_{M}\|u_{n}-u\|_{L^{\infty}}\sum_{k=1}^{s}\sum_{|\beta_1|+\cdots+|\beta_k|=s,|\beta_j|\geq1}\prod_{j=1}^{k} \|\partial^{\beta_j}u\|_{L^{2s/|\beta_{j}|}}\\
&\leq&C_{M}\|u_{n}-u\|_{H^{s}};
\end{eqnarray*}
(ii) otherwise, since $g_{1}$ is of class $\mathcal{C}(\alpha,s)$, we have $D^{k}g_{1}$($k=1,\cdots,s-1$) is Lipschitz on bounded sets, except $D^{s}g_{1}$, which is uniformly continuous on bounded sets, so we deduce from \eqref{eq4.3} and $H^{s}\hookrightarrow H^{|\beta_{j}|,\frac{2s}{|\beta_{j}|}}$ that
\begin{equation*}
(I)\leq C_{M}\{\|u_{n}-u\|_{H^{s}}+\varepsilon_{M}(\|u_{n}-u\|_{L^{\infty}})\},
\end{equation*}
where $\varepsilon_{M}(t)\rightarrow0$ as $t\downarrow0$. As to $(II)$, note that all the $w_j$'s are equal to $u_{n},\bar{u}_{n}$ or $u,\bar{u}$, except one which is equal to $u_{n}-u$ or its conjugate, by H\"{o}lder's eatimates, we have
\begin{eqnarray*}
(II)&\leq&C\sum_{k=1}^{s}\sum_{\sum_{j=1}^{k}|\beta_j|=s,|\beta_j|\geq1}\sum_{(w_{1},\cdots,w_{k})}\|g_{1,z\backslash
\bar{z}}^{(k)}(u_{n})\|_{L^{\infty}}\prod_{j=1}^{k} \|\partial^{\beta_j}w_{j}\|_{L^{2s/|\beta_{j}|}}\\
&\leq&C_{M}\|u_{n}-u\|_{H^{s}}.
\end{eqnarray*}
Therefore, by inserting the estimates of $(I)$ and $(II)$ into \eqref{eq4.3} and applying H\"{o}lder's inequality on time, we get two cases:\\
(i)\, if $g_{1}\in C^{[s]+2}(\mathbb{C},\mathbb{C})$, then
\begin{equation}\label{eq4.4}
\|g_1(u_n)-g_1(u)\|_{L^{1}(I,H^{s})}\leq CT\|u_{n}-u\|_{L^{\infty}(I,H^{s})};
\end{equation}
(ii) otherwise, we can deduce from the convergence in $L^{\infty}$ norm that
\begin{equation}\label{eq4.5}
\|g_1(u_n)-g_1(u)\|_{L^{1}(I,H^{s})}\leq\varepsilon_{n}+CT\|u_{n}-u\|_{L^{\infty}(I,H^{s})},
\end{equation}
where $\varepsilon_{n}\rightarrow0$ as $n\rightarrow\infty$.

Now we turn to the cases that $s$ is not an integer.

First, let us consider a special case, that is, the nonlinearity $g_{1}$ is of class $\mathcal{C}(\alpha,s)$ with a further assumption $g_{1}\in C^{[s]+2}(\mathbb{C},\mathbb{C})$. By the fundamental theorem of calculus, we can write
\begin{equation}\label{eq4.6}
g_{1}(u_{n})-g_{1}(u)=\int_{0}^{1}[g_{1z}([u_{n},u]_{\theta})(u_{n}-u)+g_{1\bar{z}}([u_{n},u]_{\theta})(\bar{u}_{n}-\bar{u})]d\theta,
\end{equation}
where $[u_{n},u]_{\theta}:=\theta u_{n}+(1-\theta)u$ for $0\leq\theta\leq1$. Thus, by Morse type inequality(see Lemma \ref{lem7}), we have
\begin{equation}\label{eq4.7}
\|g_{1}(u_{n})-g_{1}(u)\|_{\dot{H}^{s}}\leq C\int_{0}^{1}\|Dg_{1}([u_{n},u]_{\theta})\|_{\dot{H}^{s}\cap L^{\infty}}\|u_{n}-u\|_{H^{s}}d\theta.
\end{equation}

By resorting to the technique of multilinear paradifferential expansions(see \cite{Tataru}), we have the following Schauder estimate for homogeneous spaces.
\begin{lem}\label{Schauder}(Schauder estimate).
Let $V$ be a finite-dimensional normed vector space and let $f\in\dot{H}_{x}^{s}(\mathbb{R}^{N}\rightarrow V)\bigcap L_{x}^{\infty}(\mathbb{R}^{N}\rightarrow V)$ for some $s\geq0$. If $F\in C_{loc}^{[s]+1}(V\rightarrow V)$, then $F(f)\in\dot{H}_{x}^{s}(\mathbb{R}^{N}\rightarrow V)$ as well. Moreover, we have a bound of the form
\[\|F(f)\|_{\dot{H}_{x}^{s}(\mathbb{R}^{N})}\leq C_{F,\|f\|_{L_{x}^{\infty}},V,s,N}\|f\|_{\dot{H}_{x}^{s}(\mathbb{R}^{N})}.\]
\end{lem}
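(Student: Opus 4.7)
The plan is to adapt the classical paralinearization of Meyer and Bony, combined with a telescoping dyadic expansion in the spirit of Tataru. Since $\dot{H}^{s}$ is insensitive to additive constants for $s>0$, I replace $F$ by $F-F(0)$ and assume $F(0)=0$; the case $s=0$ is immediate from the pointwise bound $|F(f)|\leq\|F'\|_{L^{\infty}(B_{V}(0,M))}|f|$. Because $M:=\|f\|_{L^{\infty}}$ is finite, only the values of $F$ on $B_{V}(0,2M)$ matter, so by truncation and convolution I may further assume $F\in C^{\infty}_{c}(V\to V)$ whose $C^{[s]+1}$ seminorm on $B_{V}(0,2M)$ is controlled by the original one. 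After this reduction only $\|F\|_{C^{[s]+1}(B_{V}(0,2M))}$ and $M$ should enter the final constant.

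Next, I telescope along the homogeneous Littlewood--Paley decomposition: with $f_{k}:=S_{<k}f$ one has $f_{k+1}-f_{k}=\Delta_{k}f$ and, using $F(0)=0$ together with $S_{<k}f\to 0$ as $k\to-\infty$ and $S_{<k}f\to f$ as $k\to+\infty$,
\begin{equation*}
F(f)=\sum_{k\in\mathbb{Z}}\bigl[F(f_{k+1})-F(f_{k})\bigr].
\end{equation*}
Expand each summand by Taylor's formula to order $[s]$ around $f_{k}$:
\begin{equation*}
F(f_{k+1})-F(f_{k})=\sum_{j=1}^{[s]}\tfrac{1}{j!}D^{j}F(f_{k})(\Delta_{k}f)^{j}+R_{k},
\end{equation*}
where
\begin{equation*}
R_{k}=\tfrac{1}{[s]!}\int_{0}^{1}(1-\theta)^{[s]}\bigl[D^{[s]+1}F(f_{k}+\theta\Delta_{k}f)-D^{[s]+1}F(f_{k})\bigr](\Delta_{k}f)^{[s]+1}\,d\theta
\end{equation*}
when $s\notin\mathbb{N}$ (if $s\in\mathbb{N}$ one stops the expansion one order earlier and the remainder involves $D^{[s]+1}F$ directly).

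To recover the $\dot{H}^{s}$ norm, apply $\Delta_{\ell}$ to this expansion and estimate $\|\Delta_{\ell}F(f)\|_{L^{2}}$ for each dyadic $\ell$. The linear piece $\sum_{k}DF(f_{k})\Delta_{k}f$ is an essentially exact paraproduct: spectral localization forces $|k-\ell|\leq C$, yielding $\|\Delta_{\ell}(\sum_{k}DF(f_{k})\Delta_{k}f)\|_{L^{2}}\leq C\|DF\|_{L^{\infty}}\|\Delta_{\ell}f\|_{L^{2}}$. For $j\geq 2$, iterated Bernstein inequalities applied to $(\Delta_{k}f)^{j}$ (which has frequency support in $|\xi|\leq C 2^{k}$) distribute the $j$ factors, and the chain-rule factor $D^{j}F(f_{k})$ is uniformly bounded, so that after multiplying by $2^{\ell s}$, squaring, and summing in $\ell$, Schur's test absorbs the convolution in $k$ against the sequence $\{2^{ks}\|\Delta_{k}f\|_{L^{2}}\}_{k}$. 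Altogether these polynomial terms yield a bound of the form $C_{F,M}\|f\|_{\dot{H}^{s}}$.

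The principal obstacle is the remainder $R_{k}$ when $s$ is not an integer. There one only has the modulus-of-continuity bound $|D^{[s]+1}F(f_{k}+\theta\Delta_{k}f)-D^{[s]+1}F(f_{k})|\leq \omega_{F}(|\Delta_{k}f|)$ with $\omega_{F}$ the uniform-continuity modulus of $D^{[s]+1}F$ on $B_{V}(0,2M)$, and $(\Delta_{k}f)^{[s]+1}$ is a product of $[s]+1$ factors each of frequency at most of order $2^{k}$. Formally this term has regularity $[s]+1+\{s\}-1=s$, but rigorously extracting the fractional $\{s\}$ derivative in $\dot{H}^{s}$ rather than in a Zygmund or H\"older space is the delicate step: one rewrites $(\Delta_{k}f)^{[s]+1}$ via further paraproduct expansions so that the factor carrying the H\"older gain is pulled to the ``top'', and a Schur-type argument then closes the estimate. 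This multilinear paradifferential rearrangement, developed by Tataru, is where the $C^{[s]+1}$ hypothesis is used sharply and where the dependence of the constant on $\|f\|_{L^{\infty}}$ through $\omega_{F}$ is absorbed.
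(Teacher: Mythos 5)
The paper itself gives no proof of this lemma: immediately after the statement it writes ``For the proof of Lemma \ref{Schauder}, we refer to Lemma A.9 in \cite{Tao3}'', whose argument is based on Tataru's multilinear paradifferential expansions. Your telescoping-plus-Taylor outline is therefore aimed at the right target, but as it stands it is an outline with a genuine gap rather than a proof. The decisive difficulty is exactly the one you postpone: for $k\ll\ell$ the factors $D^{j}F(S_{<k}f)$ are \emph{not} band-limited, so your claim that spectral localization forces $|k-\ell|\leq C$ in the paralinearized term $\sum_{k}DF(S_{<k}f)\Delta_{k}f$ is false. All one can use there is $\|\nabla^{m}D^{j}F(S_{<k}f)\|_{L^{\infty}}\leq C_{M}2^{km}$ for $m\leq[s]+1-j$, and after multiplying by the weight $2^{\ell s}$ this off-diagonal decay leaves a factor of size $2^{(\ell-k)(\{s\}+j-1)}$, which grows exponentially in $\ell-k$; hence the Schur-test argument you invoke does not close for the polynomial Taylor terms any more than for the remainder. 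In other words, the low-frequency-input/high-frequency-output interactions, where only $[s]+1-j$ derivatives of $D^{j}F$ remain, are the heart of the matter for \emph{every} term of the expansion, not just for $R_{k}$, and they require either the iterated paradifferential rearrangement of Tataru or an induction on $[s]$ via the chain rule; neither is carried out.

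Second, the treatment of $R_{k}$ — converting a mere modulus-of-continuity bound on $D^{[s]+1}F$ into a gain of $\{s\}$ derivatives measured in an $L^{2}$-based space — is precisely the content of the cited Lemma A.9, and your proposal defers it entirely to ``Tataru's rearrangement'' without executing a single estimate; this cannot be accepted as a proof of the statement being proved. There are also smaller technical points you should not gloss over: the telescoping identity needs $S_{<k}f\to0$ as $k\to-\infty$, which is not automatic for $f\in\dot{H}^{s}\cap L^{\infty}$ (it must be interpreted modulo constants or justified separately), and the reduction to $F\in C_{c}^{\infty}$ by mollification is only legitimate if every constant in the subsequent argument is uniform in the mollification parameter, i.e.\ depends only on $\|F\|_{C^{[s]+1}(B(0,2M))}$ and the modulus of continuity of $D^{[s]+1}F$ — a requirement you should state explicitly, since otherwise smoothing would trivialize the remainder estimate.
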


For the proof of Lemma \ref{Schauder}, we refer to Lemma A.9 in \cite{Tao3}.

Since $g_{1}\in C^{[s]+2}(\mathbb{C},\mathbb{C})$, we deduce from \eqref{eq4.2} and Lemma \ref{Schauder} that
\begin{equation}\label{eq4.8}
\|Dg_{1}([u_{n},u]_{\theta})\|_{\dot{H}^{s}}\leq C_{M,s,N}\|[u_{n},u]_{\theta}\|_{\dot{H}^{s}}\leq C.
\end{equation}
Note that $\|[u_{n},u]_{\theta}\|_{L^{\infty}}\leq CM$, so we have $\|Dg_{1}([u_{n},u]_{\theta})\|_{L^{\infty}}\leq C_{M}$, then it follows from \eqref{eq4.7} and \eqref{eq4.8} that
\begin{equation}
\|g_{1}(u_{n})-g_{1}(u)\|_{\dot{H}^{s}}\leq C\|u_{n}-u\|_{H^{s}}
\end{equation}
and hence, by applying H\"{o}lder's inequality on time, we get
\begin{equation}\label{eq4.9}
\|g_1(u_n)-g_1(u)\|_{L^{1}(I,H^{s})}\leq CT\|u_{n}-u\|_{L^{\infty}(I,H^{s})}.
\end{equation}

For the general case that $g_{1}$ is of class $\mathcal{C}(\alpha,s)$, by Lemma \ref{alternative}, we have
\begin{equation}\label{eq4.10}
\|f\|_{H^{s}}\sim\|f\|_{L^{2}}+\sum_{j=1}^{N}(\int_{0}^{\infty}(t^{-\{s\}}\sup_{|y|\leq t}\|\Delta_{y}\partial_{x_{j}}^{[s]}f\|_{L^{2}})^{2}\frac{dt}{t})^{1/2},
\end{equation}
where $\Delta_{y}f:=\tau_{y}f-f$ denotes the first order difference operator. Therefore, we can see that the key ingredient is how to estimate $\|\Delta_{y}\partial_{x_{j}}^{[s]}(g_{1}(u_{n})-g_{1}(u))\|_{L^{2}}$ for $j=1,\cdots,N$.

If $N=1$, $\frac{1}{2}<s<1$, since $g_{1}\in C^{1}(\mathbb{C},\mathbb{C})$, by \eqref{eq4.2}, the fundamental theorem of calculus and H\"{o}lder's estimate, we have
\begin{eqnarray}\label{eq4.11}
&&\|\Delta_{y}(g_{1}(u_{n})-g_{1}(u))\|_{L^{2}}\\
\nonumber&\leq&C\{\|\Delta_{y}(u_{n}-u)\int_{0}^{1}Dg_{1}([\tau_{y}u_{n},u_{n}]_{\theta})d\theta\|_{L^{2}}\\
\nonumber&&+\|\Delta_{y}u\int_{0}^{1}[Dg_{1}([\tau_{y}u_{n},u_{n}]_{\theta})-Dg_{1}([\tau_{y}u,u]_{\theta})]d\theta\|_{L^{2}}\}\\
\nonumber&\leq&C\{\|\Delta_{y}(u_{n}-u)\|_{L^{2}}+\varepsilon_{M}(\|u_{n}-u\|_{L^{\infty}})\|\Delta_{y}u\|_{L^{2}}\},
\end{eqnarray}
where $\tau_{y}F(x):=F(x-y)$ is the translation operator, $[v,u]_{\theta}:=\theta v+(1-\theta)u$ for $0\leq\theta\leq1$ and $\varepsilon_{M}(t)\rightarrow0$ as $t\downarrow0$.

Now we assume $s>\max\{1,N/2\}$ and let $\partial_{j}$ denotes the partial derivative $\partial_{x_{j}}$ for $j=1,\cdots,N$. By simple calculations, we get the following estimate for any $j=1,\cdots,N$,
\begin{eqnarray*}
&&\|\Delta_{y}\partial_{j}^{[s]}(g_{1}(u_{n})-g_{1}(u))\|_{L^{2}}\\
\nonumber&\leq&\sum_{k=1}^{[s]}\sum_{\sum_{1}^{k}\beta_i=[s],\beta_i\geq1}
\{\|\Delta_{y}(u_{n}-u)\int_{0}^{1}D^{k+1}g_{1}([\tau_{y}u_{n},u_{n}]_{\theta})d\theta\prod_{i=1}^{k}\partial_{j}^{\beta_{i}}\tau_{y}u\|_{L^{2}}\\
\nonumber&&+\|\Delta_{y}u\int_{0}^{1}[D^{k+1}g_{1}([\tau_{y}u_{n},u_{n}]_{\theta})-D^{k+1}g_{1}([\tau_{y}u,u]_{\theta})]d\theta
\prod_{i=1}^{k}\partial_{j}^{\beta_{i}}\tau_{y}u\|_{L^{2}}\\
\nonumber&&+\sum_{(v_{1},\cdots,v_{k})}\|(D^{k}g_{1}(\tau_{y}u_{n})-D^{k}g_{1}(u_{n}))\prod_{i=1}^{k}\partial_{j}^{\beta_{i}}\tau_{y}v_{i}\|_{L^{2}}\\
\nonumber&&+\sum_{(w_{1},\cdots,w_{k})}\|(D^{k}g_{1}(u_{n})-D^{k}g_{1}(u))\prod_{i=1}^{k}\partial_{j}^{\beta_{i}}w_{i}\|_{L^{2}}\\
\nonumber&&+\sum_{(\mu_{1},\cdots,\mu_{k})}\|D^{k}g_{1}(u)\prod_{i=1}^{k}\partial_{j}^{\beta_{i}}\mu_{i}\|_{L^{2}}
+\sum_{(\nu_{1},\cdots,\nu_{k})}\|D^{k}g_{1}(u)\prod_{i=1}^{k}\partial_{j}^{\beta_{i}}\nu_{i}\|_{L^{2}}\}\\
\nonumber&=:&H_{1}+H_{2}+H_{3}+H_{4}+H_{5}+H_{6},
\end{eqnarray*}
where $\tau_{y}F(x):=F(x-y)$ is the translation operator; all the $v_{i}$'s are equal to $u_{n}$ or $u$, except one which is equal to $u_{n}-u$; all the $w_{i}$'s are equal to $\tau_{y}u_{n}$ or $u_{n}$, except one which is equal to $\Delta_{y}u_{n}$; if $k=1$, $\mu_{1}=0$, if $2\leq k\leq[s]$, $\mu_{i}$'s are equal to $u_{n}$, $u$ or $\tau_{y}u_{n}$, $\tau_{y}u$, except two of them, one is equal to $\Delta_{y}u_{n}$, the other one is equal to $\tau_{y}(u_{n}-u)$ or $u_{n}-u$; all the $\nu_{i}$'s are equal to $\tau_{y}u$ or $u$, except one which is equal to $\Delta_{y}(u_{n}-u)$.

Since $g_{1}\in C^{[s]+1}(\mathbb{C},\mathbb{C})$, by \eqref{eq4.2}, H\"{o}lder's estimates and Sobolev embedding theorem, we get the following estimates for $H_{1}$-$H_{3}$:
\begin{eqnarray*}
H_{1}&\leq&C_{M}\|\Delta_{y}(u_{n}-u)\|_{L^{\frac{2s}{\{s\}}}}\sum_{k=1}^{[s]}\sum_{\sum_{i=1}^{k}\beta_i=[s],\beta_i\geq1}
\prod_{i=1}^{k}\|\partial_{j}^{\beta_{i}}u\|_{L^{2s/\beta_{i}}}\\
&\leq&C(\|\Delta_{y}(u_{n}-u)\|_{L^{2}}+\sum_{j=1}^{N}\|\Delta_{y}\partial_{j}^{[s]}(u_{n}-u)\|_{L^{2}});
\end{eqnarray*}
\begin{equation*}
H_{2}\leq C(\|\Delta_{y}u\|_{L^{2}}+\sum_{j=1}^{N}\|\Delta_{y}\partial_{j}^{[s]}u\|_{L^{2}})
(\|u_{n}-u\|_{H^{s}}+\varepsilon_{M}(\|u_{n}-u\|_{L^{\infty}}));
\end{equation*}
\begin{eqnarray*}
H_{3}&\leq&C_{M}\|\Delta_{y}u_{n}\|_{L^{\frac{2s}{\{s\}}}}\sum_{k=1}^{[s]}\sum_{\sum_{i=1}^{k}\beta_i=[s],\beta_i\geq1}
\sum_{(v_{1},\cdots,v_{k})}\prod_{i=1}^{k}\|\partial_{j}^{\beta_{i}}v_{i}\|_{L^{2s/\beta_{i}}}\\
&\leq&C(\|\Delta_{y}u_{n}\|_{L^{2}}+\sum_{j=1}^{N}\|\Delta_{y}\partial_{j}^{[s]}u_{n}\|_{L^{2}})\|u_{n}-u\|_{H^{s}};
\end{eqnarray*}
where $\varepsilon_{M}(t)\rightarrow0$ as $t\downarrow0$.

For $k\in\{1,\cdots,[s]\}$ and $\beta_{i}\geq1$($1\leq i\leq k$) such that $\sum_{i=1}^{k}\beta_{i}=[s]$, we will define indices $p_{i}$($1\leq i\leq k$) as following. If $[s]\geq\frac{N}{2}$, we let $p_{i}=\frac{2[s]}{\beta_{i}}$ for $1\leq i\leq k$; if $[s]<\frac{N}{2}$, let $p_{1}=\frac{2N}{N-2([s]-\beta_{1})}$ and $p_{i}=\frac{N}{\beta_{i}}$ for $2\leq i\leq k$.

For the estimate of $H_{4}$, without loss of generality, we may assume $w_{1}=\Delta_{y}u_{n}$, then we deduce from \eqref{eq4.2}, H\"{o}lder's estimates and Sobolev embedding that
\begin{eqnarray*}
H_{4}&\leq&C\|u_{n}-u\|_{L^{\infty}}\sum_{k=1}^{[s]}\sum_{\sum_{1}^{k}\beta_i=[s],\beta_i\geq1}
\sum_{(w_{1},\cdots,w_{k})}\|\partial_{j}^{\beta_{1}}\Delta_{y}u_{n}\|_{L^{p_{1}}}\prod_{i=2}^{k}\|\partial_{j}^{\beta_{i}}w_{i}\|_{L^{p_{i}}}\\
&\leq&C(\|\Delta_{y}u_{n}\|_{L^{2}}+\sum_{j=1}^{N}\|\Delta_{y}\partial_{j}^{[s]}u_{n}\|_{L^{2}})\|u_{n}-u\|_{H^{s}}.
\end{eqnarray*}

For $H_{5}$, we know if $k=1$, $\mu_{1}=0$; if $2\leq k\leq[s]$, we may assume without loss of generality that $\mu_{1}=\Delta_{y}u_{n}$ and $\mu_{2}$ is equal to $\tau_{y}(u_{n}-u)$ or $u_{n}-u$. By \eqref{eq4.2}, H\"{o}lder's estimates and Sobolev embedding, we have
\begin{eqnarray*}
H_{5}&\leq&C\sum_{k=2}^{[s]}\sum_{\sum_{i=1}^{k}\beta_i=[s],\beta_i\geq1}
\sum_{(\mu_{1},\cdots,\mu_{k})}\|\partial_{j}^{\beta_{1}}\Delta_{y}u_{n}\|_{L^{p_{1}}}\prod_{i=2}^{k}\|\partial_{j}^{\beta_{i}}\mu_{i}\|_{L^{p_{i}}}\\
&\leq&C(\|\Delta_{y}u_{n}\|_{L^{2}}+\sum_{j=1}^{N}\|\Delta_{y}\partial_{j}^{[s]}u_{n}\|_{L^{2}})\|u_{n}-u\|_{H^{s}}.
\end{eqnarray*}

As to $H_{6}$, similarly, we may assume without loss of generality that $\nu_{1}=\Delta_{y}(u_{n}-u)$, then we deduce from \eqref{eq4.2}, H\"{o}lder's estimates and Sobolev embedding that
\begin{eqnarray*}
H_{6}&\leq&C\sum_{k=1}^{[s]}\sum_{\sum_{i=1}^{k}\beta_i=[s],\beta_i\geq1}
\sum_{(\nu_{1},\cdots,\nu_{k})}\|\partial_{j}^{\beta_{1}}\Delta_{y}(u_{n}-u)\|_{L^{p_{1}}}\prod_{i=2}^{k}\|\partial_{j}^{\beta_{i}}\nu_{i}\|_{L^{p_{i}}}\\
&\leq&C(\|\Delta_{y}(u_{n}-u)\|_{L^{2}}+\sum_{j=1}^{N}\|\Delta_{y}\partial_{j}^{[s]}(u_{n}-u)\|_{L^{2}}).
\end{eqnarray*}

Combining the estimates of $H_{1}$-$H_{6}$, we get the estimate of $\|\Delta_{y}\partial_{x_{j}}^{[s]}(g_{1}(u_{n})-g_{1}(u))\|_{L^{2}}$ for $s>\max\{1,N/2\}$, that is, for any $j=1,\cdots,N$,
\begin{equation}\label{eq4.12}
\begin{array}{ll}
\|\Delta_{y}\partial_{x_{j}}^{[s]}(g_{1}(u_{n})-g_{1}(u))\|_{L^{2}}\leq C\{(\|\Delta_{y}(u_{n}-u)\|_{L^{2}}&\\
\qquad+\sum_{j=1}^{N}\|\Delta_{y}\partial_{x_{j}}^{[s]}(u_{n}-u)\|_{L^{2}})+[(\|\Delta_{y}u\|_{L^{2}}+\sum_{j=1}^{N}\|\Delta_{y}\partial_{x_{j}}^{[s]}u\|_{L^{2}})&\\
\qquad\qquad+(\|\Delta_{y}u_{n}\|_{L^{2}}+\sum_{j=1}^{N}\|\Delta_{y}\partial_{x_{j}}^{[s]}u_{n}\|_{L^{2}})]\|u_{n}-u\|_{H^{s}}&\\
\qquad\qquad\qquad+(\|\Delta_{y}u\|_{L^{2}}+\sum_{j=1}^{N}\|\Delta_{y}\partial_{x_{j}}^{[s]}u\|_{L^{2}})\varepsilon_{M}(\|u_{n}-u\|_{L^{\infty}})\},&
\end{array}
\end{equation}
where $\varepsilon_{M}(t)\rightarrow0$ as $t\downarrow0$.

Therefore, if $s>N/2$ is not an integer and the nonlinearity $g_{1}$ is of class $\mathcal{C}(\alpha,s)$, by inserting \eqref{eq4.11} and \eqref{eq4.12} into \eqref{eq4.10} and applying H\"{o}lder's inequality on time, we deduce from the convergence of $L^{\infty}$ norm that
\begin{equation}\label{eq4.13}
\|g_1(u_n)-g_1(u)\|_{L^{1}(I,H^{s})}\leq\varepsilon_{n}+CT\|u_{n}-u\|_{L^{\infty}(I,H^{s})},
\end{equation}
where $\varepsilon_{n}\rightarrow0$ as $n\rightarrow\infty$.

From \eqref{eq4.11} and the estimate of $H_{2}$, we can easily observe that if we assume further $g_{1}\in C^{[s]+2}(\mathbb{C},\mathbb{C})$, then $D^{[s]+1}g_{1}$ is Lipschitz on bounded sets, thus a Lipschitz type estimate \eqref{eq4.9} will hold, this gives another way to show \eqref{eq4.9}, which doesn't rely on the Schauder estimate(Lemma \ref{Schauder}).

By inserting \eqref{eq4.4}, \eqref{eq4.5}, \eqref{eq4.9} and \eqref{eq4.13} into the original Strichartz's estimate \eqref{eq4.1}, we have the following two cases:\\
Case (i)\, if $g_{1}\in C^{[s]+2}(\mathbb{C},\mathbb{C})$, then
\begin{equation}\label{eq4.14}
Z_n\leq C\|\varphi_n-\varphi\|_{H^s}+CTZ_{n};
\end{equation}
Case (ii) otherwise, we have
\begin{equation}\label{eq4.15}
Z_n\leq C\|\varphi_n-\varphi\|_{H^s}+CTZ_{n}+\varepsilon_{n},
\end{equation}
where $\varepsilon_{n}\rightarrow0$ as $n\rightarrow\infty$.

For case (i), by choosing T sufficiently small such that $CT\leq 1/2$, we get
\[Z_{n}\leq C\|\varphi_n-\varphi\|_{H^s},\]
hence $Z_{n}=\|u_{n}-u\|_{L^\infty(I,H^s)}\rightarrow 0$ as $n\rightarrow\infty$, and the solution flow is locally Lipshcitz in $H^{s}(\mathbb{R}^{N})$.

As to case (ii), by choosing T sufficiently small such that $CT\leq 1/2$, we infer from \eqref{eq4.15} that
\[Z_n\leq C\|\varphi_n-\varphi\|_{H^s}+\varepsilon_{n},\]
thus $\varepsilon_{n}\rightarrow0$ as $n\rightarrow\infty$ yields the desired convergence.

Thus we have proved $Z_{n}\rightarrow 0$ as $n\rightarrow \infty$ if $T$ is sufficiently small in both case (i) and (ii). The convergence for arbitrary admissible pair $(q,r)$ follows from Strichartz's estimates. The conclusions $(i)$ and $(ii)$ of Theorem \ref{th1} follow by iterating this property to cover any compact subset of $(-T_{min},T_{max})$. Moreover, if we assume further $g_{1}\in C^{[s]+2}(\mathbb{C},\mathbb{C})$, then the continuous dependence is locally Lipschitz.\\

This completes the proof of Theorem \ref{th1}.\\

{\bf Acknowledgements:} The authors would like to thank professor Daoyuan Fang and Daniel Tataru for their valuable comment. D. Cao was partially supported by Science Fund for Creative Research Groups of NSFC(No. 10721101). \\


\begin{thebibliography}{99}

\bibitem{b8} J. Bergh and J. L\"{o}fst\"{o}m, {\it Interpolation Spaces}, Springer, New York, 1976.

\bibitem{b23} J. M. Bony, {\it Calcul symbolique et propagation des singularit\'{e}s pour les \'{e}quations aux
d\'{e}riv\'{e}es partielles non lin\'{e}aires}, Ann. Sci. \'{E}cole Norm. Sup.(4), \textbf{14(2)}(1981), 209-246.

\bibitem{b2} T. Cazenave and F. B. Weissler, {\it The Cauchy problem for the nonlinear Schr\"{o}dinger equation in $H^{1}$}, Manuscripta Math., \textbf{61}(1988), 477-494.

\bibitem{b5} T. Cazenave and F. B. Weissler, {\it The Cauchy problem for the critical nonlinear Schr\"{o}dinger equation in $H^{s}$}, Nonlinear Analysis, TMA, \textbf{14}(1990), no. 10, 807-836.

\bibitem{b6} T. Cazenave, {\it Semilinear Schr\"{o}dinger equations}, Courant Lecture Notes in Mathematics, \textbf{10}, New York University, Courant Institute of Mathematical Sciences, New York, American Mathematical Society, Providence, RI, 2003.

\bibitem{CFZ} T. Cazenave, D. Fang and Z. Han, {\it Continuous dependence for NLS in fractional order spaces}, Ann. Inst. H. Poincar\'{e} Analyse Non Lin\'{e}aire, \textbf{28}(2011), 135-147.

\bibitem{b22} R. Danchin, {\it Fourier Analysis Methods for PDE's}, Lecture Notes, November 14, 2005.

\bibitem{F1} G. Furioli and E. Terraneo, {\it Besov spaces and unconditional well-posedness for the nonlinear Schr\"{o}dinger equation}, Commun. Contemp. Math., \textbf{5}(2003), no. 3, 349-367.

\bibitem{b3} J. Ginibre and G. Velo, {\it The global Cauchy problem for the nonlinear Schr\"{o}dinger equation revisited}, Ann. Inst. H. Poincar\'{e} Analyse Non Lin\'{e}aire, \textbf{2}(1985), 309-327.

\bibitem{b9} J. Ginibre and G. Velo, {\it The global Cauchy problem for the nonlinear Klein-Gordon equation}, Math. Z., \textbf{189}(1985), 487-505.

\bibitem{G} J. Ginibre and G. Velo, {\it Scattering theory in the energy space for a class of non-linear wave equations}, Commun. Math. Phys., \textbf{123}(1989), 535-573.

\bibitem{b13} C. E. Kenig, G. Ponce and L. Vega, {\it On the (generalized) Korteweg-de Vries equation}, Duke Math. J., \textbf{59}(1989), 585-610.

\bibitem{Ponce4} C. E. Kenig, G. Ponce and L. Vega, {\it Small solutions to nonlinear Schr\"{o}dinger equations}, Ann. Inst. H. Poincar\'{e} Anal. Non Lin\'{e}aire, \textbf{10}(1993), 255-288.

\bibitem{K-M} C. E. Kenig and F. Merle, {\it Global well-posedness, scattering and blow-up for the energy-critical, focusing, non-linear Schr\"{o}dinger equation in the radial case}, Invent. Math., \textbf{166}(2006), no. 3, 645-675.

\bibitem{Tao1} M. Keel and T. Tao, {\it Endpoint Strichartz inequalities}, Amer. J. Math., \textbf{120}(1998), 955-980.

\bibitem{b1} T. Kato, {\it On nonlinear Schr\"{o}dinger equations}, Ann. Inst. H. Poincar\'{e} Physique Th\'{e}orique, \textbf{46}(1987), 113-129.

\bibitem{b20} T. Kato and G. Ponce, {\it Commutator estimates and the Euler and Navier-Stokes equations}, Comm. Pure Appl. Math., \textbf{41}(1988), no. 7, 891-907.

\bibitem{b7} T. Kato, {\it On nonlinear Schr\"{o}dinger equations, II. $H^{s}$-solutions and unconditional well-posedness}, J. Anal. Math., \textbf{67}(1995), 281-306.

\bibitem{b21} C. Muscalu, J. Pipher, T. Tao and C. Thiele, {\it Bi-parameter paraproducts}, Acta Math., \textbf{193}(2004), no. 2, 269-296.

\bibitem{N1} K. Nakanishi and T. Ozawa, {\it Remarks on scattering for nonlinear Schr\"{o}dinger equations}, Nonlinear Differ. Equ. Appl., \textbf{9}(2002), no. 1, 45-68.

\bibitem{N2} M. Nakamura and T. Ozawa, {\it Low energy scattering for nonlinear Schr\"{o}dinger equations in fractional order Sobolev spaces}, Rev. Math. Phys., \textbf{9}(1997), no. 3, 397-410.

\bibitem{b17} M. Nakamura and T. Ozawa, {\it Nonlinear Schr\"{o}dinger equations in the Sobolev space of critical order}, Journal of Functional Analysis, \textbf{155}(1998), 364-380.

\bibitem{b16} M. Nakamura and T. Ozawa, {\it Small solutions to nonlinear Schr\"{o}dinger equation in the Sobolev spaces}, Journal D'Analysis Math\'{e}matique, \textbf{81}(2000), 305-329.

\bibitem{O} M. Ohta and G. Todorova, {\it Remarks on global existence and blow-up for damped nonlinear Schr\"{o}dinger equations}, Discrete Contin. Dyn. Syst., \textbf{23}(2009), no. 4, 1313-1325.

\bibitem{P} H. Pecher, {\it Solutions of semilinear Schr\"{o}dinger equations in $H^s$}, Ann. Inst. H. Poincar\'{e} Physique Th\'{e}orique, \textbf{67}(1997), 259-296.

\bibitem{b10} E. M. Stein , {\it Singular Integrals and Differentiability Properties of Functions}, Princeton University Press, Princeton(1970).

\bibitem{b14} E. M. Stein, {\it Harmonic Analysis: Real-Variable Methods, Orthogonality, and Oscillatory Integrals}, Princeton University Press, Princeton, New Jersey, 1993.

\bibitem{Tataru} D. Tataru, {\it Rough solutions for the wave maps equation}, Amer. J. Math., \textbf{127}(2005), 293-377.

\bibitem{b11} H. Triebel, {\it Theory of Function Spaces}, Birkh\"{a}user. Basel, 1983.

\bibitem{b12} T. Tao, {\it Lecture Notes on Harmonic Analysis}, UCLA, Department of Mathematics, Los Angeles.

\bibitem{Tao2} T. Tao and M. Visan, {\it Stability of energy-critical nonlinear Schr\"{o}dinger equations in high dimensions}, Electron. J. Diff. Eqns., \textbf{2005}(2005), no. 118, 1-28.

\bibitem{Tao3} T. Tao, {\it Nonlinear dispersive equations: local and global analysis}, CBMS Regional Conf. Series in Math., \textbf{106}, American Mathematical Society, Providence, RI, 2006.

\bibitem{b4} Y. Tsutsumi, {\it $L^{2}$-solutions for nonlinear Schr\"{o}dinger equations and nonlinear groups},
Funkcialaj Ekvacioj, \textbf{30}(1987), 115-125.

\end{thebibliography}
\end{document}